\newcommand{\psh}[2]{\ensuremath{\langle #1 \, , #2 \rangle}\xspace}
\newcommand{\sumlim}[2]{\sum_{#1}^{#2}\xspace}
\newcommand{\itg}[4]{\ensuremath{\int_{#1}^{#2} #3 \, \mathrm{d}#4 }\xspace}
\newcommand{\R}{\mathbb R}
\newcommand{\N}{\mathbb N}
\newcommand{\Z}{\mathbb Z}
\newcommand{\C}{\mathbb C}
\newcommand{\br}{{\bf r}}
\newcommand{\cO}{{\mathcal O}}
\newcommand{\cE}{{\mathcal E}}
\begin{document}
\numberwithin{equation}{section}
\newtheorem{theo}{Theorem}[section]
\newtheorem{prop}[theo]{Proposition}
\newtheorem{note}[theo]{Remark}
\newtheorem{lem}[theo]{Lemma}
\newtheorem{cor}[theo]{Corollary}
\newtheorem{definition}[theo]{Definition}

\title{Variational projector augmented-wave method: theoretical analysis and preliminary numerical results}
\author[1]{X. Blanc}
\author[2]{E. Canc\`es}
\author[1]{M.-S. Dupuy}
\affil[1]{Univ. Paris Diderot, Sorbonne Paris Cité, Laboratoire Jacques-Louis Lions, UMR 7598, UPMC, CNRS, F-75205 Paris, France}
\affil[2]{CERMICS - Ecole des Ponts ParisTech, 6 \& 8 avenue Blaise Pascal, Cit\'e Descartes, 77455 Marne la Vall\'ee Cedex 2, France}

\renewcommand\Affilfont{\itshape\small}

\maketitle
\begin{abstract}
\begin{small}
In Kohn-Sham electronic structure computations, wave functions have singularities at nuclear positions. Because of these
singularities, plane-wave expansions give a poor approximation of the eigenfunctions. In conjunction with the use of
pseudo-potentials, the PAW (projector augmented-wave) method circumvents this issue by replacing the original eigenvalue problem by a new one with the same eigenvalues
but smoother eigenvectors. Here a slightly different method, called VPAW
(variational PAW), is proposed and analyzed. This new method allows for a better convergence with respect to the number of plane-waves. Some
numerical tests on an idealized case corroborate this efficiency. This work has been recently announced in \cite{blanc2017}. 
\end{small}
\end{abstract}

\section*{Introduction}


Solving the $N$-body electronic problem is numerically impossible even for small molecular systems. Various nonlinear one-body approximations of this problem have been proposed, among which Hartree-Fock theory, post-Hartree-Fock methods and density functional theory, which describe fairly well the ground-state electronic structure of many molecules. However it can still be expensive to compute the desired properties with these approximations. In solid-state physics, plane-wave methods are often the method of choice to compute the lowest eigenvalues of Kohn-Sham operators. However Coulomb potentials considerably impedes the rate of convergence of plane-wave expansion because of the cusps \cite{kato1957eigenfunctions, hoffmann2001electron, fournais2005sharp} located at each nucleus. Over the years, several strategies have been developped to tackle this problem. 

In most situations, the knowledge of the whole energy spectrum of the Kohn-Sham Hamiltonian is not needed. Only eigenvalues belonging to a small
energy range are. Indeed, it is well-known that the chemical properties mostly come from the valence electrons. So
it would be satisfactory to replace the Coulomb potential and nonlinear interactions with the core
electrons by a smooth potential that reproduces the exact spectrum in the relevant range. This is the main idea
behind pseudopotential methods. Specifically, pseudopotentials are designed to match the eigenvalues of the
original atomic model in a fixed energy range. So when used in molecular or solid-state simulations, it seems
reasonable to hope that they will accurately approximate the sought eigenvalues. Pseudopotentials can also be
introduced to take into account relativistic effects at lower cost \cite{dolg2011relativistic}. Thus a wide range
of pseudopotentials with different properties have been developped, among which, Troullier-Martins
\cite{troullier1991efficient} and Kleinman-Bylander \cite{kleinman1982efficacious} norm conserving
pseudopotentials, Vanderbilt \cite{vanderbilt90} ultrasoft pseudopotentials and Goedecker
\cite{goedecker1996separable} pseudopotentials. A mathematical study of the generation of optimal norm-conserving
pseudopotentials for the reduced Hartree-Fock theory and the local density approximation to the density-functional theory
has already been achieved \cite{cances2016existence}. So far as we know, it is the first mathematical result on pseudopotentials. 

Another strategy is to use a better suited basis set. This is the spirit of the augmented plane-wave (APW) method \cite{singh2006planewaves, koelling1975use}. The APW basis functions are discontinuous: inside balls centered at each nucleus, a basis function reproduces the cusp of atomic electronic wave functions and outside these balls, it is a plane-wave. One thus tries to get the best of both worlds: having the singularity behavior encoded in the basis functions and at the same time, having the plane-wave convergence property outside the singularity region. This method can be viewed as a discontinuous Galerkin method and its mathematical analysis has been carried out in \cite{chen2015numerical}.

The projector augmented-wave (PAW) \cite{blochl94} method relies on the same idea as the APW method but here,
instead of using another set of basis functions, the eigenvalue problem is modified by an invertible transformation
which carries the cusp behavior and/or fast oscillations in the vicinities of the nuclei. Specifically, the operator acts locally in a
ball around each nucleus and maps atomic wave functions to smooth functions called pseudo wave functions. The form
of the transformation is compatible with slowly varying pseudopotentials in the Hamiltonian. Doing so, one replaces the Coulomb
potential by a smooth potential without changing the spectrum of the original operator. The solution of the
corresponding \emph{generalized} eigenvalue problem can then be advantageously expanded in plane-waves. Because of
the one-to-one correspondance between the pseudo and the actual wave functions and its efficiency to produce
accurate results, the PAW method has become a very popular tool and has been implemented in several popular electronic structure
simulation codes (AbInit \cite{torrent2008337}, VASP \cite{kresse1996efficient,kresse99}). 

A crucial assumption in the PAW method is the completeness of the basis of atomic wave functions used to build the
PAW transformation. In practice, infinite expansions appearing in the generalized eigenvalue problem are truncated,
introducing an error which is rarely quantified. In the variational PAW method (which will be referred to as VPAW
in the following), a finite number of functions is used right from the beginning, avoiding this truncation
error. Although pseudopotentials can no longer be incorporated, an acceleration of convergence is obtained. This acceleration can be precisely characterized in the case of the double Dirac potential in a one-dimensional periodic setting.
 

Before moving to the 1D model we analyzed, we will briefly introduce both PAW and VPAW methods applied to a 3D Hamiltonian and explain the major differences between both approaches. More detailed expositions of the PAW method in various settings can be found in \cite{audouze2006projector, jollet2014generation, rostgaard2009projector}. 
\\
\section{PAW vs VPAW methods}
\label{sec:vari-paw-meth}

\subsection{General setting}

The general setting of the VPAW method for finite molecular systems has been presented in \cite{blanc2017}. In this paper, we will focus on the derivation of the VPAW equations in the periodic setting. For simplicity, we restrict ourselves to a linear model. 

The crystal is modelled as an infinite periodic motif of $N_{at}$ point charges at positions $\mathbf{R}_I$ in the unit cell 
$$
\Gamma = \left\{\alpha_1 \mathbf{a}_1 + \alpha_2 \mathbf{a}_2 + \alpha_3 \mathbf{a}_3, (\alpha_1, \alpha_2, \alpha_3) \in [-1/2, 1/2)^3 \right\},
$$
 and repeated over the periodic lattice 
$$
\mathcal{R} = \Z \mathbf{a}_1 + \Z  \mathbf{a}_2 + \Z  \mathbf{a}_3,
$$
where $\mathbf{a}_1,\mathbf{a}_2,\mathbf{a}_3$ are linearly independent vectors of $\R^3$.  

In the linear model under consideration, the electronic properties of the crystal are encoded in the spectral properties of the periodic Hamiltonian $H_\mathrm{per}$ acting on $L^2(\R^3)$:
$$
H_\mathrm{per} = -\frac{1}{2} \Delta + V_\mathrm{per} + W_\mathrm{per} ,
$$
where $V_\mathrm{per}$ is the $\mathcal{R}$-periodic potential defined (up to an irrelevant addition constant) by 
\begin{equation}
\label{eq:Vper}
\begin{cases}
-\Delta V_\mathrm{per} = 4\pi \left( \displaystyle{\sum_{\mathbf{T} \in \mathcal{R}}} \displaystyle{\sum_{I=1}^{N_{at}}} Z_I \left(\delta_{\mathbf{R}_I} (\cdot + \mathbf{T}) - \frac{1}{|\Gamma|}\right) \right) \\
V_\mathrm{per} \text{ is } \mathcal{R}\text{-periodic}.
\end{cases}
\end{equation}
For simplicity, $W_\mathrm{per}$ is a regular enough $\mathcal{R}\text{-periodic}$ potential. In practice, $W_\mathrm{per}$ is a nonlinear potential depending on the model chosen to describe the Hartree and exchange-correlation terms (typically a Kohn-Sham LDA potential). 
\newline

The standard way to study the spectral properties of $H_\mathrm{per}$ is through Bloch theory which will be outlined in the next few lines. Let $\mathcal{R}^*$ be the dual lattice 
$$
\mathcal{R}^* = \Z \mathbf{a}_1^* + \Z \mathbf{a}_2^* + \Z \mathbf{a}_3^*, 
$$
where  $(\mathbf{a}_1^*, \mathbf{a}_2^*,\mathbf{a}_3^*)$ satisfies $\mathbf{a}_i \cdot \mathbf{a}_j^* = 2\pi \delta_{ij}$. The reciprocal unit cell is defined by 
$$
\Gamma^* = \left\{\alpha_1 \mathbf{a}_1^* + \alpha_2 \mathbf{a}_2^* + \alpha_3 \mathbf{a}_3^*, (\alpha_1, \alpha_2, \alpha_3) \in [-1/2, 1/2)^3 \right\}.
$$
As $H_\mathrm{per}$ commutes with $\mathcal{R}$-translations, $H_\mathrm{per}$ admits a Bloch decomposition in operators $H_\mathbf{q}$ acting on 
$$
L^2_\mathrm{per}(\Gamma) = \{ f \in L^2_\mathrm{loc}(\R^3) \ | \ f \ \text{is } \mathcal{R}\text{-periodic}\},
$$ 
with domain 
$$
H^2_\mathrm{per}(\Gamma) = \{ f \in H^2_\mathrm{loc}(\R^3) \ | \ f \ \text{is } \mathcal{R}\text{-periodic}\}.
$$
The operator $H_\mathbf{q}$ is given by:
$$
H_\mathbf{q} = \frac{1}{2} |-i\nabla + \mathbf{q}|^2 + V_\mathrm{per} + W_\mathrm{per}, \quad \mathbf{q} \in \Gamma^*.
$$
For each $\mathbf{q} \in \Gamma^*$, the operator $H_\mathbf{q}$ is self-adjoint, bounded below and with compact resolvent. It thus has a discrete spectrum. Denoting by $E_{1,\mathbf{q}} \leq E_{2,\mathbf{q}} \leq \dots,$ with $E_{n, \mathbf{q}} \underset{n\to+\infty}{\longrightarrow} +\infty$, its eigenvalues counted with multiplicities, there exists an orthonormal basis of $L^2_\mathrm{per}(\Gamma)$ consisting of eigenfunctions $(\psi_{n, \mathbf{q}})_{n \in \N^*}$ 
\begin{equation}
\label{eq:H_per}
H_\mathbf{q} \psi_{n,\mathbf{q}} = E_{n,\mathbf{q}} \psi_{n,\mathbf{q}}.
\end{equation}
The spectrum of $H_\mathrm{per}$ is purely continuous and can be recovered from the discrete spectra of all the operators $H_\mathbf{q}$, $\mathbf{q} \in \Gamma^*$
$$
\sigma (H_\mathrm{per}) = \bigcup_{\mathbf{q} \in \Gamma^*} \sigma ( H_\mathbf{q}). 
$$

The PAW and VPAW methods aim to ease the numerical approximation of the eigenvalue problem \eqref{eq:H_per}. For clarity, we will only present the case $\mathbf{q}=0$ and denote $H_0$ by $H$ as this special case encloses all the main difficulties encountered in numerically solving \eqref{eq:H_per}. Transposition to $\mathbf{q} \not= 0$ can be done without problem.

\subsection{The VPAW method for solids}

Following the idea of the PAW method, an invertible transformation $(\mathrm{Id}+T)$ is applied to the eigenvalue problem \eqref{eq:H_per}, where $T$ is the sum of operators $T_I$, each $T_I$ acting locally around nucleus $I$. For each operator $T_I$, two parameters $N_I$ and $r_c$ need to be fixed:
\begin{enumerate}
\item the number $N_I$ of PAW functions used to build $T_I$,
\item a cut-off $r_c$ radius which will set the acting domain of $T_I$, more precisely: 
\begin{itemize}
\item for all $f \in L_\mathrm{per}^2(\Gamma)$, $\mathrm{supp}(T_If) \subset \displaystyle{\bigcup_{\mathbf{T} \in \mathcal{R}}} \overline{B}(\mathbf{R}_I + \mathbf{T},r_c)$, where $\overline{B}(\mathbf{R},r)$ is the closed ball of $\R^3$ with center $\mathbf{R}$ and radius $r$,
\item if $\mathrm{supp}(f) \bigcap \bigcup\limits_{\mathbf{T} \in \mathcal{R}} \overline{B}(\mathbf{R}_I + \mathbf{T},r_c) = \emptyset$, then $T_If = 0$.
\end{itemize} 
\end{enumerate}
The cut-off radius $r_c$ must be chosen small enough to avoid pairwise overlaps of the balls $(\overline{B}(\mathbf{R}_I + \mathbf{T}, r_c))_{1 \leq I \leq N, \mathbf{T} \in \mathcal{R}}$. 

The operator $T_I$ is given by:
\begin{equation}
\label{eq:T_0}
T_I = \sum_{k=1}^{N_I} (\phi^I_{k}(\mathbf{r} - \mathbf{R}_I) - \widetilde{\phi}^I_{k}(\mathbf{r} - \mathbf{R}_I)) \psh{\tilde{p}^I_{k}(\cdot - \mathbf{R}_I)}{\bullet},
\end{equation}
where $\psh{\bullet}{\bullet}$ is the usual $L^2_\mathrm{per}$-scalar product and the functions $\phi^I_{k}$, $\widetilde{\phi}^I_k$ and $\tilde{p}^I_k$ are functions in $L^2_\mathrm{per}(\Gamma)$. 

These functions, which will be referred to as the PAW functions in the sequel, are chosen as follows:
\begin{enumerate}
\item first, let $(\varphi^I_{k})_{1 \leq k \leq N_I} \in (L^2(\R^3))^{N_I}$ be eigenfunctions of an atomic \emph{non-periodic} Hamiltonian 
$$
H_I \varphi^I_k = \epsilon_k \varphi^I_k , \quad \epsilon^I_1 \leq \epsilon^I_2 \leq \epsilon^I_3 \leq \dots, \quad \int_{\mathbb{R}^3} \varphi^I_k \varphi^I_{k'} = \delta_{kk'},
$$
with $H_I$ defined by
$$
H_I = -\frac{1}{2} \Delta - \frac{Z_I}{|\mathbf{r}|} + W(|\mathbf{r}|),
$$
where $W$ is a regular enough bounded potential. The operator $H_I$ is
self-adjoint on $L^2(\R^3)$ with domain $H^2(\R^3)$. Again, in practice, $W$ is a radial nonlinear potential belonging to the same family of models as $W_\mathrm{per}$ in Equation \eqref{eq:Vper}. 
The PAW atomic wave functions $(\phi^I_{k})_{1 \leq k \leq N_I} \in (L^2_\mathrm{per}(\Gamma))^{N_I}$ satisfy:
\begin{itemize}
\item for $1 \leq k \leq N_I$ and $\mathbf{r} \in \Gamma$, $\phi^I_k(\mathbf{r}) = \varphi^I_k(\mathbf{r})$, 
\item $\phi^I_k$ is $\mathcal{R}$-periodic;
\end{itemize}
\item the pseudo wave functions $(\widetilde{\phi}^I_{k})_{1 \leq k \leq N_I}$, with $N_I \leq Z_I$, are determined by the next conditions:
\begin{enumerate}
\item inside the unit cell $\Gamma$, $\widetilde{\phi}^I_{k}$ is smooth and matches $\phi^I_{k}$ and several of its derivatives on the sphere $\{|\mathbf{r}|=r_c\}$,
\item for $\mathbf{r} \in \R^3 \setminus \bigcup\limits_{\mathbf{T} \in \mathcal{R}} \overline{B}(\mathbf{T}, r_c) $, $\widetilde{\phi}^I_{k}(\mathbf{r}) = \phi^I_{k}(\mathbf{r})$;
\end{enumerate}
\item the projector functions $(\tilde{p}^I_{k})_{1 \leq k \leq N_I}$ are defined such that:
\begin{enumerate}
\item each projector function $\tilde{p}^I_{k}$ is supported in $\bigcup\limits_{\mathbf{T} \in \mathcal{R}} \overline{B}(\mathbf{T},r_c)$,
\item they form a dual family to the pseudo wave functions $(\widetilde{\phi}^I_{k})_{1 \leq k \leq N_I}$: $\psh{\tilde{p}^I_{k}}{\widetilde{\phi}^I_{k'}} = \delta_{kk'}$.
\end{enumerate}
\end{enumerate}
By our choice of the pseudo wave functions $\widetilde{\phi}^I_k$ and the projectors $\tilde{p}^I_k$, $T_I$ acts in \linebreak $\bigcup\limits_{\mathbf{T} \in \mathcal{R}} \overline{B}(\mathbf{R}_I + \mathbf{T},r_c)$. 
\newline

The VPAW equations to solve are then:
\begin{equation}
\label{eq:VPAW_eig_pb}
\widetilde{H} \tilde{\psi} = E \widetilde{S} \tilde{\psi},
\end{equation}
where 
\begin{equation}
\label{eq:H_VPAW}
\widetilde{H} = (\mathrm{Id}+T)^* H (\mathrm{Id}+T), \quad \widetilde{S} = (\mathrm{Id}+T)^*(\mathrm{Id}+T),
\end{equation}
and 
$$
T = \sum_{I=1}^{N_{at}} T_I.
$$
Thus if $(\mathrm{Id}+T)$ is invertible, it is easy to recover the eigenfunctions of $H$ by the formula
\begin{equation}
\label{eq:I+Tpsi}
\psi = (\mathrm{Id}+T) \tilde{\psi},
\end{equation}
and the eigenvalues of $H$ coincide with the generalized eigenvalues of \eqref{eq:VPAW_eig_pb}. 

By construction, the operator $(\mathrm{Id}+T_I)$ maps the pseudo wave functions $\tilde{\phi}$ to the atomic eigenfunctions $\phi$:
$$
(\mathrm{Id}+T_I)\tilde{\phi}^I_{k} = \phi^I _{k},
$$
so if locally around each nucleus, the function $\psi$ "behaves" like the atomic wave functions $\phi_{k}$, we can expect that the cusp behavior of $\psi$ is captured by the operator $T$, thus $\tilde{\psi}$ is smoother than $\psi$ and the plane-wave expansion of $\tilde{\psi}$ converges faster than the expansion of $\psi$. \\

\subsection{Differences with the PAW method}
\label{sec:HPAW-pseudo}

The PAW equations solved in materials science simulation packages are different from the VPAW equations \eqref{eq:VPAW_eig_pb}. As in \cite{blochl94}, the construction of $T_I$ involves "complete" infinite sets of functions $\phi^I_k$, $\widetilde{\phi}^I_k$ and $\tilde{p}^I_k$ in the sense that for a function $f \in L^2_\mathrm{per}(\Gamma)$ supported in the balls $\bigcup\limits_{\mathbf{T} \in \mathcal{R}} \overline{B}(\mathbf{R}_I + \mathbf{T},r_c)$, we have :
\begin{equation}
\label{eq:PAW_completeness}
\sumlim{k=1}{\infty} \psh{\tilde{p}^I_k}{f} \widetilde{\phi}^I_k(x) = f(x), \quad \text{for a.a. } x \in \bigcup\limits_{\mathbf{T} \in \mathcal{R}} \overline{B}(\mathbf{R}_I + \mathbf{T},r_c).
\end{equation}

This relation enables one to simplify the expression of $(\mathrm{Id}+T^*)H(\mathrm{Id}+T)$ and \linebreak $(\mathrm{Id}+T^*)(\mathrm{Id}+T)$ to 
\begin{multline}
\label{eq:H_fullPAW}
H^{PAW} = (\mathrm{Id}+T^*) H (\mathrm{Id}+T) = H + \sumlim{I=1}{N_{at}} \sumlim{i,j=1}{\infty} \tilde{p}^I_i(\cdot - \mathbf{R}_I) \\
\left( \psh{\phi^I_i(\cdot - \mathbf{R}_I)}{H \phi^I_j(\cdot - \mathbf{R}_I)} - \psh{\widetilde{\phi}^I_i(\cdot - \mathbf{R}_I)}{H \widetilde{\phi}^I_j(\cdot - \mathbf{R}_I)} \right) \psh{\tilde{p}^I_j(\cdot - \mathbf{R}_I)}{\bullet} ,
\end{multline} 
and
\begin{multline}
\label{eq:S_fullPAW}
S^{PAW} = (\mathrm{Id}+T^*) (\mathrm{Id}+T) = \mathrm{Id} + \sumlim{I=1}{N_{at}} \sumlim{i,j=1}{\infty} \tilde{p}^I_i(\cdot - \mathbf{R}_I) \\
\left( \psh{\phi^I_i(\cdot - \mathbf{R}_I)}{ \phi^I_j(\cdot - \mathbf{R}_I)} - \psh{\widetilde{\phi}^I_i(\cdot - \mathbf{R}_I)}{ \widetilde{\phi}^I_j(\cdot - \mathbf{R}_I)} \right) \psh{\tilde{p}^I_j(\cdot - \mathbf{R}_I)}{\bullet} .
\end{multline} 

In practice, the double sums on $i,j$ appearing in the operators $H^{PAW}$ and $S^{PAW}$ are then truncated and the so-obtained
generalized eigenvalue problem is solved. Thus the identity
$\psi=(\mathrm{Id}+T)\tilde{\psi}$ does not hold anymore and the eigenvalues of the truncated problem are different from the
exact ones. An analysis of the eigenvalue problem $H^{PAW} f = E S^{PAW}f$ in our 1D-toy model will be provided in another paper \cite{blanc2017paw}.

In contrast, the VPAW approach makes use of a finite number of wave functions
$\phi^I_i$ right from the beginning, avoiding truncation approximations.
\newline

A further modification is used in practice. As the pseudo wave functions $\widetilde{\phi}_k^I$ are equal to $\phi_k^I$ outside the balls $\bigcup\limits_{\mathbf{T} \in \mathcal{R}} \overline{B}(\mathbf{R}_I + \mathbf{T},r_c)$, the integrals appearing in \eqref{eq:H_fullPAW} can be truncated to the ball $\overline{B}(\mathbf{R}_I,r_c)$. Doing so, another expression of $H^{PAW}$ can be obtained :
\begin{multline*}
H^{PAW} = (\mathrm{Id}+T^*) H (\mathrm{Id}+T) = H + \sumlim{I=1}{N_{at}} \sumlim{i,j=1}{\infty} \tilde{p}^I_i(\cdot - \mathbf{R}_I) \\
\left( \psh{\phi^I_i(\cdot - \mathbf{R}_I)}{H \phi^I_j(\cdot - \mathbf{R}_I)}_{\mathbf{R}_I} - \psh{\widetilde{\phi}^I_i(\cdot - \mathbf{R}_I)}{H \widetilde{\phi}^I_j(\cdot - \mathbf{R}_I)}_{\mathbf{R}_I} \right) \psh{\tilde{p}^I_j(\cdot - \mathbf{R}_I)}{\bullet} ,
\end{multline*}
where 
\begin{equation*}
\psh{f}{g}_{\mathbf{R}_I} = \itg{B(\mathbf{R}_I,r_c)}{}{f(x)g(x)}{x}.
\end{equation*}
Using this expression of the operator $H^{PAW}$, it is possible to introduce an $\mathcal{R}$-periodic potential $V^\mathrm{PP}$ such that :
\begin{enumerate}
\item $V^\mathrm{PP} = V_\mathrm{per}$ outside $\bigcup\limits_{\mathbf{T} \in \mathcal{R}} \bigcup\limits_{I=1}^{N_{at}} \overline{B}(\mathbf{R}_I + \mathbf{T},r_c)$,
\item $V^\mathrm{PP}$ is smooth inside $\bigcup\limits_{\mathbf{T} \in \mathcal{R}} \bigcup\limits_{I=1}^{N_{at}} \overline{B}(\mathbf{R}_I + \mathbf{T},r_c)$.
\end{enumerate}

The expression of $H^{PAW}$ is
\begin{multline}\label{eq:1}
H^{PAW} = H_\mathrm{ps} + \sumlim{I=1}{N_{at}} \sumlim{i,j=1}{\infty} \tilde{p}^I_i(\cdot - \mathbf{R}_I) \\
\left( \psh{\phi^I_i(\cdot - \mathbf{R}_I)}{H \phi^I_j(\cdot - \mathbf{R}_I)}_{\mathbf{R}_I} - \psh{\widetilde{\phi}^I_i(\cdot - \mathbf{R}_I)}{H_\mathrm{ps} \widetilde{\phi}^I_j(\cdot - \mathbf{R}_I)}_{\mathbf{R}_I} \right) \psh{\tilde{p}^I_j(\cdot - \mathbf{R}_I)}{\bullet} ,
\end{multline}
with $H_{ps}$:
\begin{displaymath}
H_\mathrm{ps} = - \frac{1}{2} \Delta + V^\mathrm{PP},
\end{displaymath}
where $V^\mathrm{PP}$ is a smooth pseudopotential. Note that, in practice, the sum of $i,j$ in \eqref{eq:1} is
truncated to some level $N_I$.

A study of the associated eigenvalue problem will also be provided in the aforementioned paper \cite{blanc2017paw}. Some numerical tests comparing the VPAW and PAW methods in our one-dimensional toy model can be found in Section \ref{sec:numerics}.

\subsection{Computational complexity}

A detailed analysis of the computational cost of the PAW method can be found in \cite{levitt2015parallel}: the cost scales like
$\cO(N M + M \log M)$ where $N = \sum_I N_I$ is the total number of projectors and $M$ the number of plane-waves. Usually, $N$ is
chosen relatively small, but $M$ may be large, so it is important to avoid a computational cost of order $M^2$. 

In practice, we are interested in the cost of the computation of $\widetilde{H} \tilde{\psi}$ and $\widetilde{S} \tilde{\psi}$ where $\tilde{\psi}$ is expanded in $M$ plane-waves as the generalized eigenvalue problem is solved by a conjugate gradient algorithm. We will only focus on $\widetilde{H} \tilde{\psi}$ since the analysis $\widetilde{S} \tilde{\psi}$ is similar. Let us split $\widetilde{H}$ into four terms:
$$
\widetilde{H} \tilde{\psi} = H \tilde{\psi} + P D_H P^T \tilde{\psi} + H \left(\Phi - \widetilde{\Phi}\right) P^T \tilde{\psi} + P H \left(\Phi - \widetilde{\Phi}\right)^T \tilde{\psi} ,
$$
where $P$ is the $M \times N$ matrix of the projector functions, $H (\Phi - \widetilde{\Phi})$ the $M \times N$ matrix of the Fourier representation of the $N$ functions $H(\phi_i - \widetilde{\phi}_i)$, and $D_H$ is the $N \times N$ matrix $\psh{\phi_i - \widetilde{\phi}_i}{H (\phi_j - \widetilde{\phi}_j)}$. 

The computational cost can be estimated as follows (the cost at each step is given in brackets):
\begin{enumerate}
\item $H \tilde{\psi}$ is assembled in two steps. First, $-\frac{1}{2} \Delta \tilde{\psi}$ is computed in $\cO (M)$ since the operator $\frac{1}{2} \Delta$ is diagonal in Fourier representation. For the potential $V$, apply an inverse FFT to $\tilde{\psi}$ to have the real space representation of $\tilde{\psi}$, multiply pointwise by $V$ and apply a FFT to the whole result ($\cO(M \log M)$);
\item for $P D_H P^T \tilde{\psi}$, compute the $N$ projections $P^T \tilde{\psi}$ ($\cO(MN)$), then successively apply the matrices $D_H$ ($\cO(N^2)$) and $P$ ($\cO(MN)$);
\item for $P H (\Phi - \widetilde{\Phi})^T \tilde{\psi}$, similarly apply successively $H(\Phi - \widetilde{\Phi})^T$ to $\tilde{\psi}$ ($\cO(MN)$) and $P$ to $H (\Phi - \widetilde{\Phi})^T \tilde{\psi}$ ($\cO(MN)$);
\item for $H (\Phi - \widetilde{\Phi}) P^T \tilde{\psi}$, we proceed as in step 3.
\end{enumerate}
Thus, the total numerical cost is of order $\cO(MN + M \log M)$ which is the same as for the PAW method. 

The matrix $H
(\Phi - \widetilde{\Phi})$ is approximated by a plane-wave expansion, which may be a poor
approximation because of the singularities of $\Phi$. However, it should be noticed that this is only an
intermediary in the computation of $\tilde\psi$, which is well approximated by plane-waves. Hence it is not
clear that a poor approximation of  $H(\Phi - \widetilde{\Phi})$ should imply a poor approximation of $\tilde \psi.$



\subsection{Generation of the pseudo wave functions}

In practice, there are two main ways to generate the pseudo wave functions $\tilde{\phi}_{k}$ and the projectors $\tilde{p}_{k}$ introduced by Blöchl \cite{blochl94} and Vanderbilt \cite{laasonen93}. For both schemes, the generation of the PAW functions has to be done for \emph{each} angular momentum $\ell$.

\subsubsection{Vanderbilt scheme}

\paragraph{Atomic wave function}

The functions $\varphi_k$ are simply the atomic wavefunctions defined earlier \emph{i.e.} solutions to the atomic eigenvalue problem 
$$
H_I \varphi^I_k = \epsilon_k \varphi^I_k , \quad \epsilon^I_1 \leq \epsilon^I_2 \leq \epsilon^I_3 \leq \dots, \quad \int_{\mathbb{R}^3} \varphi^I_j \varphi^I_k = \delta_{j k},
$$
with
$$
H_I = -\frac{1}{2} \Delta - \frac{Z_I}{|\mathbf{r}|} + W(|\mathbf{r}|).
$$
The eigenfunctions $\varphi_k$ can be decomposed into a spherical part $Y_{\ell m}$ -the real Laplace spherical harmonics- and a radial part $R_{n \ell}$
$$
\varphi_{k}(\br) = \frac{R_{n \ell}(r)}{r} Y_{\ell m} (\omega),
$$
where $k$ stands for the multiple indices $(n,\ell,m)$ and $\br = (r,\omega)$ is written in polar coordinates. 

\paragraph{Pseudo wave function}

The pseudo wave functions $\widetilde{\phi}_k$ are given by :
$$
\forall \br \in \Gamma, \ \widetilde{\phi}_k(\br) = \frac{\widetilde{R}_{n \ell}(r)}{r}\, Y_{\ell m}(\omega).
$$
Various choices of $\widetilde{R}_{n \ell}$ are possible, for example,  in \cite{laasonen93}, $\widetilde{R}_{n \ell}$ is a polynomial inside the augmentation region $\overline{B}(0,r_c)$ :
$$
\widetilde{R}_{n \ell}(r) = 
\begin{cases}
r^{\ell+1} \sumlim{k=0}{p} c_{2k} r^{2k}  & \text{for } 0 \leq r \leq r_c,\\
R_{n \ell}(r) & \text{for } r > r_c,
\end{cases}
$$
or in \cite{kresse99}, a sum of spherical Bessel functions $j_\ell$ :
$$
\widetilde{R}_{n \ell}(r) = 
\begin{cases}
r \sumlim{k=1}{p} \alpha_k j_\ell(q_k r) \ ,  & \text{for } 0 \leq r \leq r_c,\\
R_{n \ell}(r) & \text{for } r > r_c,
\end{cases}
$$
and the coefficients are chosen to match as many derivatives of $R_{n \ell}$ as possible at $r_c$.

\paragraph{Projector function}

First, define :
$$
\chi_{n \ell}(r) = \frac{1}{2} \widetilde{R}_{n \ell}''(r) + \left( E_n - \frac{\ell (\ell +1)}{2r^2}  - V^\mathrm{PP}_\ell(r) \right) \widetilde{R}_{n \ell}(r),
$$
where $V^\mathrm{PP}_\ell(r)$ is usually the Troullier-Martins pseudopotential \cite{troullier1991efficient} although other choices are possible. 

By construction, $\mathrm{supp}(\chi_{n \ell}) \subset [0,r_c]$. Let $B$ be the matrix 
$$
B_{n , n'} = \itg{0}{r_c}{\widetilde{R}_{n \ell}(r) \chi_{n' \ell}(r)}{r}.
$$
The radial parts of the projector functions are given by
$$
p_{n \ell}(r) =  \sumlim{n'=1}{N_I} \chi_{n' \ell}(r) \left( B^{-1} \right)_{n' n}.
$$
The projector functions are defined by
$$
\widetilde{p}_{n \ell}(\br) = \frac{p_{n \ell}(r)}{r} Y_{\ell m}(\omega).
$$
This ensures that $\psh{\widetilde{p}_{n \ell}}{\widetilde{\phi}_{n' \ell'}} = \delta_{nn'}\delta_{\ell \ell'}$. 

\subsubsection{Blöchl scheme}

The PAW functions are generated in two steps. For each angular momentum $\ell$, we define auxiliary functions $\widetilde{R}^0_{n \ell}$ and $p^0_{n \ell}$:

\paragraph{Auxiliary functions}

Let $\chi(r)$ be the cut-off function 
$$
\chi(r) = 
\begin{cases}
 \frac{\sin (\pi r /r_c)}{(\pi r /r_c)} & \text{for } r \leq r_c, \\
0 & \text{for } r > r_c,
\end{cases}
$$
and let $(C_{n \ell}, \widetilde{R}^0_{n \ell})_{n \in \mathbb{N}^*}$ be the unique solution to:
\begin{equation}
\label{eq:eig_blochl}
\begin{cases}
- \frac{1}{2} (\widetilde{R}^0_{n \ell}) ^{\prime \prime}(r) + \frac{\ell (\ell +1)}{2r^2} \widetilde{R}^0_{n \ell} + (V^\mathrm{PP}_\ell - E_n) \widetilde{R}^0_{n \ell} = C_{n \ell} \chi(r) \widetilde{R}^0_{n \ell} , \quad 0 \leq r \leq r_c \\
\widetilde{R}^0_{n \ell}(0) = 0 \\
\widetilde{R}^0_{n \ell}(r_c) = R^0_{n \ell}(r_c), \quad (\widetilde{R}^0_{n \ell} )^\prime (r_c) = (R^0_{n \ell})^\prime(r_c).
\end{cases}
\end{equation}
Let $p^0_{n \ell}$ be the auxiliary functions:
$$
p^0_{n \ell}(r) = \frac{\chi(r) \widetilde{R}^0_{n \ell}(r)}{\left(\widetilde{R}^0_{n \ell}(r) | \chi(r) \widetilde{R}^0_{n \ell}(r) \right)},
$$
where 
$$
\left( f | g \right) = \itg{0}{r_c}{f(r)g(r)}{r}.
$$

\paragraph{PAW functions}

Finally the radial part of all the PAW functions are constructed with a Gram-Schmidt process. We describe it here
assuming that only two quantum numbers $n_1<n_2$ are needed for the computation. However, one should bear in mind
that, on the one hand, usually, $n_2 = n_1+1$, and on the other hand, although limiting the procedure to two quantum numbers is in general sufficient for practical purposes, it is straightforward to generalize the following orthogonalization procedure to an arbitrary
number of quantum numbers. 
\begin{enumerate}
\item \emph{Basis:} the first set of functions $\widetilde{R}_{n_1 \ell}$, $p_{n_1 \ell}$ and $R_{n_1 \ell}$, corresponding to the lowest principal quantum number $n$ used, are defined by
$$
\widetilde{R}_{n_1 \ell} = \widetilde{R}^0_{n_1 \ell}, \quad p_{n_1 \ell} = p^0_{n_1 \ell}, \quad R_{n_1 \ell} = R^0_{n_1 \ell}.
$$
\item \emph{Inductive step:}  if there is a second radial basis function for $n_2 > n_1$, 
	\begin{itemize}
		\item first, the function $\widetilde{R}^0_{n_2 \ell}$ is orthogonalized against $p_{n_1 \ell}$:
		\begin{equation}
		\label{eq:blochl_tdR}
		\widetilde{R}_{n_2 \ell}(r) =\mathcal{F}_{n_2 \ell} \left( \widetilde{R}_{n_2 \ell}^0(r) - \widetilde{R}_{n_1 \ell}(r) \left( p_{n_1 \ell} | \widetilde{R}_{n_2 \ell}^0 \right) \right),
		\end{equation}
		where the factor 
		\begin{equation*}
	\mathcal{F}_{n_2 \ell} = \frac{1}{\left( 1 - \left( \widetilde{R}_{n_2 \ell}^0 | p_{n_1 \ell} \right) \left( \widetilde{R}_{n_1 \ell}^0 | p_{n_2 \ell} \right)  \right)^{1/2}}
		\end{equation*}
		is a normalization constant;
		\item similarly, the function $p^0_{n_2 \ell}$ is orthogonalized against $\widetilde{R}_{n_1 \ell}$ by noticing that \linebreak $\left( \widetilde{R}_{n_2 \ell}^0 | p_{n_1 \ell} \right) = \left( \widetilde{R}_{n_1 \ell}^0 | p^0_{n_2 \ell} \right)$:
		\begin{equation*}
		p_{n_2 \ell}(r) = \mathcal{F}_{n_2 \ell} \left( p^0_{n_2 \ell} - p_{n_1 \ell} \left( \widetilde{R}_{n_2 \ell}^0 | p_{n_1 \ell} \right) \right);
		\end{equation*}
		\item finally, to ensure the continuity between the radial functions ${R}_{n_2 \ell}$ and $\widetilde{R}_{n_2 \ell}$, we apply to ${R}_{n_2 \ell}^0$ the same linear combination in Equation \eqref{eq:blochl_tdR} 
		\begin{equation*}
		{R}_{n_2 \ell}(r) =\mathcal{F}_{n_2 \ell} \left( {R}_{n_2 \ell}^0(r) - {R}_{n_1 \ell}(r) \left( p_{n_1 \ell} | \widetilde{R}_{n_2 \ell}^0 \right) \right).
		\end{equation*}
	\end{itemize}
\end{enumerate}

The PAW functions are given by 
$$
\phi_{n \ell m}(x) = \frac{R_{n \ell}(r)}{r} \, Y_{\ell m}(\theta, \varphi),
$$
$$
\widetilde{\phi}_{n \ell m}(x) = \frac{\widetilde{R}_{n \ell}(r)}{r} \, Y_{\ell m}(\theta, \varphi),
$$
$$
\tilde{p}_{n \ell m}(x) = \frac{p_{n \ell}(r)}{r} \, Y_{\ell m}(\theta, \varphi).
$$
\newline

\paragraph{Organization of the paper}

In this paper, we apply the VPAW formalism to the double Dirac potential with periodic boundary conditions in one dimension. The eigenfunctions of this model have a derivative jump at the positions of the Dirac potentials which is similar to the electronic wave function cusp. Furthermore, the eigenvalues and eigenfunctions being known analytically, it is possible to confront our theoretical results to very accurate numerical tests. 

In Section \ref{sec:setting}, we carefully present the VPAW method in our framework. In Section \ref{sec:results}, Fourier decay estimates of the pseudo wave functions are given as well as estimates on the computed eigenvalues. Proofs of these results are gathered in Section \ref{sec:proof}. In Section \ref{sec:smooth}, we discuss the effect of the addition of a smooth potential to the double Dirac model. Numerical simulations which confirm the obtained theoretical results are provided in Section \ref{sec:numerics}.

\section*{Notation}

From now on, $\psh{\cdot}{\cdot}$ denotes the usual inner product in $L^2_\mathrm{per}(0,1)$. 

Let $f$ be a piecewise continuous function. We denote by:
$$
[f]_x := f(x_+) - f(x_-),
$$
where $f(x_+)$ and $f(x_-)$ are respectively the right-sided and left-sided limits of $f$ at $x$. 

Let $f$ be a continuous function. We denote by 
$$
\underbrace{\idotsint}_{2j+2} f (x) = \int\limits_0^x \int\limits_0^{t_1} \dots \int\limits_0^{t_{2j+1}} f(t_{2j+2}) \, \mathrm{d}t_{2j+2} \dots \mathrm{d}t_1
$$
the $(2j+2)$-primitive function of $f$ vanishing at $0$ as well as its first $(2j+1)$-st derivatives.

For $a$ and $b$ in $\mathbb{R}^N$, $a \cdot b$ is the Euclidean inner product. $e_k$ is the $k$-th canonical vector of $\mathbb{R}^d$ or $\mathbb{R}^N$. $I_N$ is the identity matrix of size $N$.

\section{The VPAW method for a one-dimensional model}
\label{sec:setting}

\subsection{The double Dirac potential}

We are interested in the lowest eigenvalues of the 1-D periodic Schr\"odinger operator $H$  on $L^2_{\mathrm{per}}(0,1) := \{ \phi \in L^2_{\mathrm{loc}} (\mathbb{R}) \ | \ \phi \text{ 1-periodic} \}$ with form domain $H^1_{\mathrm{per}}(0,1) := \{ \phi \in L^2_{\mathrm{loc}} (\mathbb{R}) \ | \ \phi' \in L^2_{\mathrm{loc}} (\mathbb{R})\}$ :
\begin{equation}
H = -\frac{\mathrm{d}^2}{\mathrm{d}x^2} - Z_0 \sumlim{k \in \Z}{} \delta_k - Z_a \sumlim{k \in \Z}{} \delta_{k+a},
\label{eq:H_mol}
\end{equation}
where $0 < a < 1$, $Z_0, Z_a >0$. 

A mathematical analysis of this model has been carried out in \cite{cances2017discretization}. There are two negative eigenvalues $E_0 = -\omega_0^2$ and $E_1 = -\omega_1^2$ which are given by the zeros of the function 
$$
f(\omega) = 2 \omega^2 (1- \cosh (\omega)) + (Z_0 + Z_a) \omega \sinh (\omega) - Z_0 Z_a \sinh(a \omega) \sinh((1-a)\omega).
$$
The corresponding eigenfunctions are 
\begin{equation*}
\psi_k(x) = 
\begin{cases}
A_{1,k} \cosh (\omega_k x) + B_{1,k} \sinh(\omega_k x) \ , \ 0 \leq x \leq a, \\
A_{2,k} \cosh (\omega_k x) + B_{2,k} \sinh(\omega_k x) \ , \ a \leq x \leq 1, \\
\end{cases}
\end{equation*}
where the coefficients $A_{1,k}$, $A_{2,k}$, $B_{1,k}$ and $B_{2,k}$ are determined by the continuity conditions and the derivative jumps at $0$ and $a$.

There is an infinity of positive eigenvalues $E_{k+2} = \omega_{k+2}^2$ which are given by the $k$-th zero of the function :
\begin{equation*}
f(\omega) = 2 \omega^2 (1- \cos (\omega)) + (Z_0 + Z_a) \omega \sin (\omega) + Z_0 Z_a \sin(a \omega) \sin((1-a)\omega),
\end{equation*}
and the corresponding eigenfunctions $H \psi_k = \omega_k^2 \psi_k$ are   
$$
\psi_k(x) = 
\begin{cases}
A_{1,k} \cos ( \omega_k x) + B_{1,k} \sin( \omega_k x) \ , \ 0 \leq x \leq a, \\
A_{2,k} \cos ( \omega_k x) + B_{2,k} \sin( \omega_k x) \ , \ a \leq x \leq 1, \\
\end{cases}
$$
where again the coefficients $A_{1,k}$, $A_{2,k}$, $B_{1,k}$ and $B_{2,k}$ are determined by the continuity conditions and the derivative jumps at $0$ and $a$.

\subsection{The VPAW method}
\label{subsec:VPAW1D}

The principle of the VPAW method consists in replacing the original eigenvalue problem
$$
H\psi = E\psi,
$$
by the generalized eigenvalue problem:
\begin{equation}
(\mathrm{Id}+T)^* H (\mathrm{Id}+T) \tilde{\psi} = E (\mathrm{Id}+T)^* (\mathrm{Id}+T) \tilde{\psi},
\label{eq:H_PAW}
\end{equation}
where $\mathrm{Id}+T$ is an invertible bounded linear operator on $L^2_\mathrm{per}(0,1)$. Thus both problems have the same eigenvalues and it is straightforward to recover the eigenfunctions of the former from the generalized eigenfunctions of the latter:
\begin{equation}
\psi = (\mathrm{Id}+T) \tilde{\psi}.
\label{eq:psi_PAW}
\end{equation}

$T$ is the sum of two operators acting near the atomic sites 
$$
T = T_0 + T_a.
$$
To define $T_0$, we fix an integer $N$ and a radius $0 < \eta < \min(\frac{a}{2}, \frac{1-a}{2})$ so that $T_0$ and $T_a$ act on two disjoint regions $\bigcup\limits_{k \in \Z} [-\eta+k,\eta+k]$ and $\bigcup\limits_{k \in \Z} [a-\eta+k,a+\eta+k]$ respectively. 

\paragraph{Atomic wave function $\phi_k$}
Let $H_0$ be the operator defined by :
$$
H_0 = -\frac{\mathrm{d}^2}{\mathrm{d}x^2} - Z_0 \sumlim{k \in \Z}{} \delta_k .
$$
By parity, the eigenfunctions of this operator are even or odd. The odd eigenfunctions are in fact $x \mapsto \sin (2 \pi k x)$ and the even ones are the $1$-periodic functions such that
$$
\begin{cases}
\phi_0(x) := \cosh(\omega_0 (x - \tfrac{1}{2})), \text{for } x \in [0,1], \\
\phi_k(x) := \cos(\omega_k (x-\tfrac{1}{2}) ), \text{for } x \in [0,1], k \in \N^*.
\end{cases}
$$
To construct $T_0$, we will only select the non-smooth thus even eigenfunctions $(\phi_k)_{1 \leq k \leq N}$ and denote by $(\epsilon_k)_{1 \leq k \leq N}$ the corresponding eigenvalue:
$$
H_0 \phi_k = \epsilon_k \phi_k.
$$

\paragraph{Pseudo wave function $\tilde{\phi}_i$}
The pseudo wave functions $(\tilde{\phi}_i)_{1 \leq i \leq N} \in \left(H^1_\mathrm{per}(0,1)\right)^N$ are defined as follows:
\begin{enumerate}
\item for $|x| \notin \bigcup\limits_{k \in \Z} [-\eta + k, \eta+k]$, $\tilde{\phi}_i(x) = \phi_i(x)$.
\item for $|x| \in \bigcup\limits_{k \in \Z} [-\eta + k, \eta+k]$, $\tilde{\phi}_i$ is an even polynomial of degree at most $2d-2$, $d \geq N$.  
\item $\tilde{\phi}_i$ is $C^{d-1}$ at $\eta$ \emph{i.e.} $\tilde{\phi}_i^{(k)}(\eta) = \phi_i^{(k)}(\eta)$ for $0 \leq k \leq d-1$.
\end{enumerate}

\paragraph{Projector functions $\tilde{p}_i$}
Let $\rho$ be a positive, continuous function with support $[-1,1]$ and $\rho_\eta(t) = \sumlim{k\in \Z}{} \rho(\tfrac{t-k}{\eta})$. The projector functions $(\tilde{p}_i)_{1 \leq i \leq N}$ are obtained by an orthonormalization procedure from the functions $p_i(t) = \rho_\eta (t) \tilde{\phi}_i(t)$ in order to satisfy the duality condition :
$$
\psh{\tilde{p}_i}{\tilde{\phi}_j} = \delta_{ij} .
$$
More precisely, we compute the matrix $B_{ij} := \psh{p_i}{\tilde{\phi}_j}$ and invert it to obtain the projector functions
$$
\tilde{p}_k = \sumlim{j=1}{N} (B^{-1})_{kj} p_j. 
$$
The matrix $B$ is the Gram matrix of the functions $\tilde{\phi}_j$ for the weight $\rho_\eta$. The orthogonalization is possible only if the family $(\tilde{\phi}_i)_{1 \leq i \leq N}$ is independent - thus necessarily $d \geq N$.  
\\

$T_0$ and $T_a$ are given by :
\begin{equation}
\label{eq:T0}
T_0 = \sumlim{i=1}{N} (\phi_{i} - \tilde{\phi}_i) \psh{\tilde{p}_i}{\bullet}, \qquad T_a = \sumlim{i=1}{N} (\phi_{i}^a - \tilde{\phi}_i^a) \psh{\tilde{p}_i^a}{\bullet} \ ,
\end{equation}
where $\phi_{i}^a$ are singular eigenfunctions of the operator $H_a = -\frac{\mathrm{d}^2}{\mathrm{d}x^2} - Z_a \sumlim{k \in \Z}{}\delta_{a+k}$ and $\tilde{\phi}_i^a$, $\tilde{p}_i^a$ are defined as before. 

In the VPAW method, the generalized eigenvalue problem (\ref{eq:H_PAW}) is solved by expanding $\tilde{\psi}$ in plane-waves. 

\begin{note}
Here we have followed the Vanderbilt scheme to generate the pseudo wave functions and the projector functions with the difference that the orthogonalized functions $p$ are taken from the Bl\"ochl construction. 
\end{note}

\subsection{Well-posedness of the VPAW method}

To be well-posed the VPAW method requires
\begin{enumerate}
\item the family of pseudo wave functions $(\tilde{\phi}_i)_{1 \leq i \leq N}$ to be independent on $[-\eta,\eta]$, so that the projector functions $(\tilde{p}_k)_{1 \leq k \leq N}$ are well defined,
\item $(\mathrm{Id}+T)$ to be invertible.
\end{enumerate}  

The conditions on the VPAW functions and parameters are given by the following propositions. Proofs can be found in Section \ref{sec:proof}. 

\begin{prop}[Linear independence of the pseudo wave functions]
\label{prop:tdphi_exist}
Let $N \in \N^*$ and $d \geq N$. There exists $\eta_0 > 0$ such that for all $0 < \eta \leq \eta_0$, the family $( \tilde{\phi}_i|_{[-\eta,\eta]})_{1 \leq i \leq N}$ is linearly independent. 
\end{prop}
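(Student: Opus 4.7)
The strategy is to parameterize $\tilde{\phi}_i$ on $[-\eta,\eta]$ by the jet $(\phi_i^{(k)}(\eta))_{0 \le k \le d-1}$, to reduce linear independence on $[-\eta,\eta]$ to a Wronskian-type non-vanishing condition at $\eta$, and then to exploit real-analyticity in $\eta$ to localize the argument near $0$.

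I would first observe that the space of even polynomials of degree at most $2d-2$ has dimension $d$, and that the Hermite evaluation map $P \mapsto (P^{(k)}(\eta))_{0 \le k \le d-1}$ is a linear isomorphism onto $\R^d$. Its injectivity comes from parity: an even polynomial of degree $\le 2d-2$ with a zero of order $d$ at $\eta > 0$ would, by evenness, also have a zero of order $d$ at $-\eta$, hence a total multiplicity of zeros $\ge 2d$, forcing it to vanish. Let $(P_{k,\eta})_{0 \le k \le d-1}$ denote the corresponding dual basis; one then has the explicit representation
\[
\tilde{\phi}_i = \sum_{k=0}^{d-1} \phi_i^{(k)}(\eta)\, P_{k,\eta}.
\]
Consequently $\sum_i \alpha_i \tilde{\phi}_i \equiv 0$ on $[-\eta,\eta]$ is equivalent to $\sum_i \alpha_i \phi_i^{(k)}(\eta) = 0$ for every $k = 0,\ldots,d-1$, and the proposition reduces to showing that the matrix $M(\eta) = (\phi_i^{(k)}(\eta))_{0 \le k \le d-1,\, 1 \le i \le N} \in \R^{d \times N}$ has full column rank.

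Since $d \ge N$, it suffices to exhibit one non-vanishing $N \times N$ minor, which I would take to be the top one, namely the Wronskian
\[
W(\eta) = \det \bigl( \phi_i^{(k)}(\eta) \bigr)_{0 \le k \le N-1,\, 1 \le i \le N}.
\]
On $(0,1/2)$ each $\phi_i$ coincides with $\cos(\omega_i(\cdot - 1/2))$ (or with $\cosh$ in the exceptional negative-energy case), hence extends analytically to a neighbourhood of $\eta = 0$. The eigenvalues $\epsilon_i$ being pairwise distinct, so are the frequencies $\omega_i$, and the family $(\phi_i)_{1 \le i \le N}$ is linearly independent on $(0,1/2)$; the classical theorem that the Wronskian of finitely many linearly independent real-analytic functions on a connected interval cannot vanish identically then yields $W \not\equiv 0$. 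Since $W$ is itself real-analytic in a neighbourhood of $0$ and not identically zero, its zeros there are isolated, and one may choose $\eta_0 > 0$ (small enough to also satisfy $\eta_0 < \min(a/2,(1-a)/2)$) so that $W(\eta) \ne 0$ for every $\eta \in (0,\eta_0]$, which concludes the proof. The main obstacle is $W \not\equiv 0$; once the linear independence of the analytic $\phi_i$ and the classical analytic Wronskian theorem are in hand, analyticity in $\eta$ automatically yields the required neighbourhood of $0$ without any explicit computation of $W(0^+)$.
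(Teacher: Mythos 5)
Your proof is correct, and it reaches the conclusion by a genuinely different key step than the paper. Both arguments share the same skeleton: reduce linear independence of $(\tilde{\phi}_i|_{[-\eta,\eta]})$ to the full column rank of the matrix $(\phi_i^{(k)}(\eta))_{0\le k\le d-1,\,1\le i\le N}$ (you actually justify this reduction, via the Hermite interpolation isomorphism on even polynomials of degree $\le 2d-2$, whereas the paper simply asserts the equivalence), then focus on the top $N\times N$ minor $W(\eta)$ and use analyticity in $\eta$ plus isolated zeros to extract an interval $(0,\eta_0]$ on which it does not vanish. The difference lies in how $W\not\equiv 0$ is established: the paper complexifies $\eta$, evaluates asymptotically at $\eta=-ix+\tfrac12$ with $x$ large, and factors the limiting matrix as a diagonal matrix of exponentials times a Vandermonde matrix in the pairwise distinct $\omega_j$; you instead invoke the classical theorem that the Wronskian of finitely many linearly independent \emph{real-analytic} functions cannot vanish identically, applied to the $\phi_i$ on $(0,\tfrac12)$ where they coincide with $\cos(\omega_i(\cdot-\tfrac12))$ (or $\cosh$). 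Your route is softer and avoids any explicit computation, at the price of relying on two classical facts that you should state carefully: the analytic Wronskian theorem (which is genuinely tied to analyticity --- it fails for general $C^\infty$ families, so the hypothesis is doing real work), and the linear independence of the $\phi_i$ on the subinterval $(0,\tfrac12)$, which deserves one explicit line (e.g.\ a combination vanishing there vanishes on $(0,1)$ by analytic continuation, contradicting the independence of eigenfunctions associated with pairwise distinct eigenvalues, or equivalently the distinctness of the frequencies $\omega_i$). The paper's computation, though heavier, is self-contained and exhibits the concrete Vandermonde structure behind the non-degeneracy; your argument generalizes more readily to situations where no explicit formula for the $\phi_i$ is available, as long as they are analytic near the matching radius.
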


\begin{prop}[Invertibility of $\mathrm{Id}+T$]
\label{cond:invrs}
The operator $\mathrm{Id}+T$ is invertible in $L^2_\mathrm{per}(0,1)$ if and only if the matrix $(\psh{\tilde{p}_k}{\phi_\ell})_{1 \leq k,\ell \leq N}$ is invertible. 
\end{prop}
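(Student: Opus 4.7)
Since $T = T_0 + T_a$ is a finite-rank operator on $L^2_\mathrm{per}(0,1)$, $\mathrm{Id}+T$ is a compact perturbation of the identity, and hence Fredholm of index zero; it therefore suffices to characterize when $\mathrm{Id}+T$ is injective. Throughout I rely on one structural fact: by construction and the condition $\eta < \min(a/2,(1-a)/2)$, the supports of $\tilde p_i$ and of $\phi_j - \tilde\phi_j$ sit in $\bigcup_{k\in\Z}[-\eta+k,\eta+k]$ while those of $\tilde p_i^a$ and $\phi_j^a - \tilde\phi_j^a$ sit in $\bigcup_{k\in\Z}[a-\eta+k,a+\eta+k]$, and these two regions are disjoint. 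Combined with the duality $\psh{\tilde p_i}{\tilde\phi_j} = \delta_{ij}$, all cross-pairings such as $\psh{\tilde p_k}{\phi_j^a - \tilde\phi_j^a}$ vanish, so the analysis decouples across the two atomic sites.

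For the sufficiency direction, I would take $\psi \in \ker(\mathrm{Id}+T)$ and rewrite $\psi = -T\psi = -T_0\psi - T_a\psi$. By disjointness of supports one may split $\psi = \psi_0 + \psi_a$ with $\psi_0 = -T_0\psi = \sum_{i=1}^N \gamma_i(\phi_i - \tilde\phi_i)$ and $\gamma_i := -\psh{\tilde p_i}{\psi}$. Testing against each $\tilde p_k$, using $\psh{\tilde p_k}{\psi_a}=0$, yields
\[
\gamma_k \;=\; -\psh{\tilde p_k}{\psi_0} \;=\; -\sum_{i=1}^N \gamma_i\bigl(\psh{\tilde p_k}{\phi_i} - \delta_{ki}\bigr),
\]
which rearranges into $M\gamma = 0$ with $M_{ki} = \psh{\tilde p_k}{\phi_i}$; the analogous identity $M^a\gamma^a = 0$ holds at site $a$. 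Invertibility of $M$ (and $M^a$) then forces $\gamma = \gamma^a = 0$, hence $\psi_0 = \psi_a = 0$, so $\psi = 0$.

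For the converse, given $\gamma \ne 0$ with $M\gamma = 0$, I would set $\psi := \sum_{i=1}^N \gamma_i(\phi_i - \tilde\phi_i)$ and run the above computation in reverse: $\psh{\tilde p_k}{\psi} = (M\gamma)_k - \gamma_k = -\gamma_k$ and $\psh{\tilde p_k^a}{\psi} = 0$, so that $T\psi = -\psi$ and thus $\psi \in \ker(\mathrm{Id}+T)$. The main obstacle is to certify that $\psi \ne 0$, i.e.\ that the family $(\phi_i-\tilde\phi_i)_{1 \le i \le N}$ is linearly independent on $[-\eta,\eta]$. The plan is to exploit cusp jumps: $\sum\gamma_i\tilde\phi_i$ is an even polynomial on $[-\eta,\eta]$ hence $C^\infty$, while $\sum\gamma_i\phi_i$ inherits the Dirac-induced singularities of $H_0$ at $0$. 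From $\phi_i(x) = \cos(\omega_i(x-\tfrac12))$ on $[0,1]$ extended $1$-periodically (even around $0$), a direct computation gives
\[
[\phi_i^{(2k+1)}]_0 \;=\; 2(-1)^k\,\omega_i^{2k+1}\sin(\omega_i/2),
\]
and equating $\sum_i\gamma_i[\phi_i^{(2k+1)}]_0 = 0$ for $k=0,\dots,N-1$ produces a Vandermonde system in the pairwise distinct $\omega_i^2$. The scalings $\omega_i\sin(\omega_i/2)$ are nonzero by the secular equation $2\omega_i\sin(\omega_i/2) = -Z_0\cos(\omega_i/2)$ defining $\omega_i$, so $\gamma = 0$, contradicting the hypothesis. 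The $\cosh$-type ground state and the site-$a$ matrix $M^a$ are handled identically.
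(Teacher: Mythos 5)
Your proof is correct and follows essentially the same route as the paper: finite rank of $T$ reduces invertibility to injectivity of $\mathrm{Id}+T$, injectivity is tested by pairing with the projectors $\tilde p_k$ and using the duality $\psh{\tilde p_k}{\tilde\phi_i}=\delta_{ki}$, and non-invertibility of $(\psh{\tilde p_k}{\phi_i})$ yields the explicit kernel element $\sum_i\gamma_i(\phi_i-\tilde\phi_i)$, your explicit decoupling of the two sites being a welcome extra care that the paper leaves implicit. The one genuinely new ingredient, the cusp/Vandermonde argument certifying $\psi\neq0$, is correct but superfluous: from your own identity $\psh{\tilde p_k}{\psi}=-\gamma_k$ together with $\gamma\neq0$, the function $\psi$ has a nonzero pairing with some $\tilde p_k$ and therefore cannot vanish.
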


From now on, we will establish our results under the following \\
\framebox[\textwidth]{ \textbf{Assumption :} the matrix $(\psh{\tilde{p}_k}{\phi_\ell})_{1 \leq k,\ell \leq N}$ is invertible for all $0 < \eta \leq \eta_0$. } \par

\section{Main results}
\label{sec:results}

We know from (\ref{eq:psi_PAW}) that 
$$
\tilde{\psi} = \psi - \sumlim{i=1}{N} (\phi_{i} - \tilde{\phi_i}) \psh{\tilde{p}_i}{\tilde{\psi}} - \sumlim{i=1}{N} (\phi_{i}^a - \tilde{\phi}_i^a) \psh{\tilde{p}_i^a}{\tilde{\psi}}.
$$
In addition, $\tilde{\psi}$ is a piecewise smooth function with first derivative jumps (due to $\psi$ and the atomic wave function $\phi_k$) at points of  $\Z$, $\Z + a$ and $d$-th derivative jumps (due to the pseudo wave functions $\tilde{\phi}_k$) at points of $\Z \pm \eta$ and $\Z + a \pm \eta$. These singularities drive the decays of the Fourier coefficients. Thus to study the Fourier convergence rate, it suffices to study the dependency of the different singularities with respect to $N$ -the number of PAW functions used-, $d$ -the smoothness of the pseudo wave functions $\tilde{\phi}_k$- and $\eta$ -the cut-off radius.

\begin{prop}[Derivative jumps at $0$]
\label{lem:der_jump_0}
Let $N \in \N^*$ and $d \geq N$. Then, there exists a positive constant $C$ independent of $\eta$ such that for $0 \leq j \leq N-1$
\begin{equation}
\label{eq:der_jump_1}
\forall \, 0 < \eta \leq \eta_0, \ \left| \left[\tilde{\psi}^{(2j+1)} \right]_0 \right| \leq C \eta^{2N-2j} \ ,
\end{equation}
and for $j \geq N$
\begin{equation}
\label{eq:der_jump_2}
\forall \, 0 < \eta \leq \eta_0, \ \left| \left[\tilde{\psi}^{(2j+1)} \right]_0 \right| \leq C .
\end{equation}
\end{prop}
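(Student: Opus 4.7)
The plan is to express $[\tilde\psi^{(2j+1)}]_0$ as an algebraic quantity $-Z_0 y_j$, derive $N$ linear constraints on the sequence $(y_k)$ from the VPAW normal equations, and close the bound by a scaling/Vandermonde argument.

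First I would compute the jump. On $[-\eta,\eta]$ the $T_a$-contribution to $T\tilde\psi$ is supported away from $0$, and each $\tilde\phi_i$ is a polynomial (no jumps), so all jumps of $\tilde\psi^{(m)}$ at $0$ come from $\psi$ and the $\phi_i$'s. Setting $c_i:=\langle\tilde p_i,\tilde\psi\rangle$,
\[
[\tilde\psi^{(2j+1)}]_0 = [\psi^{(2j+1)}]_0 - \sum_{i=1}^N c_i\,[\phi_i^{(2j+1)}]_0.
\]
The distributional identities $-\psi''=E\psi-Z_0\psi(0)\delta_0-Z_a\psi(a)\delta_a$ and $-\phi_i''=\epsilon_i\phi_i-Z_0\phi_i(0)\delta_0$ yield $[\psi^{(2j+1)}]_0=-Z_0(-E)^j\psi(0)$ and $[\phi_i^{(2j+1)}]_0=-Z_0(-\epsilon_i)^j\phi_i(0)$, so $[\tilde\psi^{(2j+1)}]_0=-Z_0\,y_j$ with
\[
y_j := (-E)^j\psi(0) - \sum_{i=1}^N c_i\,(-\epsilon_i)^j\,\phi_i(0).
\]

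Next I would exploit the VPAW normal equations $A\vec c=\vec b$, where $A_{ij}=\langle\tilde p_i,\phi_j\rangle$ and $b_i=\langle\tilde p_i,\psi\rangle$. Evenness of $\tilde p_i$ and $\phi_j$ (and disjointness of the $T_a$-support from $[-\eta,\eta]$) turns these into $\int_0^\eta\tilde p_i(\Psi^+-\sum_j c_j\phi_j^+)\,dx=0$, where $\psi^\pm$ denote the analytic continuations of the two pieces of $\psi$ adjacent to $0$, $\Psi^+(x):=\tfrac12(\psi^+(x)+\psi^-(-x))$ is their even symmetrisation, and $\phi_j^+$ is the $\cos/\cosh$-continuation of $\phi_j|_{(0,1)}$. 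The jump identities $[\psi']_0=-Z_0\psi(0)$ and $\phi_j'(0_+)=-\tfrac{Z_0}{2}\phi_j(0)$ (the latter from evenness of $\phi_j$) combined with Taylor expansion at $0$ give $\Psi^{+(2k)}(0)=(-E)^k\psi(0)$, $\Psi^{+(2k+1)}(0)=-\tfrac{Z_0}{2}(-E)^k\psi(0)$, and the analogous formulas for $\phi_j^+$ with $(-\epsilon_j,\phi_j(0))$ in place of $(-E,\psi(0))$. Setting
\[
\tilde m_{i,k}:=2\int_0^\eta\tilde p_i(x)\Bigl[\tfrac{x^{2k}}{(2k)!}-\tfrac{Z_0\,x^{2k+1}}{2(2k+1)!}\Bigr]dx,
\]
the equations become $A_{ij}=\phi_j(0)\sum_k(-\epsilon_j)^k\tilde m_{i,k}$ and $b_i=\psi(0)\sum_k(-E)^k\tilde m_{i,k}$; substitution into the definition of $y_j$ produces the linear identity
\[
\sum_{k\ge 0}\tilde m_{i,k}\,y_k = 0,\qquad i=1,\ldots,N.
\]

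Finally I would conclude by a scaling argument. The change of variable $x=\eta t$ factorises $\tilde m_{i,k}=\eta^{2k}\widehat M_{i,k}$ with $\widehat M_{i,k}$ bounded as $\eta\to 0$. Splitting the identity at $k=N$, the tail $\sum_{k\ge N}\eta^{2k}\widehat M_{i,k}y_k$ is $O(\eta^{2N})$, since the $y_k$'s are the Taylor coefficients at $0$ of the analytic function $\tilde g^+:=\Psi^+-\sum_i c_i\phi_i^+$ and hence have controlled growth in $k$. The main obstacle is to show that the $N\times N$ matrix $\widehat M=(\widehat M_{i,k})_{1\le i\le N,\,0\le k\le N-1}$ has $\|\widehat M^{-1}\|$ uniformly bounded in $\eta$. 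This rests on the asymptotic factorisation $A=\widehat M\,\mathrm{diag}(\eta^{2k})\,V+O(\eta^{2N})$ with $V_{kj}=(-\epsilon_j)^k\phi_j(0)$ a generalised Vandermonde matrix (non-singular since the $\epsilon_j$'s are distinct and $\phi_j(0)\ne 0$), combined with Proposition~\ref{cond:invrs}: invertibility of $A$ forces that of $\widehat M$ with the required uniformity. Inverting then yields $\eta^{2j}|y_j|\le C\eta^{2N}$, i.e.\ \eqref{eq:der_jump_1}. Estimate \eqref{eq:der_jump_2} for $j\ge N$ follows directly from uniform boundedness of the $c_i$'s, again via Proposition~\ref{cond:invrs}.
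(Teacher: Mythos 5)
Your reduction is fine and in fact mirrors the paper's: the jump formula $[\tilde\psi^{(2j+1)}]_0=-Z_0\bigl((-E)^j\psi(0)-\sum_i c_i(-\epsilon_i)^j\phi_i(0)\bigr)$ with $c_i=\psh{\tilde p_i}{\tilde\psi}$, the normal equations $\widetilde A\,\psh{\tilde p}{\tilde\psi}=\psh{\tilde p}{\psi}$, and the resulting exact constraints $\sum_{k\ge 0}\tilde m_{i,k}\,y_k=0$ are all correct (this is Lemma \ref{lem:der_jump} plus \eqref{eq:p_psi-p_tildepsi} in disguise). The gap is in the quantitative step, which is precisely the hard part of the proof. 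Your claim that $\tilde m_{i,k}=\eta^{2k}\widehat M_{i,k}$ with $\widehat M_{i,k}$ bounded as $\eta\to 0$ is false in general: $\widehat M_{i,k}$ carries a factor $\int_{-\eta}^{\eta}|\tilde p_i|$, and the projectors blow up as $\eta\to 0$ because the rescaled pseudo wave functions $\tilde\phi_j(\eta\,\cdot)$ become nearly collinear (they differ, after normalisation by $\phi_j(0)$, only at order $\eta^2$). Concretely, for $N=d=2$ the quantity $\psh{\tilde p_i}{u_0}$ with $u_0(x)=1-\tfrac{Z_0}{2}|x|$ is the $i$-th coefficient of the best $\rho_\eta$-weighted $L^2$ fit of $u_0$ by $\tilde\phi_1,\tilde\phi_2$; fitting the $\cO(\eta)$ residual $-\tfrac{Z_0\eta}{4}(1-|x/\eta|)^2$ with functions whose $j$-dependent part is $\cO(\eta^2)$ forces coefficients of size $\eta^{-1}$, so $\widehat M_{i,0}\sim\eta^{-1}$. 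In the paper the analogous object, $\|(C_\eta G_\eta)^{-1}C_\eta\|$, genuinely blows up like $\eta^{2-2N}$, and the whole point of Lemmas \ref{lem:ApPhi}, \ref{lem:stab} and \ref{lem:C_eta} is that only the \emph{special} row combinations $(\cE^j\Phi(0))^T(C_\eta G_\eta)^{-1}C_\eta$ scale like $\eta^{-2j}$; your bound $\eta^{2j}|y_j|\le C\eta^{2N}$ needs exactly this directional cancellation, which cannot be waved through.

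The same defect affects your appeal to Proposition \ref{cond:invrs}: that proposition (and the standing assumption) only asserts invertibility of $(\psh{\tilde p_k}{\phi_\ell})$ for each fixed $\eta\in(0,\eta_0]$, and gives no uniform-in-$\eta$ bound on any inverse. So neither the claimed uniform bound on $\|\widehat M^{-1}\|$ via the factorisation of $A$, nor the uniform boundedness of the coefficients $c_i$ used for the case $j\ge N$, follows from it. Boundedness of $c_i$ is true but is itself a consequence of the stability analysis: one writes $\psh{\tilde p}{\tilde\psi}=\psh{A^{-1}p}{\psi}$, approximates $\psi_e$ by $\sum c_k\phi_k$ to order $x^{2N}$ (Lemma \ref{lem:approx}), and uses $\psh{A^{-1}p}{\phi_{i+1}}=e_i$ together with the $\cO(\eta^{2-2N})$ norm estimate of Lemma \ref{lem:stab} to control the remainder; the paper handles $j\ge N$ in exactly this way. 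To complete your argument you would have to prove, for your matrix $\widehat M$, the analogue of Lemmas \ref{lem:C_eta}--\ref{lem:stab} (structure of the rescaled pseudo wave functions, dual vectors $g_k$, transition matrix $\mathcal P$), at which point you have essentially reconstructed the paper's proof.
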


The proof of Proposition \ref{lem:der_jump_0} relies on the particular structure induced by the equations satisfied by $\psi$ and $\phi_i$. Locally around a Dirac potential, their singularities have the same behavior. More precisely, if we consider the even part $\psi_e$ of $\psi$, the best approximation of $\psi_e$ by $N$ eigenfunctions $\phi_i$ is of order $2N$. It is then possible to rewrite the singularity at $0$ of $\tilde{\psi}$ to make use of this approximation. 

\begin{prop}[$d$-th derivative jump at $\eta$]
\label{lem:der_jump_eta}
Let $N \in \N^*$ and $d \geq N$. There exists a constant $C$ independent of $\eta$ such that for $d \leq k \leq 2d-2$
$$
\forall \, 0 < \eta \leq \eta_0, \ \left| \left[\tilde{\psi}^{(k)} \right]_\eta \right| \leq \frac{C}{\eta^{k-1}}.
$$
\end{prop}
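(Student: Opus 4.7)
The plan is to isolate the single source of the jump at $\eta$ and then estimate it. Since $\psi$ and each $\phi_i$ are smooth on a neighbourhood of $\eta$, and the functions $(\phi_i^a - \tilde{\phi}_i^a)$ appearing in $T_a \tilde{\psi}$ are supported near $a$ (disjoint from a neighbourhood of $\eta$ for $\eta$ small), the only contribution to $[\tilde{\psi}^{(k)}]_\eta$ comes from $T_0 \tilde{\psi}$. Writing $R_i := \phi_i - \tilde{\phi}_i$, which is supported in $[-\eta,\eta]$, this gives
\[
[\tilde{\psi}^{(k)}]_\eta \;=\; \sum_{i=1}^N R_i^{(k)}(\eta) \, \langle \tilde{p}_i, \tilde{\psi}\rangle ,
\]
so it suffices to control both factors uniformly in $\eta$.

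For the first factor, the evenness of both $\phi_i$ and $\tilde{\phi}_i$ forces the matching conditions at $\eta$ to be equivalent to matching conditions at $-\eta$, so $\tilde{\phi}_i|_{[-\eta,\eta]}$ is the Hermite interpolant of $\phi_i$ at the double nodes $\pm\eta$ with multiplicities $d$ (automatically even of degree $\leq 2d-2$). Rescale to $[-1,1]$ by setting $s = x/\eta$ and $\tilde{R}_i(s) := R_i(\eta s)$. The classical Hermite interpolation error estimate on the fixed interval $[-1,1]$ (via the Peano-kernel representation of the linear functional $f \mapsto (f - \mathrm{Hermite}(f))^{(k)}(1)$) yields
\[
|\tilde{R}_i^{(k)}(1)| \;\leq\; C \,\bigl\|\partial_s^{2d}[\phi_i(\eta s)]\bigr\|_{L^\infty[-1,1]} \;\leq\; C\,\eta^{2d}\,\|\phi_i^{(2d)}\|_\infty ,
\]
with $C$ depending only on $d$ and $k$. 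Unscaling, $R_i^{(k)}(\eta) = \eta^{-k}\tilde{R}_i^{(k)}(1) = O(\eta^{2d-k})$ for every $d \leq k \leq 2d-2$.

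For the second factor, test $\psi = (\mathrm{Id}+T)\tilde{\psi}$ against each $\tilde{p}_j$. Using $\langle \tilde{p}_j, \tilde{\phi}_i\rangle = \delta_{ji}$ and the disjointness of the supports of $\tilde{p}_j$ (near $0$) and $\phi_i^a - \tilde{\phi}_i^a$ (near $a$), we obtain the linear system $M\vec{c} = \vec{w}$ where $c_i = \langle \tilde{p}_i, \tilde{\psi}\rangle$, $M_{ji} = \langle \tilde{p}_j, \phi_i\rangle$, and $w_j = \langle \tilde{p}_j, \psi\rangle$. The orthogonalization in the construction of $\tilde{p}_j$ gives $\|\tilde{p}_j\|_{L^\infty} = O(\eta^{-1})$ on a support of length $2\eta$, whence $\|\tilde{p}_j\|_{L^1} = O(1)$ and $|w_j| \leq \|\tilde{p}_j\|_{L^1}\|\psi\|_{L^\infty} = O(1)$. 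Moreover $M_{ji} - \delta_{ji} = \langle \tilde{p}_j, R_i\rangle = O(\eta^{2d})$ by the previous bound on $R_i$, so $M = \mathrm{Id} + O(\eta^{2d})$ is invertible with $\|M^{-1}\| = O(1)$, and therefore $|\langle \tilde{p}_i, \tilde{\psi}\rangle| = O(1)$ uniformly in $\eta$.

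Combining the two estimates yields $|[\tilde{\psi}^{(k)}]_\eta| \leq C\,\eta^{2d-k} \leq C\,\eta^{1-k}$ for $\eta \leq 1$ (since $2d - 1 \geq 0$), which is the stated bound (in fact sharper by a factor $\eta^{2d-1}$). The main obstacle is the uniform estimate $\|\tilde{p}_j\|_{L^\infty} = O(\eta^{-1})$: it rests on the uniform (in $\eta$) boundedness of the inverse of the rescaled Gram matrix $\eta^{-1}B$ on $[-1,1]$, which in turn requires the uniform linear independence of the rescaled pseudo wave functions on $[-1,1]$. This is closely tied to Proposition \ref{prop:tdphi_exist} and to the standing assumption stated before Section \ref{sec:results}.
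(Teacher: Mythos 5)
Your reduction of the jump to $[\tilde{\psi}^{(k)}]_\eta=\sum_{i=1}^N R_i^{(k)}(\eta)\,\psh{\tilde{p}_i}{\tilde{\psi}}$ with $R_i=\phi_i-\tilde{\phi}_i$ is fine and matches the paper's starting point, but the two quantitative estimates you build on it are both invalid. The central flaw is the Hermite-interpolation error bound $|R_i^{(k)}(\eta)|\leq C\eta^{2d-k}\|\phi_i^{(2d)}\|_\infty$: it requires $\phi_i\in C^{2d}$ on the whole interval $[-\eta,\eta]$ containing both interpolation nodes, whereas $\phi_i$ has a first-derivative jump at $0$ (this kink is precisely the singularity the VPAW transformation is meant to absorb), so the Peano-kernel argument does not apply. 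A direct check already kills the bound: for $d=2$, $k=2$ the matching conditions give $\tilde{\phi}_i''(\eta^-)=\phi_i'(\eta)/\eta$, and since $\phi_i'(0_+)=-\tfrac{Z_0}{2}\phi_i(0)\neq 0$, one gets $R_i''(\eta^-)\sim \eta^{-1}$, not $\cO(\eta^{2})$. Your "sharper" conclusion $|[\tilde{\psi}^{(k)}]_\eta|\leq C\eta^{2d-k}$ is also contradicted by the paper's numerics (Figure \ref{grph:der_jump}, Figure \ref{tab:der_jump}), which show the $d$-th derivative jump at $\pm\eta$ growing like $\eta^{-(d-1)}$; the stated bound $\eta^{1-k}$ is sharp.

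The second step is also unfounded. The uniform bound $\|\tilde{p}_j\|_{L^\infty}=\cO(\eta^{-1})$ would require the rescaled pseudo wave functions $\tilde{\phi}_i(\eta\,\cdot)$ to be uniformly linearly independent on $[-1,1]$, but they are not: as $\eta\to 0$ they all converge to the constants $\phi_i(0)$, so the rescaled Gram matrix degenerates and its inverse blows up like $\eta^{2-2N}$ (this is exactly what Lemmas \ref{lem:C_eta} and \ref{lem:stab} quantify; Proposition \ref{prop:tdphi_exist} gives independence for each fixed $\eta$, not uniformly). Likewise $\|R_i\|_{L^\infty[-\eta,\eta]}$ is only $\cO(\eta)$, so $M=\mathrm{Id}+\cO(\eta^{2d})$ does not hold. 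The conclusion $\psh{\tilde{p}_i}{\tilde{\psi}}=\cO(1)$ happens to be true, but it needs the paper's machinery (via $\psh{\tilde{p}}{\tilde{\psi}}=\psh{A^{-1}p}{\psi}$, Lemma \ref{lem:ApPhi}, Lemma \ref{lem:approx} and Lemma \ref{lem:stab}, one gets $\psh{\tilde{p}}{\tilde{\psi}}=c+\cO(\eta^2)$ with $c$ the coefficients of Lemma \ref{lem:approx}), not a perturbation-of-identity argument. In the paper the naive size of the jump is $\eta^{-k}$, and the single factor of $\eta$ that yields $\eta^{1-k}$ is gained by a cancellation: the columns of $C_\eta^{(P)}$ entering $C_\eta^{(P)}P^{(d)}(1)$ are combinations of $\eta^m\Phi^{(m)}(\eta)$ with $m\geq 1$ only, and the $e_0$-component of the resulting vector is $\cO(\eta)$ while the moment vector of $\psi$ is $\psi(0)e_0+\cO(\eta)$. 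Your argument contains no substitute for this structural cancellation, so the proof has a genuine gap.
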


The derivative jump of $\tilde{\psi}$ at $\eta$ is due to the lack of regularity of the pseudo wave functions $\tilde{\phi}_j$ at $\eta$. The latter can be written as rescaled polynomials $P_{2d-2}(\tfrac{x}{\eta})$ where $P_{2d-2}$ is of degree at most $2d-2$. If we suppose that the coefficients of $P_{2d-2}$ are uniformly bounded in $\eta$ and if the dependence on $\eta$ of the projector functions $\tilde{p}_k$ is neglected, by deriving $k$ times the polynomials $P_{2d-2}(\tfrac{x}{\eta})$, $k \geq d$, we can see why the derivative jump of $\tilde{\psi}$ at $\eta$ is expected to grow as $\eta^{-k}$. Tracking all the dependencies on $\eta$, we can in fact show that a factor $\eta$ can be gained, which is in full agreement with Figure \ref{grph:der_jump}.

Using Proposition \ref{prop:Fourier_decay} and classical estimates on eigenvalue approximations \cite{weinberger1974variational}, we have the following theorems.

\begin{theo}[Estimates on the Fourier coefficients]
\label{theo:FourierPsi}
Let $N \in \N^*$ and $d \geq N$. Let $\widehat{\tilde{\psi}}_m$ be the $m$-th Fourier coefficient of $\tilde{\psi}$. There exists a constant $C >0$ independent of $\eta$ and $m$ such that for all $0 < \eta \leq \eta_0$ and $m \geq \tfrac{1}{\eta}$
$$
\left| \widehat{\tilde{\psi}}_m \right| \leq C \left( \frac{\eta^{2N}}{m^2} + \frac{1}{\eta^{d-1}m^{d+1}} \right).
$$
\end{theo}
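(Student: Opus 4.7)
The plan is to combine the derivative-jump estimates of Propositions \ref{lem:der_jump_0} and \ref{lem:der_jump_eta} with the standard integration-by-parts formula for Fourier coefficients of piecewise smooth $1$-periodic functions (Proposition \ref{prop:Fourier_decay}). The function $\tilde\psi$ is $C^\infty$ on the complement of the finite set of jump points $\{0,a,\pm\eta, a\pm\eta\}$. At $0$ and $a$ only odd-order derivatives of $\tilde\psi$ can jump, because $\psi$ and the atomic wave functions $\phi_i$ satisfy a second-order ODE with a Dirac source, which forces the jumps to propagate only through odd orders, while the polynomial pseudo wave functions $\tilde\phi_i$ are smooth across $0$. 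At $\pm\eta$ and $a\pm\eta$ only derivatives of order $\geq d$ can jump, since by construction $\tilde\phi_i$ matches $\phi_i$ and its first $d-1$ derivatives there.

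First, I would integrate by parts sufficiently many times (e.g.\ $2d-1$) to obtain
$$
\widehat{\tilde\psi}_m = \sum_{k=1}^{2d-2} \frac{1}{(2\pi i m)^{k+1}} \sum_{\ell} \bigl[\tilde\psi^{(k)}\bigr]_{x_\ell} e^{-2\pi i m x_\ell} + \frac{1}{(2\pi i m)^{2d-1}} \widehat{\tilde\psi^{(2d-1)}}_m,
$$
where the last coefficient is taken pointwise (away from the jump set). Isolating the two dominant boundary contributions: the jump $[\tilde\psi']_0$ (and its counterpart at $a$) is bounded by $C\eta^{2N}$ by Proposition \ref{lem:der_jump_0} with $j=0$, producing the first announced term $C\eta^{2N}/m^2$; the jump $[\tilde\psi^{(d)}]_\eta$ (and counterparts) is bounded by $C/\eta^{d-1}$ by Proposition \ref{lem:der_jump_eta} with $k=d$, producing the second announced term $C/(\eta^{d-1}m^{d+1})$.

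Next I would check that every remaining boundary term is absorbed into these two, using exclusively the hypothesis $m\eta\geq 1$. For higher odd-order jumps at $0$, Proposition \ref{lem:der_jump_0} gives $|[\tilde\psi^{(2j+1)}]_0|/m^{2j+2}\leq C\eta^{2N-2j}/m^{2j+2} = C\eta^{2N}/(m^{2}(m\eta)^{2j})$ for $1\leq j\leq N-1$, and for $j\geq N$ the jump is $O(1)$ so $1/m^{2j+2}\leq \eta^{2N}/m^2$; every such contribution is $\leq C\eta^{2N}/m^2$. For higher-order jumps at $\pm\eta$, Proposition \ref{lem:der_jump_eta} gives $|[\tilde\psi^{(k)}]_\eta|/m^{k+1}\leq C/(\eta^{k-1}m^{k+1}) = C/(\eta^{d-1}m^{d+1}(\eta m)^{k-d})$ for $d\leq k\leq 2d-2$, again dominated by the second announced term.

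Finally I would control the remainder. Inside each ball $\tilde\psi$ equals a smooth piece (from $\psi$ and $\phi_i$) minus linear combinations of the polynomial pseudo wave functions $\tilde\phi_i$ of degree at most $2d-2$, whose $(2d-1)$-st derivative vanishes identically; outside the balls $\tilde\psi=\psi$ is piecewise smooth with derivatives bounded independently of $\eta$. Therefore $\tilde\psi^{(2d-1)}$ is bounded in $L^\infty$ uniformly in $\eta$, giving $|\widehat{\tilde\psi^{(2d-1)}}_m|\leq C$ and a remainder of order $1/m^{2d-1}$; since $m\geq 1/\eta$ implies $1/m^{2d-1}\leq \eta^{d-2}/m^{d+1}\leq 1/(\eta^{d-1}m^{d+1})$, the remainder is again absorbed. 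The only delicate point is the bookkeeping of the various powers of $\eta$ and $m$ in the intermediate boundary terms; once the hypothesis $m\eta\geq 1$ is used systematically, all terms collapse into the two announced contributions, and the coefficients $\langle\tilde p_i,\tilde\psi\rangle$ implicit in the formulas appear only through the uniform bounds already built into Propositions \ref{lem:der_jump_0} and \ref{lem:der_jump_eta}.
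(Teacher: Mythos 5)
Your overall route is the same as the paper's: integrate by parts (Proposition \ref{prop:Fourier_decay}), insert the jump estimates of Propositions \ref{lem:der_jump_0} and \ref{lem:der_jump_eta}, and absorb all intermediate boundary terms into the two announced ones using $m\eta\geq 1$; that part of your bookkeeping is correct. The gap is in the remainder term. You assert that $\tilde{\psi}^{(2d-1)}$ is bounded in $L^\infty$ uniformly in $\eta$, on the grounds that $\widetilde{\phi}_i^{(2d-1)}=0$ and that the coefficients $\psh{\tilde{p}_i}{\tilde{\psi}}$ ``appear only through the uniform bounds already built into'' the two jump propositions. That is not so: on $[-\eta,\eta]$ one has $\tilde{\psi}^{(2d-1)}=\psi^{(2d-1)}-\psh{\tilde{p}}{\tilde{\psi}}\cdot\Phi^{(2d-1)}$, and while $\Phi^{(2d-1)}$ is bounded independently of $\eta$, the projector coefficients are $\eta$-dependent and are nowhere controlled by the jump propositions. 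The bound actually established in the paper (via $\psh{\tilde{p}}{\tilde{\psi}}=\psh{A^{-1}p}{\psi}$, Lemma \ref{lem:ApPhi} and Lemma \ref{lem:stab}) is only $\|\psh{\tilde{p}}{\tilde{\psi}}\|=\cO(\eta^{2-2N})$, which blows up as $\eta\to 0$; the paper then exploits the factor $\eta$ coming from the length of the integration interval to get an $L^1$ bound $\cO(\eta^{3-2N})$ on this contribution, and absorbs the resulting term $\eta^{3-2N}m^{1-2d}$ into $\eta^{1-d}m^{-d-1}$ using precisely the hypotheses $d\geq N$ and $m\eta\geq 1$. Your argument skips this entirely, and with it the only place where $d\geq N$ is genuinely needed in this step.

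Your stronger claim is in fact salvageable, but it requires an argument you did not give: writing $\psh{A^{-1}p}{\psi}=\psh{A^{-1}p}{\psi_e-\sum_k c_k\phi_k}+(c_k)_k$ with the $\eta$-independent coefficients of Lemma \ref{lem:approx}, and using $\psh{A^{-1}p}{\phi_k}=e_k$ together with Lemmas \ref{lem:ApPhi} and \ref{lem:stab}, one gets $\psh{\tilde{p}}{\tilde{\psi}}=(c_k)_k+\cO(\eta^{2})$, hence uniform boundedness of the coefficients and of $\tilde{\psi}^{(2d-1)}$. Either supply this argument or follow the paper's cruder $\cO(\eta^{3-2N})$ estimate; as written, the remainder step is unsupported.
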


\begin{theo}[Estimates on the eigenvalues]
\label{theo:energies}
Let $N \in \N^*$ and $d \geq N$. Let $E_M^\eta$ be an eigenvalue of the variational approximation of \eqref{eq:H_PAW} in a basis of $M$ plane-waves and for a cut-off radius $0 < \eta \leq \eta_0$, and let $E$ be the corresponding exact eigenvalue. There exists a constant $C >0$ independent of $\eta$ and $M$ such that for all $0 < \eta \leq \eta_0$ and $M \geq \tfrac{1}{\eta}$
\begin{equation}
\label{eq:thm_eigenvalue}
0 < E_M^\eta - E \leq C \left( \frac{\eta^{4N}}{M} + \frac{1}{\eta^{2d-2}} \frac{1}{M^{2d-1}} \right).
\end{equation}
\end{theo}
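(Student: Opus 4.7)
The plan is to deduce Theorem \ref{theo:energies} from the Fourier decay bounds of Theorem \ref{theo:FourierPsi} by invoking the classical variational/Galerkin approximation theory for the symmetric generalized eigenproblem \eqref{eq:H_PAW}. Throughout I write $V_M \subset H^1_\mathrm{per}(0,1)$ for the span of the plane-waves $e^{2i\pi m x}$, $|m| \leq M$, and $\pi_M$ for the associated $L^2$-orthogonal projection.

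\textbf{Step 1 (Reformulation as a Galerkin approximation).} After shifting $H$ by a sufficiently large constant (which does not change the difference $E_M^\eta - E$), one can assume $\widetilde H$ is positive, so that the bilinear forms $a(u,v) = \psh{\widetilde H u}{v}$ and $s(u,v) = \psh{\widetilde S u}{v}$ are symmetric, continuous and coercive on $H^1_\mathrm{per}(0,1)$ and $L^2_\mathrm{per}(0,1)$ respectively. The approximation $E_M^\eta$ is precisely the Galerkin value obtained by restricting \eqref{eq:H_PAW} to $V_M$. The standard abstract theory (Weinberger, Babu\v{s}ka--Osborn) then gives
$$
0 < E_M^\eta - E \leq C_1 \, \inf_{v_M \in V_M} \frac{\|\tilde\psi - v_M\|_{\widetilde H}^2}{\|\tilde\psi\|_{\widetilde S}^2},
$$
with $\|u\|_{\widetilde H}^2 = a(u,u)$, $\|u\|_{\widetilde S}^2 = s(u,u)$. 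The strict positivity $E_M^\eta - E > 0$ is the usual Galerkin upper-bound property for the shifted positive problem.

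\textbf{Step 2 (Norm equivalences uniform in $\eta$).} I would show that $\mathrm{Id}+T$ is bounded on both $L^2_\mathrm{per}(0,1)$ and $H^1_\mathrm{per}(0,1)$ with operator norms controlled uniformly for $\eta \in (0,\eta_0]$. Each rank-one piece $(\phi_k - \widetilde\phi_k)\psh{\tilde p_k}{\bullet}$ is supported on an interval of length $2\eta$; the amplitudes of $\tilde p_k$ and of $\phi_k - \widetilde\phi_k$ scale as $\eta^{-1/2}$ and $\eta^{1/2}$ respectively, so the $L^2 \to L^2$ norm is $O(1)$ uniformly in $\eta$. A parallel analysis, after one differentiation and using the explicit polynomial matching construction of Section \ref{subsec:VPAW1D}, gives the analogous $H^1 \to H^1$ bound. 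Together with Proposition \ref{cond:invrs} (the operator is boundedly invertible under the standing assumption) this yields
$$
\|u\|_{\widetilde H} \leq C_2\, \|u\|_{H^1}, \qquad \|u\|_{\widetilde S} \geq c_2\, \|u\|_{L^2}
$$
uniformly in $\eta$, and hence
$$
E_M^\eta - E \;\leq\; C_3\, \inf_{v_M \in V_M} \|\tilde\psi - v_M\|_{H^1}^2.
$$

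\textbf{Step 3 (Fourier truncation, application of Theorem \ref{theo:FourierPsi}).} Choosing $v_M = \pi_M \tilde\psi$,
$$
\|\tilde\psi - \pi_M \tilde\psi\|_{H^1}^2 \;=\; \sum_{|m|>M} (1 + 4\pi^2 m^2)\,\bigl|\widehat{\tilde\psi}_m\bigr|^2.
$$
Since $M \geq 1/\eta$, Theorem \ref{theo:FourierPsi} applies to every frequency in the sum, so that $|\widehat{\tilde\psi}_m|^2 \leq 2C^2\bigl(\eta^{4N}/m^4 + 1/(\eta^{2d-2}m^{2d+2})\bigr)$. Comparing each tail sum with the corresponding integral then gives
$$
\sum_{|m|>M} m^2 \bigl|\widehat{\tilde\psi}_m\bigr|^2 \;\leq\; C_4 \left( \frac{\eta^{4N}}{M} + \frac{1}{\eta^{2d-2}\,M^{2d-1}} \right),
$$
which already dominates the lower-order ($m^0$) contribution. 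Combined with Step 2, this is exactly the inequality \eqref{eq:thm_eigenvalue}.

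\textbf{Main obstacle.} The only delicate point is Step 2: ensuring that the constants in the abstract variational estimate propagate into an $H^1$-best-approximation bound without $\eta$-dependent blow-up. Because the projector $\tilde p_k$ has pointwise amplitude $\sim \eta^{-1/2}$, one must carefully compensate via the $\eta$-narrow support and the explicit scaling of $\phi_k - \widetilde\phi_k$ coming from the polynomial matching conditions at $\pm\eta$; this is the analogue, at the operator level, of the bookkeeping on $\eta$-powers already carried out for the derivative jumps in Propositions \ref{lem:der_jump_0}--\ref{lem:der_jump_eta}. Steps 1 and 3, by contrast, are direct applications of classical eigenvalue-approximation theory and of Theorem \ref{theo:FourierPsi}.
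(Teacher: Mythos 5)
Your overall strategy is the same as the paper's: the paper applies the classical eigenvalue approximation result (Proposition \ref{prop:eig_error}) to $H$ with the trial functions $(\mathrm{Id}+T)\Pi_M\tilde\psi$ — which is just your Step 1 formulated on the equivalent Rayleigh--Ritz space $(\mathrm{Id}+T)V_M$ — and then concludes exactly as in your Step 3 from Theorem \ref{theo:FourierPsi}. The whole content of the proof therefore sits in your Step 2, and that is where your argument has a genuine gap. You propose to get the uniform $H^1$-bound on $T$ from an amplitude count: $\tilde p_k$ of size $\eta^{-1/2}$, $\phi_k-\widetilde\phi_k$ of size $\eta^{1/2}$, support of length $2\eta$. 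This scaling is not justified and is in fact false for $N\geq 2$ (and already too optimistic for $N=1$): the projectors are built by inverting the Gram-type matrix $B=\psh{p_i}{\tilde\phi_j}$, and on $[-\eta,\eta]$ the pseudo wave functions become nearly collinear as $\eta\to 0$, so this matrix degenerates and the individual dual functions $\tilde p_k$ blow up like a negative power of $\eta$ that grows with $N$ — this is quantified in the paper by $\|(C_\eta G_\eta)^{-1}C_\eta\|=\cO(\eta^{2-2N})$ (Lemma \ref{lem:stab}). A term-by-term bound on the rank-one pieces of $T$ therefore cannot give a bound uniform in $\eta$; the uniform estimate $\|Tf\|_{H^1_\mathrm{per}}\leq C\eta^{1/2}\|f\|_{H^1_\mathrm{per}}$ only emerges from cancellations between the projector coefficients and the vector $\Phi(x)-\widetilde\Phi(x)$, which the paper extracts by rewriting $Tf$ in the basis $P$ (Lemma \ref{lem:Tconvenient}), exploiting the triangular structure of $C_1^{-1}=\mathcal{P}K_1$ and the expansion of $\Phi$ near $0$ (Lemmas \ref{lem:C_eta}, \ref{lem:decomp}), and showing that the dangerous $e_0$-component is $\cO(\eta)$ (Lemmas \ref{lem:Tf_L2} and \ref{lem:Tf_prime}, then \ref{lem:normeT}).

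You do flag Step 2 as the ``main obstacle,'' which is the right instinct, but saying the bookkeeping is ``the analogue of Propositions \ref{lem:der_jump_0}--\ref{lem:der_jump_eta}'' is not a proof: those propositions control jump quantities of $\tilde\psi$ itself, whereas here one needs an operator bound valid for all $f\in H^1_\mathrm{per}(0,1)$, uniformly in $\eta$, and the mechanism (cancellation through the structured inverse of a degenerating Gram matrix) must be exhibited explicitly. Steps 1 and 3 are fine and coincide with the paper; also note for Step 2 that it suffices (and is what the paper proves) that $T$ be $H^1$-bounded uniformly — in fact with small norm $\cO(\eta^{1/2})$ — and that coercivity of $H$ is taken from \cite{cances2017discretization}, so no shift gymnastics is needed beyond what Proposition \ref{prop:eig_error} already assumes.
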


The first term has the same asymptotic decay in $M$ as the brute force discretization of the problem with the original Dirac potential. 
However the prefactor $\eta^{4N}$ can be made small by using a small cut-off radius $\eta$ and/or a large $N$. Doing so, we introduce another 
error term which decays as  $M^{1-2d}$, with a prefactor of order ${\eta}^{2-2d}$. 
A natural strategy would thus be to balance these two error terms. This allows one to choose the numerical parameters in a consistent way. 
The numerical tests in Section \ref{sec:numerics} suggest that the estimate \eqref{eq:thm_eigenvalue} is optimal.

\section{Proofs}
\label{sec:proof}

This section is organized as follows. First, we prove that the VPAW method is well defined. The remainder of the section is then devoted to the proofs of Theorems \ref{theo:FourierPsi} and \ref{theo:energies}. After estimating the decay of the Fourier coefficients of the pseudo wave function $\tilde{\psi}$, we will precisely characterize the singularities of the functions $\psi$ and $\phi_k$ in order to estimate the derivative jumps of $\tilde{\psi}$. 

\subsection{Well-posedness of the VPAW method}

\begin{proof}[Proof of Proposition \ref{prop:tdphi_exist}]
To prove the linear independence of the pseudo wave functions is equivalent to show that the matrix $(\phi_{j+1}^{(k)}(\eta))_{0 \leq j \leq N-1, 0 \leq k \leq d-1}$ is full rank. In fact, we will show that the submatrix $(\phi_{j+1}^{(k)}(\eta))_{0 \leq j,k \leq N-1}$ is invertible.
Using the expression of $\phi_{j+1}$, we have for $0 \leq j,k \leq N-1$, 
$$
\phi_{j+1}^{k}(\eta) = \begin{cases}
(-1)^{k/2} \omega_{j+1}^{k} \cos(\omega_{j+1} (\eta - \tfrac{1}{2})) & \text{if } k \text{ is even}, \\
(-1)^{(k+1)/2} \omega_{j+1}^{k} \sin(\omega_{j+1} (\eta - \tfrac{1}{2})) & \text{if } k \text{ is odd}
\end{cases}
$$
Let $A(\eta)$ the matrix defined by
$$
A(\eta) := \begin{pmatrix}
\\
\cos (\omega_{j+1}(\eta - \tfrac{1}{2})) & - \omega_{j+1} \sin (\omega_{j+1} (\eta - \tfrac{1}{2})) & - \omega_{j+1}^2 \cos (\omega_{j+1} (\eta - \tfrac{1}{2})) & \cdots \\
\\
\end{pmatrix}.
$$ 
The function $\eta \mapsto \det A(\eta)$ is complex analytic, thus if it is not identically equal to $0$, there exists an interval $(0,\eta_0)$ with $\eta_0 >0$ such that the matrix $A(\eta)$ is invertible. It suffices to show that there exists $\eta \in \C$ such that $A(\eta)$ is invertible. Let $\eta = -ix + \tfrac{1}{2}$, $x >0$. Then for $x$ large, we have $\cos(\omega_{j+1} x) \sim \tfrac{1}{2} e^{\omega_{j+1} x}$ and $\sin(\omega_{j+1} x) \sim \tfrac{1}{2} e^{\omega_{j+1} x}$, thus
\begin{align*}
A(-ix + \tfrac{1}{2}) & = \frac{1}{2} \begin{pmatrix} \\ e^{\omega_{j+1} x} & -\omega_{j+1} e^{\omega_{j+1} x} & -\omega_{j+1}^2 e^{\omega_{j+1} x} \cdots  \\ \\\end{pmatrix} +\varepsilon(x) ,
\end{align*}
where 
$$
\|\varepsilon(x)\| \ll \left\| \begin{pmatrix} \\ e^{\omega_{j+1} x} & -\omega_{j+1} e^{\omega_{j+1} x} & -\omega_{j+1}^2 e^{\omega_{j+1} x} \cdots \\ \\ \end{pmatrix} \right\|.
$$
We have 
$$
\begin{pmatrix} \\ e^{\omega_{j+1} x} & -\omega_{j+1} e^{\omega_{j+1} x} & -\omega_{j+1}^2 e^{\omega_{j+1} x} \cdots \\ \\ \end{pmatrix} = \begin{pmatrix}
e^{\omega_1 x} & & 0 \\
 & \ddots & \\
 0& & e^{\omega_{N} x}
\end{pmatrix} \begin{pmatrix}
1 & \omega_1 & & \omega_{1}^{N-1} \\
\vdots & \vdots &  \cdots & \vdots \\
1 & \omega_{N} & & \omega_{N}^{N-1}
\end{pmatrix}
$$
which is invertible because the phases $(\omega_j)_{1 \leq j \leq N}$ are pairwise distinct. Hence $A(-ix + \tfrac{1}{2})$ is invertible for $x$ large enough. 
\end{proof}

\begin{proof}[Proof of Proposition \ref{cond:invrs}]
As $T$ is a finite rank and thus compact operator, proving the statement is equivalent to show that $\mathrm{Ker}(\mathrm{Id}+T) = \{0\}$. First, suppose that the matrix $(\psh{\tilde{p}_k}{\phi_i})_{k,i}$ is invertible and let $f \in \mathrm{Ker}(\mathrm{Id}+T)$. We have
\begin{equation}
\sum_{i=1}^{N} \psh{\tilde{p}_i}{f} (\phi_{i} - \tilde{\phi_i}) + f = 0
\label{eq:pfcond}
\end{equation}
Since $\phi_i - \tilde{\phi_i}$ is supported in $[-\eta, \eta]$ we also have $\mathrm{supp}(f) \subset [-\eta, \eta]$.

By multiplying each side of Equation (\ref{eq:pfcond}) by $\tilde{p}_k$, $1 \leq k \leq N$ and integrating on $[-\frac{1}{2},\frac{1}{2}]$, we obtain:
$$
0  = \sum_{i=1}^{N} \psh{\tilde{p}_i}{f} \psh{\tilde{p}_k}{\phi_{i} - \tilde{\phi_i}} + \psh{\tilde{p}_k}{f} = \sum_{i=1}^{N} \psh{\tilde{p}_i}{f} ( \psh{\tilde{p}_k}{\phi_i} - \underbrace{\psh{\tilde{p}_k}{\tilde{\phi_i}}}_{=\delta_{ki}}) + \psh{\tilde{p}_k}{f} \ , 
$$
so that
$$
\forall 1 \leq k \leq N, \ 0 = \sum_{i=1}^{N} \psh{\tilde{p}_i}{f} \psh{\tilde{p}_k}{\phi_i}.
 $$
Since we assumed that the matrix $(\psh{\tilde{p}_k}{\phi_i})_{k,i}$ is invertible,
$$
\forall \, 1 \leq i \leq N, \ \psh{\tilde{p}_i}{f} = 0 \ .
$$
Going back to (\ref{eq:pfcond}), this implies $f=0$ and $\mathrm{Id}+T$ is invertible.

Now we suppose that the matrix $(\psh{\tilde{p}_k}{\phi_i})_{k,i}$ is not invertible. Thus there is $(\alpha_i)_{1 \leq i \leq N}$ such that
$$
\forall 1 \leq i \leq N, \ \sumlim{j=1}{N} \alpha_j \psh{\tilde{p}_i}{\phi_j} = 0.
$$
Let $f(x) = \sumlim{j=1}{N} \alpha_j (\phi_j - \tilde{\phi}_j)$. Then
\begin{align*}
(\mathrm{Id}+T)f & = \sumlim{j=1}{N} \alpha_j (\phi_j - \tilde{\phi}_j) + \sumlim{i=1}{N} \sumlim{j=1}{N} \alpha_j \psh{\tilde{p}_i}{\phi_j - \tilde{\phi}_j}(\phi_j - \tilde{\phi}_j) \\
& = \sumlim{i=1}{N} \sumlim{j=1}{N} \alpha_j \psh{\tilde{p}_i}{\phi_j}(\phi_j - \tilde{\phi}_j) \\
& = 0
\end{align*} 
Thus $\mathrm{Ker}(I+T) \not= \lbrace 0 \rbrace$ and $(I+T)$ is not invertible.
\end{proof}

\subsection{Structure and approximation lemmas}

A key intermediate result in our study is the estimation of the decay of the Fourier coefficients of $\tilde{\psi}$ as a function of its derivative jumps.

\begin{prop}
\label{prop:Fourier_decay}
Let $N \in \N^*$ and $d \geq N$. Let $\widehat{\tilde{\psi}}_m$ be the $m$-th Fourier coefficient of $\tilde{\psi}$. Then
\begin{equation*}
\begin{split}
\widehat{\tilde{\psi}}_m =& \sumlim{j=0}{\lfloor \frac{d}{2} \rfloor -1} \frac{1}{(2i\pi m)^{2+2j}} \left[\tilde{\psi}^{(2j+1)}\right]_0 + \sumlim{k=d}{2d-2} \frac{ e^{\mp 2i \pi m \eta}}{(2i\pi m)^{k+1}} \left[\tilde{\psi}^{(k)} \right]_{\pm\eta}   \\
 &+ e^{-2i \pi m a} \left( \sumlim{j=0}{\lfloor \frac{d}{2} \rfloor -1} \frac{1}{(2i\pi m)^{2+2j}} \left[\tilde{\psi}^{(2j+1)}\right]_a + \sumlim{k=d}{2d-2} \frac{ e^{\mp 2i \pi m \eta}}{(2i\pi m)^{k+1}} \left[\tilde{\psi}^{(k)} \right]_{a\pm\eta}  \right) \\
 &+ \frac{1}{(2i\pi m)^{2d-1}} \itg{0}{1}{\tilde{\psi}^{(2d-1)}(x)e^{-2i\pi mx}}{x}.
\end{split}
\end{equation*}
\end{prop}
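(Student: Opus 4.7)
The plan is to prove the identity by iterated integration by parts against $e^{-2i\pi m x}$, combined with a preliminary analysis of the singular structure of $\tilde{\psi}$.

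First I would record the structure of $\tilde{\psi}$. From the relation $\tilde{\psi}=\psi-\sum_{i=1}^{N}\psh{\tilde{p}_i}{\tilde{\psi}}(\phi_i-\tilde{\phi}_i)-\sum_{i=1}^{N}\psh{\tilde{p}_i^a}{\tilde{\psi}}(\phi_i^a-\tilde{\phi}_i^a)$, together with the explicit construction of the PAW functions described in Section~\ref{subsec:VPAW1D}, one sees that $\tilde{\psi}\in H^1_{\mathrm{per}}(0,1)$, is continuous, and is piecewise analytic on the complement of the singular set $\{0,\pm\eta,a,a\pm\eta\}+\Z$. Moreover $\tilde{\psi}$ is $C^{d-1}$ at the points $\pm\eta$ and $a\pm\eta$ by the matching conditions defining $\tilde{\phi}_i$ and $\tilde{\phi}_i^a$, so jumps at these four points can occur only in derivatives of order $\ge d$. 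Finally, because $\psi$, $\phi_i$ and $\phi_i^a$ all satisfy an ODE of the form $-u''=\lambda u$ on each side of their singular point, every even-order derivative of $\tilde{\psi}$ is continuous at $0$ and at $a$; consequently jumps of $\tilde{\psi}$ at $0$ and $a$ only appear in odd-order derivatives.

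Next I would apply the basic jump-aware integration by parts: for any piecewise $C^1$, $1$-periodic function $g$ with a finite set of jump points $\{x_j\}$, splitting the integral and integrating by parts on each smooth piece gives, after cancellation using periodicity,
\begin{equation*}
\int_0^1 g(x) e^{-2i\pi m x}\,\mathrm{d}x \;=\; \frac{1}{2i\pi m}\sum_j [g]_{x_j} e^{-2i\pi m x_j} \;+\; \frac{1}{2i\pi m}\int_0^1 \{g'\}(x) e^{-2i\pi m x}\,\mathrm{d}x,
\end{equation*}
where $\{g'\}$ denotes the pointwise derivative. I would apply this identity iteratively, starting from $g=\tilde{\psi}$ (which has no jumps) and then $g=\tilde{\psi}',\tilde{\psi}'',\ldots,\tilde{\psi}^{(2d-2)}$. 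At each step the set of jump points is determined by the structural analysis above: only $\{0,a\}$ and only for odd derivative orders as long as the order is strictly less than $d$, and the full set $\{0,\pm\eta,a,a\pm\eta\}$ once the order reaches $d$. After $2d-1$ iterations the accumulated prefactor in front of the remaining integral is $(2i\pi m)^{-(2d-1)}$ and the integrand is $\tilde{\psi}^{(2d-1)}$, which lies in $L^1(0,1)$ since it is piecewise analytic on finitely many intervals.

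The main work is then bookkeeping: collecting at each power of $1/(2i\pi m)$ the boundary contributions produced by the jumps of the corresponding derivative, and grouping them by their location (near $0$ versus near $a$, which differ by the overall phase $e^{-2i\pi m a}$, and with exponential factors $e^{\mp 2i\pi m\eta}$ attached to the boundary contributions at $\pm\eta$ and $a\pm\eta$). The claimed identity follows by reading off the surviving terms in the resulting telescoping sum. The main difficulty is not analytical but combinatorial: one has to keep precise track of which jumps are present at each order (odd versus even, and whether the threshold $k\ge d$ has been reached) so that the two sums in the statement can be assembled in the stated form. A clean way to organise the calculation is a finite induction on the number of integrations by parts performed, updating at each step the accumulated boundary terms together with the index of the remainder integral.
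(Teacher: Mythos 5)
Your route is the paper's route: the published proof is a one-line appeal to the definition of the Fourier coefficients and integration by parts, and your jump-aware integration-by-parts identity, iterated $2d-1$ times, together with your structural observations (continuity of $\tilde{\psi}$, $C^{d-1}$ matching at $\pm\eta$ and $a\pm\eta$, only odd-order jumps at $0$ and $a$), is exactly how that one-liner is meant to be carried out.

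There is, however, a genuine mismatch in your final bookkeeping step. By your own (correct) structural analysis, once the derivative order reaches $d$ the jump set is the \emph{full} set $\{0,\pm\eta,a,a\pm\eta\}$: the points $0$ and $a$ do not stop carrying jumps at order $d$; they carry jumps at every odd order, and these are generically nonzero (Proposition \ref{lem:der_jump_0} only bounds $[\tilde{\psi}^{(2j+1)}]_0$ by a constant for $j\geq N$, it does not make it vanish). Hence the iterated integration by parts produces, in addition to the terms displayed in Proposition \ref{prop:Fourier_decay}, the contributions $(2i\pi m)^{-(2j+2)}[\tilde{\psi}^{(2j+1)}]_0$ and $e^{-2i\pi m a}(2i\pi m)^{-(2j+2)}[\tilde{\psi}^{(2j+1)}]_a$ for $\lfloor d/2\rfloor \leq j \leq d-2$, a nonempty range as soon as $d\geq 3$. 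If your sentence ``only for odd derivative orders as long as the order is strictly less than $d$'' was meant to assert that $0$ and $a$ carry no jumps of order $\geq d$, that assertion is false; if it was not, then ``the claimed identity follows by reading off the surviving terms'' is not accurate, because these extra terms survive and do not appear on the stated right-hand side. To close the argument you must either keep them explicitly (i.e.\ let the sums attached to $0$ and $a$ run up to $j=d-2$; this is harmless for the sequel, since for $m\geq 1/\eta$ each such term is $O(m^{-(d+1)})$ and is therefore dominated by the $\eta^{1-d}m^{-(d+1)}$ contribution already present in Theorem \ref{theo:FourierPsi}), or prove that they vanish, which they do not in general.
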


\begin{proof}
This result follows from the definition of the Fourier coefficients and integration by parts. 
\end{proof}

In view of the Proposition \ref{prop:Fourier_decay}, the decay of the Fourier coefficients can be inferred from the derivative jumps of $\tilde{\psi}$ according to the VPAW parameters. The singularities of $\tilde{\psi}$ at integer values are caused by the singularity of the functions $\psi$ and $\phi_k$. Thus, to get an accurate characterization of the singularities of $\tilde{\psi}$, we need to precisely know how the functions $\psi$ and $\phi_k$ behave in a neighborhood of their singularities.

\begin{lem}[Structure lemma]
\label{lem:decomp}
Let $\psi$ be an eigenfunction (\ref{eq:H_mol}) associated to the eigenvalue $E$. Then in a neighborhood of $0$, we have the following expansion :
\begin{multline}
\psi(x) = \psi(0) \left( \sumlim{j=0}{k} \frac{(-E)^j}{(2j)!} x^{2j} - \frac{Z_0}{2} \sumlim{j=0}{k} \frac{(-E)^j}{(2j+1)!} |x|^{2j+1} \right) \\
+ \frac{\psi'(0_+) + \psi'(0_-)}{2} \sumlim{j=0}{k} \frac{(-E)^j}{(2j+1)!} x^{2j+1} + \psi_{2k+2}(x),
\end{multline}
where $\psi_{2k+2}$ is a $C^{2k+2}$ function satisfying in a neighbourhood of $0$,
$$
\begin{cases}
\psi_{2k+2}^{(2k+2)} = (-E)^{k+1} \psi , \\
|\psi_{2k+2}(x)| \leq C \frac{|-E|^{k+1}}{(2k+2)!} |x|^{2k+2}.
\end{cases}
$$
\end{lem}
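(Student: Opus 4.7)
The key observation is that in a small neighborhood $(-\eta,\eta)$ of $0$ containing no other Dirac mass, $\psi$ is smooth on each of $(-\eta,0)$ and $(0,\eta)$ and satisfies there the smooth ODE $\psi''=-E\psi$, while the distributional identity $H\psi=E\psi$ forces the jump relation $\psi'(0_+)-\psi'(0_-)=-Z_0\psi(0)$. The plan is to Taylor-expand $\psi$ on each side of $0$ using the ODE, then glue the two one-sided expansions using the jump relation.

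On $(0,\eta)$, iterating $\psi''=-E\psi$ gives $\psi^{(2j)}(0_+)=(-E)^j\psi(0)$ and $\psi^{(2j+1)}(0_+)=(-E)^j\psi'(0_+)$, and Taylor's formula with integral remainder yields, for $x\in[0,\eta)$,
\[
\psi(x)=\psi(0)\sum_{j=0}^{k}\frac{(-E)^j}{(2j)!}x^{2j}+\psi'(0_+)\sum_{j=0}^{k}\frac{(-E)^j}{(2j+1)!}x^{2j+1}+R_+(x),
\]
with $R_+(x)=(-E)^{k+1}\int_0^x\frac{(x-t)^{2k+1}}{(2k+1)!}\psi(t)\,\mathrm{d}t$. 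A symmetric expansion holds on $(-\eta,0]$ with $\psi'(0_-)$ in place of $\psi'(0_+)$ and remainder $R_-(x)$. Next I would decompose $\psi'(0_\pm)=\frac{\psi'(0_+)+\psi'(0_-)}{2}\mp\frac{Z_0}{2}\psi(0)$ via the jump relation and substitute. On the negative side the identity $x^{2j+1}=-|x|^{2j+1}$ reverses the sign of the $\mp\frac{Z_0}{2}\psi(0)$ contribution, so both sides produce the same term $-\frac{Z_0}{2}\psi(0)|x|^{2j+1}$, and the two one-sided formulas coalesce into the single expression stated in the lemma, with $\psi_{2k+2}$ defined as $R_+$ on the right and $R_-$ on the left.

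The three claimed properties of $\psi_{2k+2}$ then follow directly. Subtracting a polynomial of degree $2k+1$ preserves the $(2k+2)$-nd derivative, so $\psi_{2k+2}^{(2k+2)}=\psi^{(2k+2)}=(-E)^{k+1}\psi$ on each side of $0$; the two one-sided limits at $0$ agree (both equal $(-E)^{k+1}\psi(0)$) and the lower-order one-sided derivatives at $0$ vanish by construction of the Taylor polynomials, so $\psi_{2k+2}\in C^{2k+2}$ near $0$; the pointwise bound is immediate from the integral form of the remainder with $C=\sup_{[-\eta,\eta]}|\psi|$. The only delicate point is the sign bookkeeping when merging the two one-sided expansions; once the jump relation has been substituted, the rest is essentially a mechanical computation.
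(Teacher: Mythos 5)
Your argument is correct, and it reaches the lemma by a genuinely different route than the paper. You work directly: on each side of $0$ the eigenfunction solves $\psi''=-E\psi$ classically, so a one-sided Taylor expansion of order $2k+1$ with integral remainder gives the even/odd coefficients $(-E)^j\psi(0)$ and $(-E)^j\psi'(0_\pm)$, and substituting $\psi'(0_\pm)=\frac{\psi'(0_+)+\psi'(0_-)}{2}\mp\frac{Z_0}{2}\psi(0)$ (the jump relation coming from the Dirac mass) merges the two half-line formulas into the single expression with $|x|^{2j+1}$; your sign bookkeeping on the negative side is right, and the explicit remainder $R_\pm(x)=(-E)^{k+1}\int_0^x\frac{(x-t)^{2k+1}}{(2k+1)!}\psi(t)\,\mathrm{d}t$ immediately yields the vanishing of the one-sided derivatives up to order $2k+1$, the matching limits of the $(2k+2)$-nd derivative, hence the $C^{2k+2}$ gluing, and the bound with $C=\sup_{[-\eta,\eta]}|\psi|$. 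The paper instead argues by induction on $k$, regularizing step by step: it adds the singular correction $(-E)^k\frac{|x|}{2}[\psi']_0\frac{x^{2k}}{(2k+1)!}$ to $\psi_{2k}$ so that the delta contribution disappears from the $(2k+2)$-nd distributional derivative of the corrected function $\theta_{2k+2}$, then subtracts its Taylor terms at $0$. Your version is more elementary and gives the remainder and the constant explicitly in closed form, which makes the regularity and the estimate one-liners; the paper's inductive scheme is less explicit but is the one that transfers verbatim to the perturbed Hamiltonian of Section 5, where the iterated primitives $\idotsint W\psi$ appear naturally as the extra correction in the initialization step (Lemma 5.1), whereas with your approach the non-constant-coefficient ODE would no longer iterate so cleanly. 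The only point you should state rather than gloss over is the standard gluing fact you invoke: a function that is $C^{2k+2}$ up to $0$ on each side, with all one-sided derivatives through order $2k+1$ agreeing at $0$ and the $(2k+2)$-nd one-sided derivatives having a common limit, is $C^{2k+2}$ across $0$; this is classical and poses no difficulty here.
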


\begin{proof}
This lemma is proved by induction.  

\paragraph*{Initialization}
For $k = 0$, let 
$$
\theta_2(x) = \psi(x) + Z_0 \frac{|x|}{2} \psi(0) .
$$
We differentiate $\theta_2$ twice:
\begin{equation}
\label{eq:theta2}
\theta_2''(x) = \psi''(x) - [\psi']_0 \delta_0  = -E \psi(x) , \quad \text{on } (-\tfrac{1}{2},\tfrac{1}{2}).
\end{equation}
The function $\psi$ being continuous, $\theta_2$ is $C^2$ in a neighborhood of $0$. Moreover,
$$
\theta_2(0) = \psi(0),
$$
and
$$
\theta_2'(x) = \psi'(x) - \frac{\mathrm{sign}(x)}{2}[\psi']_0 .
$$
When $x$ tends to $0_+$ or $0_-$, we obtain the same expression: 
$$
\theta_2'(0) = \frac{\psi'(0_+) + \psi'(0_-)}{2}.
$$
Setting
$$
\psi_2(x) = \theta_2(x) - \psi(0) - \frac{\psi'(0_+) + \psi'(0_-)}{2} x ,
$$
the statement is true for $k=0$.

\paragraph{Inductive step}
Suppose the statement is true for $k-1$. Then, we have in a neighbourhood of 0,
$$
\psi_{2k}^{(2k)}(x) = (-E)^{k} \psi(x) .
$$
Let
$$
\theta_{2k+2}(x) = \psi_{2k}(x) - (-E)^{k} \frac{|x|}{2} [\psi']_0 \frac{x^{2k}}{(2k+1)!} .
$$
Then
$$
\theta_{2k+2}^{(2k)}(x) = (-E)^{k} \left( \psi(x) - \frac{|x|}{2} [\psi']_0 \right) ,
$$
so that in view of \eqref{eq:theta2},
\begin{align*}
\theta_{2k+2}^{(2k+2)}(x) & = (-E)^k \left(\psi''(x) - [ \psi' ]_0 \delta_0 \right) = (-E)^{k+1} \psi(x),
\end{align*}
in the neighbourhood of 0.

So $\theta_{2k+2}$ is a $C^{2k+2}$ function in a neighbourhood of 0 and we have:
\begin{multline*}
\psi(x) = \psi(0) \left( \sumlim{j=0}{k-1} \frac{(-E)^j}{(2j)!} x^{2j} - Z_0 \sumlim{j=0}{k} \frac{(-E)^j}{(2j+1)!} \frac{|x|^{2j+1}}{2} \right) \\
+ \sumlim{j=0}{k-1} \frac{(-E)^j}{(2j+1)!} \frac{\psi'(0_+) + \psi'(0_-)}{2} x^{2j+1} + \theta_{2k+2}(x)  .
\end{multline*}
Il suffices to evaluate $\theta_{2k+2}^{(2k)}(0)$ and $\theta_{2k+2}^{(2k+1)}(0)$ to conclude the proof. We have
\begin{equation*}
\theta_{2k+2}^{(2k)}(0) = (-E)^{k} \psi(0) \ ,
\end{equation*}
and
\begin{equation*}
\theta_{2k+2}^{(2k+1)}(x) = (-E)^{k} \left( \psi'(x) - \frac{\mathrm{sign}(x)}{2} [\psi']_0 \right) \ ,
\end{equation*}
so if $x$ tends to $0$, we have :
\begin{equation*}
\theta_{2k+2}^{(2k+1)}(0) = \frac{\psi'(0_+) + \psi'(0_-)}{2} (-E)^k .
\end{equation*}
Define
\begin{equation*}
\psi_{2k+2}(x) = \theta_{2k+2}(x) - \psi(0) \frac{x^{2k}}{(2k)!}  - \frac{\psi'(0_+) + \psi'(0_-)}{2} \frac{x^{2k+1}}{(2k+1)!} \ ,
\end{equation*}
and the induction is proved. 
\end{proof}

Let $\psi_e$ be the even part of $\psi$. We have in a neighbourhood of $0$
\begin{equation}
\label{eq:psi_e}
\psi_e(x) = \psi(0) \sumlim{k=0}{N-1} \left( \frac{x^{2k}}{(2k)!}  - \frac{Z_0}{2} \frac{|x|^{2k+1}}{(2k+1)!} \right) (-E)^k + \frac{1}{2}(\psi_{2N}(x) + \psi_{2N}(-x)).
\end{equation}

\begin{lem}[Approximation]
\label{lem:approx}
There exist constants $(c_j)_{1 \leq j \leq N} \in \mathbb{R}^N$ satisfying 
\begin{equation*}
\psi_e(x) = \sumlim{j=1}{N} c_j \phi_j(x) + \cO\left(x^{2N}\right), \qquad \text{as } x \to 0.
\end{equation*}
\end{lem}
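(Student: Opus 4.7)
The plan is to reduce the approximation claim to a small Vandermonde-type linear system, using the structure lemma (Lemma \ref{lem:decomp}) as the main ingredient. Since each $\phi_j$ is an eigenfunction of $H_0 = -\mathrm{d}^2/\mathrm{d}x^2 - Z_0 \sum_{k\in\Z} \delta_k$ with eigenvalue $\epsilon_j$, and is even (so that $\phi_j'(0_+) + \phi_j'(0_-) = 0$), Lemma \ref{lem:decomp} applied to $\phi_j$ yields, in a neighbourhood of $0$,
$$
\phi_j(x) = \phi_j(0)\sum_{k=0}^{N-1} (-\epsilon_j)^k\, u_k(x) + O(x^{2N}), \qquad u_k(x) := \frac{x^{2k}}{(2k)!} - \frac{Z_0}{2}\frac{|x|^{2k+1}}{(2k+1)!}.
$$
The analogous expansion for $\psi_e$ is already provided by \eqref{eq:psi_e}: it has the same $(u_k)$ basis, with coefficients $\psi(0)(-E)^k$ instead of $\phi_j(0)(-\epsilon_j)^k$, plus a remainder of order $O(x^{2N})$.

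It therefore suffices to find scalars $(c_j)_{1\le j\le N}$ such that
$$
\sum_{j=1}^N c_j\, \phi_j(0)\,(-\epsilon_j)^k = \psi(0)\,(-E)^k, \qquad 0 \le k \le N-1,
$$
because then $\psi_e - \sum_j c_j \phi_j$ collapses to the sum of the remainders, which is $O(x^{2N})$. This linear system has coefficient matrix $M_{kj} = \phi_j(0)(-\epsilon_j)^k$, which factors as a Vandermonde matrix $V_{kj} = (-\epsilon_j)^k$ times the diagonal matrix $\mathrm{diag}(\phi_1(0),\dots,\phi_N(0))$. Hence $M$ is invertible as soon as the $(\epsilon_j)_{1\le j\le N}$ are pairwise distinct and $\phi_j(0)\neq 0$ for each $j$.

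The main obstacle is therefore to verify these two non-degeneracy facts, but both follow easily from the explicit form of the atomic eigenfunctions. Pairwise distinctness of the $\epsilon_j$ comes from the explicit formula $\phi_j(x) = \cos(\omega_j(x-\tfrac12))$ combined with the fact that the even eigenvalues of $H_0$ are the simple roots of the corresponding transcendental characteristic equation. For the non-vanishing of $\phi_j(0)$: if $\phi_j(0)=0$, then the jump relation $[\phi_j']_0 = -Z_0\phi_j(0)$ gives $[\phi_j']_0 = 0$, so $\phi_j$ extends to a smooth $1$-periodic solution of $-\phi_j'' = \epsilon_j \phi_j$ on $\R$; evenness kills the $\sin$ component, and $\phi_j(0)=0$ kills the $\cos$ component, contradicting $\phi_j\neq 0$. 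Once these two facts are secured, inverting $M$ yields the desired coefficients $(c_j)$, and the identity $\psi_e(x) = \sum_{j=1}^N c_j \phi_j(x) + O(x^{2N})$ follows immediately from the two expansions above.
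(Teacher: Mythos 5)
Your proposal is correct and follows essentially the same route as the paper: apply Lemma \ref{lem:decomp} to each $\phi_j$, compare with the expansion \eqref{eq:psi_e} of $\psi_e$, and reduce the claim to the Vandermonde system $\sum_{j=1}^N c_j\phi_j(0)(-\epsilon_j)^k=\psi(0)(-E)^k$, $0\le k\le N-1$. The only difference is that you explicitly justify the two non-degeneracy facts ($\epsilon_j$ pairwise distinct, $\phi_j(0)\neq 0$) that the paper simply asserts as consequences of its choice of the non-smooth even eigenfunctions, which is a harmless (and welcome) addition.
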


\begin{proof}
By Lemma \ref{lem:decomp} applied to $\phi_j$, we have
\begin{equation*}
\phi_j(x) = \phi_j(0) \left( \sumlim{k=0}{N-1} \frac{(-\epsilon_j)^k}{(2k)!} x^{2k} - \frac{Z_0}{2} \sumlim{k=0}{N-1} \frac{(-\epsilon_j)^k}{(2k+1)!} |x|^{2k+1} \right) + \phi_{j,2N}(x) \ ,
\end{equation*}
where
\begin{equation*}
\phi_{j,2N}(x) = \cO(x^{2N}).
\end{equation*}
So
\begin{multline*}
\psi_e(x) - \sumlim{j=1}{N} c_j \phi_j(x) = \sumlim{k=0}{N-1} \left( \psi(0) (-E)^k - \sumlim{j=1}{N} (-\epsilon_j)^k c_j \phi_j(0) \right) \frac{x^{2k}}{(2k)!} \\
- \frac{Z_0}{2} \sumlim{k=0}{N-1} \left( \psi(0) (-E)^k - \sumlim{j=1}{N} (-\epsilon_j)^k c_j \phi_j(0) \right) \frac{|x|^{2k+1}}{(2k+1)!} + \psi_{2N}(x) - \sumlim{j=1}{N} c_j \phi_{j,2N}(x).
\end{multline*}
To prove the lemma, it remains to show that there exist coefficients $(c_j)$ such that : 
\begin{equation*}
\forall \, 0 \leq k \leq N-1, \,  \sumlim{j=1}{N} (-\epsilon_j)^k c_j \phi_j(0) = \psi(0) (-E)^k \ .
\end{equation*}
We have chosen the functions $\phi_j$ so that $\phi_j(0) \not= 0$. By defining $\alpha_j = c_j \phi_j(0)$, we recognize a Vandermonde linear system. The eigenvalues $\epsilon_j$ are all different so the system is invertible and the lemma is proved. 
\end{proof}

\subsection{Derivative jumps at $0$}

Recall $p_i(t) = \rho_\eta(t) \tilde{\phi}_i(t)$. We will introduce some notation used in the next proofs:
\begin{align*}
p(t) & := (p_1(t), \dots, p_{N}(t))^T \in \R^N, \\
\psh{\tilde{p}}{\tilde{\psi}} & := (\psh{\tilde{p}_1}{\tilde{\psi}}, \dots, \psh{\tilde{p}_{N}}{\tilde{\psi}})^T \in \R^N, \\
\psh{\tilde{p}}{\psi} & := (\psh{\tilde{p}_1}{\psi}, \dots, \psh{\tilde{p}_{N}}{\psi})^T \in \R^N, \\
\Phi(t) & := (\phi_1(t), \dots, \phi_{N}(t))^T \in \R^N, \\
\widetilde{\Phi}(t) & := (\tilde{\phi}_1(t), \dots, \tilde{\phi}_{N}(t))^T \in \R^N, \\
B & := ( \langle p_i, \tilde{\phi_j} \rangle)_{1 \leq i,j \leq N} \in \R^{N \times N}, \\
\widetilde{A} & :=  ( \langle \tilde{p}_i, \phi_j \rangle)_{1 \leq i,j \leq N} \in \R^{N \times N}, \\
A & := ( \langle p_i, \phi_j \rangle)_{1 \leq i,j \leq N} = B\widetilde{A} \in \R^{N \times N}.
\end{align*}

\begin{lem}
\label{lem:der_jump}
Let $(c_k)_{1 \leq k \leq N}$ be any vector of $\mathbb{R}^N$ and $j \in \mathbb{N}$. Then,
\begin{equation*}
[\tilde{\psi}^{(2j+1)}]_0 = -Z_0 \left( (-E)^j \psi(0) - \sumlim{k=1}{N} c_k (-\epsilon_k)^j \phi_k(0) - \psh{A^{-1}p}{\psi - \sumlim{k=1}{N} c_k \phi_k} \cdot \cE^j \Phi(0) \right) .
\end{equation*}
where $\cE$ is the diagonal matrix 
\begin{equation*}
\cE =
\begin{pmatrix}
- \epsilon_1 &  0 & \ldots & 0 \\
0 & - \epsilon_2 & \ldots & \vdots \\
\vdots & \vdots & \ddots & 0 \\
0  &   \ldots  & 0  & - \epsilon_{N}
\end{pmatrix}.
\end{equation*}
\end{lem}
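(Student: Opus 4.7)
The plan is to start from $\tilde\psi = \psi - T_0\tilde\psi - T_a\tilde\psi$ (equivalent to $\psi=(\mathrm{Id}+T)\tilde\psi$) and to identify the derivative jump at $0$ term by term. Under the assumption $\eta<\min(a/2,(1-a)/2)$, the set $\bigcup_{k\in\Z}[a-\eta+k,a+\eta+k]$ does not meet a neighborhood of $0$, so $T_a\tilde\psi$ vanishes there and makes no contribution. Since each $\tilde\phi_i$ is an even polynomial on $[-\eta,\eta]$, hence $C^\infty$ at $0$, only the singular functions $\psi$ and $\phi_i$ produce a jump, giving
\begin{equation*}
\left[\tilde\psi^{(2j+1)}\right]_0 = \left[\psi^{(2j+1)}\right]_0 - \sumlim{i=1}{N} \left[\phi_i^{(2j+1)}\right]_0 \psh{\tilde p_i}{\tilde\psi}.
\end{equation*}

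The first step is to evaluate the two jumps above via the structure Lemma \ref{lem:decomp}. Among all terms in the expansion of $\psi$ around $0$, only $-\tfrac{Z_0}{2}\psi(0)\tfrac{(-E)^j}{(2j+1)!}|x|^{2j+1}$ has a non-vanishing $(2j+1)$-st derivative jump, yielding $[\psi^{(2j+1)}]_0 = -Z_0(-E)^j\psi(0)$. The same computation applied to each $\phi_i$ (with eigenvalue $\epsilon_i$ and nonzero value $\phi_i(0)$) gives $[\phi_i^{(2j+1)}]_0 = -Z_0(-\epsilon_i)^j\phi_i(0)$. Substituting into the identity above and recognizing the resulting sum as the Euclidean pairing with $\cE^j\Phi(0)$,
\begin{equation*}
\left[\tilde\psi^{(2j+1)}\right]_0 = -Z_0\Bigl((-E)^j\psi(0) - \psh{\tilde p}{\tilde\psi}\cdot \cE^j\Phi(0)\Bigr).
\end{equation*}

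The core of the proof is then to rewrite $\psh{\tilde p}{\tilde\psi}\in\R^N$ in terms of $\psi$. Since each $p_j=\rho_\eta\tilde\phi_j$ is supported in $[-\eta,\eta]$, where $\tilde\psi = \psi - T_0\tilde\psi$ (the $T_a$ term contributing nothing on this set), pairing with $p_j$ and using $\psh{p_j}{\tilde\phi_i}=B_{ji}$ and $\psh{p_j}{\phi_i}=A_{ji}$ gives
\begin{equation*}
\psh{p_j}{\tilde\psi} = \psh{p_j}{\psi} - \sumlim{i=1}{N}(A_{ji}-B_{ji})\psh{\tilde p_i}{\tilde\psi}.
\end{equation*}
The representation $\tilde p_i=\sumlim{k=1}{N}(B^{-1})_{ik}p_k$ implies $\sumlim{i=1}{N}B_{ji}\psh{\tilde p_i}{\tilde\psi}=\psh{p_j}{\tilde\psi}$, so the above collapses to the linear system $A\,\psh{\tilde p}{\tilde\psi}=\psh{p}{\psi}$, that is $\psh{\tilde p}{\tilde\psi}=\psh{A^{-1}p}{\psi}$. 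For any $c=(c_k)\in\R^N$, splitting $\psi=\sumlim{k=1}{N}c_k\phi_k+\bigl(\psi-\sumlim{k=1}{N}c_k\phi_k\bigr)$ and observing $\psh{A^{-1}p}{\sumlim{k=1}{N}c_k\phi_k}=A^{-1}(Ac)=c$ produces
\begin{equation*}
\psh{\tilde p}{\tilde\psi} = c + \psh{A^{-1}p}{\psi - \sumlim{k=1}{N}c_k\phi_k}.
\end{equation*}

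Inserting this into the intermediate formula for $[\tilde\psi^{(2j+1)}]_0$ and using $c\cdot\cE^j\Phi(0)=\sumlim{k=1}{N}c_k(-\epsilon_k)^j\phi_k(0)$ produces exactly the claimed identity. The main obstacle lies in the linear-algebraic bookkeeping of the core step: one has to exploit both the localization of $p_j$ in $[-\eta,\eta]$ (so that $T_a\tilde\psi$ drops out of the pairing and the identity $A\,\psh{\tilde p}{\tilde\psi}=\psh{p}{\psi}$ can be closed) and the biorthogonality encoded by the matrix $B$. The derivative-jump computation is then an essentially immediate consequence of the structure lemma.
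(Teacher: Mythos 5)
Your proposal is correct and follows essentially the same route as the paper: compute the jump from $\tilde\psi=\psi-\sum_i\psh{\tilde p_i}{\tilde\psi}(\phi_i-\tilde\phi_i)$ near $0$, establish $A\,\psh{\tilde p}{\tilde\psi}=\psh{p}{\psi}$, and insert the arbitrary coefficients via $\psh{A^{-1}p}{\phi_k}=e_k$. The only (harmless) differences are that you obtain the linear system in one step by pairing directly with $p_j$ instead of the paper's two-step passage through $\widetilde A$ and $B$, and that you justify the jump formulas for $\psi$ and $\phi_i$ via the structure lemma rather than quoting them.
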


\begin{proof}
We first prove the statement for $j=0$. In a neighbourhood of $0$, we have:
\begin{equation}
\tilde{\psi} = \psi - \sumlim{i=1}{N} \psh{\tilde{p}_i}{\tilde{\psi}} (\phi_i - \tilde{\phi}_i).
\label{eq:psi_tilde-psi1}
\end{equation}
Using the equations satisfied by $\psi$ and $\phi_i$ gives for the first derivative jump at $0$ of $\tilde{\psi}$ :
\begin{align}
[\tilde{\psi}']_0 & = [\psi']_0 - \sumlim{i=1}{N} \psh{\tilde{p}_i}{\tilde{\psi}} [\phi_i']_0 \nonumber \\
& = -Z_0 \left( \psi(0) - \sumlim{i=1}{N} \psh{\tilde{p}_i}{\tilde{\psi}} \phi_i(0) \right).
\label{eq:saut_tdpsi}
\end{align}
Multiplying equation (\ref{eq:psi_tilde-psi1}) by $\tilde{p}_k$ and integrating on $[-1/2,1/2]$,
\begin{align*}
\psh{\tilde{p}_k}{\psi} - \sumlim{i=1}{N} \psh{\tilde{p}_i}{\tilde{\psi}} \psh{\tilde{p}_k}{\phi_i} = & \psh{\tilde{p}_k}{\tilde{\psi}} - \sumlim{i=1}{N} \psh{\tilde{p}_i}{\tilde{\psi}} \underbrace{\psh{\tilde{p}_k}{\tilde{\phi_i}}}_{=\delta_{ki}} =0 \ , \\
 \psh{\tilde{p}_k}{\psi} = & \sumlim{i=1}{N} \psh{\tilde{p}_i}{\tilde{\psi}} \psh{\tilde{p}_k}{\phi_i} \ .
\end{align*}
Therefore
\begin{equation}
\label{eq:p_psi-p_tildepsi}
\widetilde{A} \psh{\tilde{p}}{\tilde{\psi}} = \psh{\tilde{p}}{\psi}
\end{equation}
Likewise
$$
B \psh{\tilde{p}}{\psi} = \psh{p}{\psi},
$$
since for $1 \leq i \leq N$ :
\begin{align*}
\sumlim{j=1}{N} B_{ij} \psh{\tilde{p}_j}{\psi} & = \sumlim{j=1}{N} \psh{p_i}{\tilde{\phi}_j} \psh{\tilde{p}_j}{\psi} \\
& = \psh{p_i}{\psi}
\end{align*}
According to Proposition \ref{cond:invrs}, the matrix $\widetilde{A}$ is invertible and so is $B$ since $B = A \widetilde{A}$, with $A$ invertible by assumption. We therefore have
\begin{align*}
\sumlim{i=1}{N} \psh{\tilde{p}_i}{\tilde{\psi}} \phi_i(0) & = \psh{\tilde{p}}{\tilde{\psi}} \cdot \Phi(0) \\
& = \widetilde{A}^{-1} \psh{\tilde{p}}{\psi} \cdot \Phi(0)\\
& = (B \widetilde{A})^{-1} \psh{p}{\psi} \cdot \Phi(0) \\
& = \psh{A^{-1} p}{\psi} \cdot \Phi(0) .
\end{align*}
We thus obtain the more compact form:
\begin{equation*}
[\tilde{\psi}']_0 =  -Z_0 \left( \psi(0) - \psh{A^{-1} p}{\psi} \cdot \Phi(0) \right) .
\end{equation*}
To complete the proof the lemma, it suffices to show that 
\begin{equation*}
\psh{A^{-1} p}{\phi_{i+1}} = e_i ,
\end{equation*}
where $e_i$ is the $i$-th vector of the canonical basis of $\mathbb{R}^N$. This is straightforward since $\psh{p}{\phi_{i+1}}$ is simply the $i$-th column of the matrix $A$.

For $j \geq 1$, we proceed in the same way using 
\begin{equation*}
\begin{cases}
[\psi^{(2j+1)}]_0 =  -Z_0 (-E)^j \psi(0) , \\
[\phi_k^{(2j+1)}]_0 = -Z_0 (-\epsilon_k)^j \phi_k(0).
\end{cases}
\end{equation*}
\end{proof}

\begin{note}
Notice that we showed
\begin{equation}
\psh{\tilde{p}}{\tilde{\psi}} = \psh{A^{-1}p}{\psi}.
\label{eq:tilde_p-Ap-psi}
\end{equation}
This equality will be used later in the estimation of the $d$-th derivative jump.
\end{note}

To prove Lemma \ref{lem:der_jump_0}, it remains to study the behavior of $A^{-T} \cE^j \Phi(0)$ as $\eta$ goes to $0$. By assumption, $A$ is invertible for all $\eta >0$ but when $\eta = 0$, $A$ is a rank $1$ matrix. Actually, in the special case of the 1D Schrödinger operator with Dirac potentials, we have a precise characterization of the behavior of $A^{-T} \cE^j \Phi(0)$ as $\eta$ goes to $0$. 

\begin{lem}
\label{lem:ApPhi}
Let $f$ be a function in $L^2_\mathrm{per}(0,1)$ and $Q(t) = (Q_0(t), \dots, Q_{d-1}(t))^T$ be a vector of even polynomials which forms a basis of the space of even polynomials of degree at most $2d-2$. 
Let $G_\eta$ be the $d \times N$ matrix and $C_\eta$ the $N \times d$ matrix defined by:
\begin{align*}
G_\eta & = \itg{-1}{1}{\rho(t) Q(t) \Phi(\eta t)^T}{t} , \\
\widetilde{\Phi}(t) & = C_\eta Q(t/ \eta), \ \forall t \in (-\eta, \eta).
\end{align*}
Then we have
\begin{equation}
\psh{A^{-1} p}{f} = \itg{-1}{1}{\rho(t) (C_\eta G_\eta)^{-1} C_\eta Q(t) f(\eta t)}{t} .
\label{eq:ApPhi}
\end{equation}
\end{lem}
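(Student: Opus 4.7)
The plan is to reduce everything to a change of variables on the augmentation interval $[-\eta,\eta]$, so that the matrix $A$ and the vector $\langle p,f\rangle$ both factor through the matrix $C_\eta$ of pseudo wave function coefficients in the polynomial basis $Q$. Once both objects share this factorization, the claimed formula follows by an algebraic simplification in which the factors of $\eta$ coming from the Jacobian cancel.

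First I would observe that since $\rho$ is supported in $[-1,1]$ and $\eta<\tfrac12$, the function $\rho_\eta$ restricted to the unit cell $[-\tfrac12,\tfrac12]$ coincides with $t\mapsto\rho(t/\eta)$ and is supported in $[-\eta,\eta]$, so for any $f\in L^2_{\mathrm{per}}(0,1)$ and any $1\le i\le N$,
\begin{equation*}
\langle p_i,f\rangle \;=\; \int_{-\eta}^{\eta}\rho(t/\eta)\,\widetilde\phi_i(t)\,f(t)\,\mathrm{d}t.
\end{equation*}
Using the definition of $C_\eta$, namely $\widetilde\Phi(t)=C_\eta Q(t/\eta)$ on $(-\eta,\eta)$, and changing variables $t=\eta s$, one obtains the vector identity
\begin{equation*}
\langle p,f\rangle \;=\; \eta\,C_\eta\int_{-1}^{1}\rho(s)\,Q(s)\,f(\eta s)\,\mathrm{d}s.
\end{equation*}

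Next I would apply exactly the same manipulation to the entries of $A$, with $f$ replaced by $\phi_j$. Stacking over $j$ this yields
\begin{equation*}
A \;=\; \eta\,C_\eta\,G_\eta,
\end{equation*}
where $G_\eta=\int_{-1}^{1}\rho(s)Q(s)\Phi(\eta s)^T\,\mathrm{d}s$ is exactly the $d\times N$ matrix in the statement. Since $A$ is invertible by the standing assumption, $C_\eta G_\eta$ is invertible, and $A^{-1}=\eta^{-1}(C_\eta G_\eta)^{-1}$. Combining this with the expression for $\langle p,f\rangle$ gives
\begin{equation*}
\langle A^{-1}p,f\rangle \;=\; A^{-1}\langle p,f\rangle \;=\; (C_\eta G_\eta)^{-1}C_\eta\int_{-1}^{1}\rho(s)Q(s)f(\eta s)\,\mathrm{d}s,
\end{equation*}
which, after pulling the constant matrix inside the integral, is precisely \eqref{eq:ApPhi}.

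There is no genuine obstacle here: the argument is a careful bookkeeping of the change of variables $t=\eta s$ and of the fact that $\widetilde\phi_i$ is a polynomial of degree at most $2d-2$ on the augmentation interval. The only mild care needed is to interpret $\langle A^{-1}p,f\rangle$ consistently as $A^{-1}\langle p,f\rangle$ (i.e., the linearity of the inner product in the first slot combined with the matrix acting on the vector of scalar products), and to check that the $\eta$ from the Jacobian exactly cancels the $\eta^{-1}$ from inverting $A=\eta C_\eta G_\eta$, which explains why the final formula is $\eta$-free in its prefactor.
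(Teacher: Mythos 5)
Your proposal is correct and follows essentially the same route as the paper: both rest on the change of variables $t=\eta s$ giving the factorization $A=\eta\, C_\eta G_\eta$, after which the $\eta$ factors cancel; you merely apply $A^{-1}$ to the vector $\psh{p}{f}$ rather than writing out the function $A^{-1}p$ first, which is an immaterial reordering of the same computation.
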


\begin{proof}
We have
\begin{align*}
A_{ij} & = \psh{p_{i+1}}{\phi_{j+1}} \\
& = \itg{-\eta}{\eta}{\rho(t/\eta) \tilde{\phi}_{i+1}(t) \phi_{j+1}(t)}{t} \\
& = \eta \itg{-1}{1}{\rho(t) \tilde{\phi}_{i+1}(\eta t) \phi_{j+1}(\eta t)}{t} . 
\end{align*}
Therefore,
\begin{align*}
A & = \eta \itg{-1}{1}{\rho(t) \widetilde{\Phi}(\eta t) \Phi(\eta t)^T}{t}  \\
& = \eta \, C_\eta \itg{-1}{1}{\rho(t) Q(t) \Phi(\eta t)^T}{t} \\
& = \eta \, C_\eta G_\eta .
\end{align*}
Since $(\tilde{\phi}_i)_{1 \leq i \leq N}$ is free, the matrix $C_\eta$ is invertible :
\begin{equation*}
A^{-1} p = \frac{1}{\eta} \rho(t/\eta) (C_\eta G_\eta)^{-1} C_\eta Q(t/\eta) .
\end{equation*}
Thus
\begin{align*}
\psh{A^{-1} p}{f} & = \frac{1}{\eta} \itg{-\eta}{\eta}{\rho(t/\eta) (C_\eta G_\eta)^{-1} C_\eta Q(t/\eta) f( t)}{t}  \\
& = \itg{-1}{1}{\rho(t)(C_\eta G_\eta)^{-1} C_\eta Q(t) f(\eta t)}{t} .
\end{align*}
\end{proof}

Before moving to the next lemma, we introduce the following notation. Let $Q_k$ be the even polynomials of degrees at most $2d-2$ defined by
\begin{equation*}
\itg{-1}{1}{\rho(t) Q_k(t) t^{2j}}{t} = \delta_{kj}, \ 0 \leq k,j \leq d-1 \ ,
\end{equation*}
and let $P_k$, $0 \leq k \leq d-1$ and $P$ be defined by
\begin{align*}
P_k(t) & = \frac{1}{2^k k!}(t^2-1)^k \ , \\
P(t)&  = (P_0(t), \dots, P_{d-1}(t))^T .
\end{align*}
It is easy to see that $P_k$ satisfies
\begin{equation*}
\begin{cases}
P_k^{(j)}(1) = 0 , \ 0 \leq j \leq k-1 \ , \\
P_k^{(k)}(1) = 1.
\end{cases}
\end{equation*}
Let $\Pi$ be the transition matrix from $Q$ to $P$ :
\begin{equation*}
\Pi Q = P.
\end{equation*}
Finally we denote by $C_\eta$ the matrix of the expansion of $\widetilde{\Phi}$ in the basis $Q$ and $C_\eta^{(P)}$ the matrix of the expansion of $\widetilde{\Phi}$ in the basis $P$ :
\begin{equation*}
\begin{cases}
\widetilde{\Phi}(t) = C_\eta Q(t/\eta) \ , \\
\widetilde{\Phi}(t) = C_\eta^{(P)} P(t/\eta).
\end{cases}
\end{equation*}
It is easy to see that 
\begin{equation*}
C_\eta = C_\eta^{(P)} \Pi.
\end{equation*}

\begin{lem}
\label{lem:stab}
For $0 \leq j \leq N-1$, we have
\begin{equation}
\label{eq:stability}
\frac{\eta^{2j}}{(2j)!} (\cE^j \Phi(0))^T (C_\eta G_\eta)^{-1} C_\eta \ \mathop{\longrightarrow}_{\eta \to 0} \ e_j^T \begin{pmatrix} I_N & M_\pi \end{pmatrix},
\end{equation}
where $M_\pi$ is a $(d-N) \times N$ matrix. 

Furthermore
\begin{equation*}
\| (C_\eta G_\eta)^{-1} C_\eta \| = \cO \left( \frac{1}{\eta^{2N-2}} \right).
\end{equation*}
\end{lem}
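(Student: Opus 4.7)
The plan is to exploit the biorthogonality identity $\psh{A^{-1}p}{\phi_\ell} = e_\ell$ established in the proof of Lemma~\ref{lem:der_jump}, combined with the representation from Lemma~\ref{lem:ApPhi}. Setting $X_\eta := (C_\eta G_\eta)^{-1} C_\eta$, this reads
\[
\int_{-1}^{1}\rho(t)\,X_\eta Q(t)\,\phi_\ell(\eta t)\,dt \;=\; e_\ell, \qquad 1\leq\ell\leq N.
\]
I would use Lemma~\ref{lem:decomp} to Taylor-expand $\phi_\ell(\eta t)$: since $\phi_\ell$ is even and the only cusp is $[\phi_\ell']_0=-Z_0\phi_\ell(0)$, only the powers $(\eta t)^{2m}$ and cusp terms $|\eta t|^{2m+1}$ appear, with explicit coefficients involving $\phi_\ell(0)$ and $(-\epsilon_\ell)^m$.

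The defining orthogonality $\int_{-1}^{1}\rho(t)Q_k(t)t^{2m}\,dt = \delta_{km}$ then collapses the even-power integrals into the individual columns $X_\eta e_m$ of $X_\eta$, whereas the cusp terms introduce the fixed moment vectors $\gamma_{2m+1} := \int_{-1}^{1}\rho(t) Q(t)|t|^{2m+1}\,dt \in \R^d$. Setting
\[
W_m \;:=\; \frac{\eta^{2m}}{(2m)!}\,X_\eta e_m \;-\; \frac{Z_0\,\eta^{2m+1}}{2(2m+1)!}\,X_\eta\,\gamma_{2m+1},
\]
the biorthogonality rewrites as the Vandermonde-type system
\[
\phi_\ell(0)\sum_{m\geq 0}(-\epsilon_\ell)^m W_m \;=\; e_\ell \;+\; O\!\left(\eta^{2d}\|X_\eta\|\right), \qquad 1\leq\ell\leq N.
\]
Truncating to $m=0,\ldots,N-1$ and stacking the $W_m$'s as columns of $\mathcal{W}$ gives $V\mathcal{W}^T = I_N + R_\eta$, where $V_{\ell,m} := \phi_\ell(0)(-\epsilon_\ell)^m$ is invertible (product of a Vandermonde in the distinct nodes $-\epsilon_\ell$ with a diagonal), and $R_\eta$ collects the $m\geq N$ tail together with the Taylor remainder.

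A bootstrap would then establish $\|X_\eta\| = O(\eta^{2-2N})$: inverting $V$ produces $W_m \approx V^{-T}e_m$ modulo $R_\eta$, and the reconstruction $X_\eta e_m = \frac{(2m)!}{\eta^{2m}}W_m + \frac{Z_0\eta}{2(2m+1)}X_\eta\gamma_{2m+1}$ propagates a sharp estimate iteratively, the worst scaling arising at $m=N-1$; the columns $X_\eta e_k$ for $k\geq N$ remain bounded. Once the norm bound is in hand, $R_\eta=o(1)$ and $W_m\to V^{-T}e_m$. For the limit, I use the identity $(\cE^j\Phi(0))^T = e_j^T V^T$; substituting $X_\eta e_k \approx \frac{(2k)!}{\eta^{2k}}W_k$ for $k<N$ and invoking $V^T V^{-T} = I_N$ yields, at leading order,
\[
\frac{\eta^{2j}}{(2j)!}(\cE^j\Phi(0))^T X_\eta e_k \;\approx\; \frac{(2k)!}{(2j)!}\,\eta^{2(j-k)}\,\delta_{jk},
\]
which is exactly $\delta_{jk}$ when $j=k$ and identically zero when $j\neq k$, reproducing the $I_N$ block. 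For $k\geq N$ the $X_\eta e_k$ remain bounded with well-defined limits, so $\frac{\eta^{2j}}{(2j)!}$ times the inner product with $V^T e_j$ converges to a constant, which by definition yields the matrix $M_\pi$.

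The hard part is the bootstrap: the $\gamma_{2m+1}$ terms couple every column of $X_\eta$ to all the others through the same Vandermonde system, so extracting the sharp scaling $\|X_\eta\|=O(\eta^{2-2N})$ requires a careful block-wise inversion separating $m<N$ from $m\geq N$ with finer error tracking than a single estimate allows. A second delicate point is ensuring that the subleading corrections to $W_k\approx V^{-T}e_k$ do not spoil the limit identification for off-diagonal entries ($k,j<N$, $k\neq j$) after multiplication by the divergent factor $\eta^{-2k}$; here I expect to exploit the rank-one structure of the leading Taylor remainder, which is of the form $\phi_\ell(0)(-\epsilon_\ell)^d\,\beta$ with $\beta\in\R^N$ independent of $\ell$, so that its transport through $V^{-1}$ is concentrated along the single direction $v^{\ast} := V^{-1} V_{\cdot,d}$ and contributes only to the higher-column ($k\geq N$) entries absorbed into $M_\pi$.
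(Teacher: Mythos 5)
There is a genuine gap, and it is structural rather than a matter of ``finer error tracking''. The only substantive input in your scheme is the biorthogonality $X_\eta G_\eta=I_N$ (your identity $\int_{-1}^1\rho(t)X_\eta Q(t)\Phi(\eta t)^T\,\mathrm{d}t=I_N$ with $X_\eta=(C_\eta G_\eta)^{-1}C_\eta$). This provides only $N\times N$ scalar constraints on the $N\times d$ unknown $X_\eta$: any matrix $X_\eta+Y$ with $YG_\eta=0$ satisfies the same system, and the left kernel of $G_\eta$ has dimension $d-N$. Hence neither the sharp bound $\|X_\eta\|=\cO(\eta^{2-2N})$ nor, above all, the limit block $M_\pi$ (the columns $k\geq N$ of the limit, which genuinely depend on the pseudo wave functions through $C_\eta$) can be recovered from the Vandermonde system alone, no matter how carefully you separate $m<N$ from $m\geq N$. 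The paper necessarily brings in the structure of $C_\eta$ itself: Lemma \ref{lem:C_eta} (invertibility of $C_1$, $C_1^{-1}=\mathcal{P}K_1$ with $\|C_1^{-1}\|=\cO(\eta^{1-N})$, $C_1^{-1}C_2=M+\cO(\eta)$), the transition matrix $\Pi$ between the bases $P$ and $Q$, and the dual matrix $H_1$ satisfying $G_1H_1=I_N+\cO(\eta)$, $G_2H_1=\cO(\eta)$, $\|H_1\|=\cO(\eta^{2-2N})$; the limit $\bigl(I_N\ \big|\ M_\pi\bigr)$ is literally assembled from these blocks ($\Pi_1+M\Pi_3$, $\Pi_2+M\Pi_4$). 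Your proposal never uses $C_\eta$ beyond its appearance inside $X_\eta$, so the target quantities are simply not determined by the equations you set up.

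Moreover, the intermediate column estimates on which your bootstrap rests are generically false. You claim $X_\eta e_k$ stays bounded for $k\geq N$ and $\|X_\eta e_m\|=\cO(\eta^{-2m})$ for $m<N$; but if the last row of $M_\pi$ is nonzero (the generic case), the very conclusion of the lemma, $\frac{\eta^{2N-2}}{(2N-2)!}(\cE^{N-1}\Phi(0))^TX_\eta e_k\to(M_\pi)_{N-1,k-N}$, forces $\|X_\eta e_k\|\gtrsim\eta^{2-2N}$ for $k\geq N$; likewise every column may carry a component of size up to $\cO(\eta^{2-2N})$ along the dual vector associated with $\frac{\eta^{2N-2}}{(2N-2)!}\cE^{N-1}\Phi(0)$, so the columns with $k<N$ are not $\cO(\eta^{-2k})$ either. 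The lemma's content is precisely that certain \emph{row} combinations $\frac{\eta^{2j}}{(2j)!}(\cE^j\Phi(0))^TX_\eta$ are bounded, not the columns. Two concrete consequences: the cusp coupling in $W_m$ is of size $\eta^{2m+1}\|X_\eta\|\sim\eta^{2m+3-2N}$, which diverges for small $m$ when $N\geq2$, so ``$W_m\to V^{-T}e_m$ modulo $R_\eta$'' is not a small perturbation statement; and in the final step the off-diagonal terms $\frac{\eta^{2j}}{(2j)!}(\cE^j\Phi(0))^TX_\eta e_k$ with $j<k$ cannot be shown to vanish from a bound $\|X_\eta e_k\|=\cO(\eta^{-2k})$, which is far too weak against the factor $\eta^{2j}$. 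The rank-one structure of the Taylor remainder does not repair this, because the obstruction lies in the underdetermination of the system, not in the remainder. To complete a proof you would have to re-import the analysis of $C_\eta$ (the $C_1$, $C_2$ blocks and the $P$-basis), at which point you are back to the paper's argument.
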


\begin{note}
The main idea of the proof is to use the particular structures of the matrices $C_\eta$ and $G_\eta$. We denote by $C_1$, $C_2$, $G_1$ and $G_2$ the matrices defined by
\begin{equation*}
\begin{cases}
C_\eta =  \Big( C_1 \, \Big| \, C_2 \Big) , \\
G_\eta = 
\left( \begin{array}{c}
G_1 \\ \hline  G_2 
\end{array} \right).
\end{cases}
\end{equation*}

Suppose that $C_1$ is invertible and such that $\| C_1^{-1}C_2 \| = \cO (1)$ as $\eta \to 0$, and there exists an invertible matrix $H_1$ such that 
\begin{equation*}
\begin{cases}
G_1 H_1 = I_N + \cO(\eta) \ , \\
G_2 H_1 = \cO(\eta) \ , \\
e_0^T H_1^{-1} = \Phi(0)^T.
\end{cases}
\end{equation*}

Then it is easy to see that
\begin{equation*}
\Phi(0)^T (C_\eta G_\eta)^{-1} C_\eta = e_0^T \Big( I_N \ \Big| \ C_1^{-1}C_2 \Big) + \cO(\eta).
\end{equation*}

Using Lemma \ref{lem:decomp} applied to $\Phi$, it is easy to unveil the dependence in $\eta$ of the matrix $G_\eta$ but we have no hint on the structure of $C_\eta$. Likewise, $C_\eta^{(P)}$ is easy to study but the matrix $\itg{-1}{1}{\rho(t) P(t) \Phi(\eta t)^T}{t}$ is not. So we have to work with both bases $P$ and $Q$, exhibit the structures of the matrices $C_\eta^{(P)}$ and $G_\eta$ and recombine everything with the transition matrix $\Pi$. 
\end{note}

Before proving Lemma \ref{lem:stab}, we state some properties of the matrix $C_\eta^{(P)}$ and its submatrices $C_1$, $C_2$.

\begin{lem}
\label{lem:C_eta}
Let $N \in \N^*$ and $d \geq N$. Let $C_1 \in \R^{N \times N}$ and $C_2 \in \R^{N \times (d-N)}$ be the matrices such that:
\begin{equation}
\label{eq:C_etaP}
C_\eta^{(P)}  = \Big( C_1 \, \Big| \, C_2 \Big).
\end{equation}
Let $(g_k)_{0 \leq k \leq N-1}$ be the dual family of the vectors $(\eta^k \Phi^{(k)}(\eta))_{0 \leq k \leq N-1}$ and $K_1$ be the matrix
\begin{equation*}
K_1 = 
\begin{pmatrix}
g_0^T \\
\vdots \\
g_{N-1}^T
\end{pmatrix} \in \R^{N \times N}.
\end{equation*}
Then, there exists an upper triangular matrix $\mathcal{P}$ independent of $\eta$ of the form
\begin{equation*}
\mathcal{P} = 
\begin{pmatrix}
1 & 0 & \dots & 0 \\
0 & \ddots & * & * \\
\vdots & 0 & \ddots & *  \\
0 & \dots & 0 & 1
\end{pmatrix} \in \R^{N \times N}
\end{equation*}
such that 
\begin{equation*}
\begin{cases}
C_1^{-1} = \mathcal{P} K_1, \\
C_1^{-1} C_2 = M + \cO(\eta),
\end{cases}
\end{equation*}
where $M \in \R^{N \times (d-N)}$ is a matrix independent of $\eta$. 
\end{lem}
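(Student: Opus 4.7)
The plan is to recast the interpolation conditions defining the pseudo wave functions as a single matrix equation and then block-decompose. Since $P_k^{(j)}(1)=0$ for $j<k$ and $P_k^{(k)}(1)=1$, the conditions $\tilde\phi_i^{(j)}(\eta)=\phi_i^{(j)}(\eta)$ for $0\le j\le d-1$ rewrite, after multiplication by $\eta^j$, as $C_\eta^{(P)} M^{(P)}=D$, where $M^{(P)}\in\R^{d\times d}$ is upper triangular with unit diagonal defined by $(M^{(P)})_{k,j}=P_k^{(j)}(1)$, and $D\in\R^{N\times d}$ has entries $D_{i,j}=\eta^j\phi_i^{(j)}(\eta)$. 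Splitting $C_\eta^{(P)}=(C_1\,|\,C_2)$, $D=(D_1\,|\,D_2)$ and
\[
M^{(P)}=\begin{pmatrix} M_{11} & M_{12} \\ 0 & M_{22} \end{pmatrix}
\]
with $M_{11}\in\R^{N\times N}$ and $M_{22}\in\R^{(d-N)\times(d-N)}$ upper triangular with unit diagonal (hence invertible) yields the two block relations $C_1M_{11}=D_1$ and $C_1M_{12}+C_2M_{22}=D_2$.

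For the first claim, observe that the $j$-th column of $D_1$ is precisely $\eta^j\Phi^{(j)}(\eta)$, so by definition of the dual family $(g_k)$ we have $K_1D_1=I_N$, i.e.\ $K_1=D_1^{-1}$ (invertibility of $D_1$ for $\eta\in(0,\eta_0)$ follows from the analyticity-of-determinant argument used in the proof of Proposition \ref{prop:tdphi_exist}). Hence $C_1^{-1}=M_{11}D_1^{-1}=\mathcal{P}K_1$ with $\mathcal{P}:=M_{11}$; the specific shape in the statement comes from $P_0\equiv 1$ (giving the first row $(1,0,\dots,0)$) and from $P_{N-1}$ having a zero of order $N-1$ at $t=1$ (giving the last row $(0,\dots,0,1)$), while the diagonal ones come from $P_k^{(k)}(1)=1$. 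For the second claim, solving $C_1M_{12}+C_2M_{22}=D_2$ for $C_2$ and left-multiplying by $C_1^{-1}$ yields
\[
C_1^{-1}C_2=\bigl(M_{11}D_1^{-1}D_2-M_{12}\bigr)M_{22}^{-1},
\]
so it suffices to establish that $D_1^{-1}D_2=L+\cO(\eta)$ for some $\eta$-independent matrix $L$; the conclusion then follows with $M=(M_{11}L-M_{12})M_{22}^{-1}$.

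The main obstacle is establishing this expansion, because $D_1$ actually becomes singular as $\eta\to 0$: the derivative-jump relation $\phi_i^{(2m+1)}(0^+)=-\tfrac{Z_0}{2}(-\epsilon_i)^m\phi_i(0)$ forces the odd columns of $(\phi_i^{(j)}(0^+))_{i,j}$ to be proportional to the even ones, so $D_1^{-1}$ blows up at $\eta=0$. The structural fact that rescues the argument is that the columns of $D_2$ are tied to those of $D_1$ through the bulk ODE $\phi_i''=-\epsilon_i\phi_i$ valid on $(0,1/2)$, which gives the identity $\eta^{j+2}\Phi^{(j+2)}(\eta)=\eta^2\cE\,\eta^j\Phi^{(j)}(\eta)$; consequently each column of $D_2$ is of the form $(\eta^2\cE)^r d_\ell$ for some column $d_\ell$ of $D_1$ and some $r\ge 1$, and the $\eta^{2r}$ factor exactly compensates the blow-up of $D_1^{-1}$. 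Combined with the analytic dependence of $C_\eta^{(P)}$ on $\eta$ near $0$ (visible by rescaling $\tilde\phi_i$ to the fixed interval $[-1,1]$ and noting that the resulting polynomial interpolant depends analytically on the interpolation data $\eta^j\phi_i^{(j)}(\eta)$, which are themselves analytic in $\eta$), this shows that $D_1^{-1}D_2$ is bounded on $(0,\eta_0)$ and therefore extends analytically to $\eta=0$ by Riemann's removable singularity theorem, yielding the required $L+\cO(\eta)$ expansion.
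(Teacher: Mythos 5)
Your first half is sound and is essentially the paper's own argument written in block form: the relation $C_\eta^{(P)}M^{(P)}=D$ is exactly the column recursion (\ref{eq:columns_Ceta}), your $\mathcal{P}=M_{11}$ coincides with the paper's $(K_1C_1)^{-1}$, and $K_1=D_1^{-1}$ is just the duality; the reduction of the second claim to an expansion $D_1^{-1}D_2=L+\cO(\eta)$ is also a correct reformulation (and the analyticity of $C_\eta^{(P)}$ is immediate from $C_\eta^{(P)}=D\,(M^{(P)})^{-1}$). The problem is that the one sentence carrying all the weight --- ``the $\eta^{2r}$ factor exactly compensates the blow-up of $D_1^{-1}$'' --- is asserted, not proved, and as a norm statement it is false. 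The dual family degenerates much faster than $\eta^{-2}$: generically $\|D_1^{-1}\|\sim\eta^{2-2N}$. For $N=2$, $\eta\Phi'(\eta)=-\tfrac{Z_0}{2}\eta\,\Phi(0)+\eta^{2}\cE\Phi(0)+\cO(\eta^{3})$ lies within $\cO(\eta^{2})$ of the line $\R\,\Phi(\eta)$, so $\|g_1\|\sim\eta^{-2}$ already; for $N=3$ one checks (e.g.\ by cofactors of the coefficient matrix in the basis $(\cE^{m}\Phi(0))_{m\le 2}$) that $\|g_1\|,\|g_2\|\sim\eta^{-4}$. Meanwhile the compensating factor can be as small as $\eta^{2}$ (the columns $j=N,N+1$ have $r=1$) and $\|\eta^{\ell}\Phi^{(\ell)}(\eta)\|=\cO(\eta^{\ell})$, so the naive product bound for the entry $g_k\cdot\eta^{j}\Phi^{(j)}(\eta)$ is $\cO(\eta^{j+2-2N})$, which for $N=3$, $j=3$ is $\cO(\eta^{-1})$: unbounded. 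The pairing is in fact bounded, but only through a structured cancellation: the $\eta^{2-2N}$-sized part of $g_k$ sits along the dual of $\cE^{N-1}\Phi(0)$ and meets only the $\cO(\eta^{2N-2})$ component of the column, while the larger components of the column meet only the smaller components of $g_k$. Seeing this requires expanding $\eta^{\ell}\Phi^{(\ell)}(\eta)$ in the basis $(\cE^{m}\Phi(0))_{0\le m\le N-1}$ via the structure Lemma \ref{lem:decomp} and controlling the dual vectors componentwise --- precisely the machinery of Lemma \ref{lem:stab} and of the expansions (\ref{eqlem:GetaPhi_even})--(\ref{eqlem:GetaPhi_odd}) used for Proposition \ref{lem:der_jump_eta}; it cannot be replaced by norm counting. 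So the central boundedness claim is a genuine gap.

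Two smaller repairs would also be needed. First, the reduction ``every column of $D_2$ equals $(\eta^{2}\cE)^{r}d_\ell$ with $d_\ell$ a column of $D_1$'' fails for $N=1$ and $j$ odd, since the ODE only links derivatives two orders apart (the case $N=1$ is easy, but by a direct argument, not by yours). Second, Riemann's removable singularity theorem requires boundedness on a punctured \emph{complex} neighbourhood, not on a real interval; since the entries of $D_1^{-1}D_2$ are meromorphic at $\eta=0$ (ratios of analytic functions with denominator $\det D_1\not\equiv 0$), boundedness along real $\eta\to 0$ does rule out a pole, so this step is salvageable --- but only once the boundedness itself has actually been established, which is exactly the missing piece.
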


\begin{note}
The particular form of the matrix $\mathcal{P}$ will be used in the estimation of the $d$-th derivative jump of $\tilde{\psi}$ at $\eta$ (Lemma \ref{lem:der_jump_eta}) and of $Tf(x)$ (Lemmas \ref{lem:Tf_L2} and \ref{lem:Tf_prime}).
\end{note}

\begin{proof}
Let $(c_k)_{0 \leq k \leq d-1}$ be the columns of $C_\eta^{(P)}$. By the continuity conditions at $\eta$ and our choice of the polynomials $P_k$, we have
\begin{equation}
\forall 0 \leq j \leq d-1, \  c_j = \eta^j \Phi^{(j)}(\eta) - \sumlim{k=0}{j-1} P_k^{(j)}(1) c_k.
\label{eq:columns_Ceta}
\end{equation}
Thus $c_j$ is a linear combination of the vectors $\eta^k \Phi^{(k)}(\eta)$ for $k \leq j$ whose coefficients are \emph{independent} of $\eta$. Moreover as $P_0=1$, we have $P_0^{(j)}=0$ for $j\geq 1$. So in fact, for $j \geq 1$, $c_j$ is spanned by the vectors $\eta^k \Phi^{(k)}(\eta)$ for $1 \leq k \leq j$. Then, by definition of $K_1$ and the vectors $g_k$,
\begin{equation}
\label{eq:inv_C_etaP}
K_1 C_1 = 
\begin{pmatrix}
1 & 0 & \dots & 0 \\
0 & \ddots & * & * \\
\vdots & 0 & \ddots & *  \\
0 & \dots & 0 & 1
\end{pmatrix}.
\end{equation}
Note that this matrix is independent of $\eta$. Let us denote it by $\mathcal{P}^{-1}$. Then the inverse of $C_1$ is $\mathcal{P} K_1$ and $\mathcal{P}$ has the same structure as $\mathcal{P}^{-1}$.
Recall that for $k \geq N$, $c_k$ is a linear combination of the vectors $\eta^j \Phi^{(j)}(\eta)$ for $j \leq k$. As $g_j$ is the dual family of the vectors $(\eta^l \Phi^{(l)}(\eta))_{0 \leq l \leq N-1}$ and $\|g_j\| = \cO( \eta^{1-N} )$, we have 
\begin{equation}
\label{eq:M}
C^{-1}_1 C_2 = M + \cO(\eta),
\end{equation}
where $M$ is a $N \times (d-N)$ matrix independent of $\eta$.
\end{proof}

\begin{proof}[Proof of Lemma \ref{lem:stab}]
Let $G_1 \in \R^{N \times N}$ and $G_2 \in \R^{(d-N) \times N}$ be the unique matrices such that
\begin{equation*}
G_\eta = 
\left( \begin{array}{c}
G_1 \\ \hline  G_2 
\end{array} \right) \in \R^{d \times N}.
\end{equation*}
By definition
\begin{equation*}
G_\eta = \itg{-1}{1}{\rho(t) Q(t) \Phi(\eta t)^T}{t}.
\end{equation*}
By Lemma \ref{lem:decomp} applied to each $\phi_j$,
\begin{equation*}
\Phi(\eta t) = \sumlim{k=0}{N-1} \left( t^{2j} - \eta \frac{Z_0}{2} \frac{|t|^{2j+1}}{2j+1} \right) \frac{\eta^{2j}}{(2j)!} \cE^j \Phi(0) + \cO(\eta^{2N}).
\end{equation*}
Let $a_j \in \R^d, a_j^N \in \R^N, a_j^{d-N} \in \R^{d-N}$ be defined by
\begin{equation*}
a_j := - \itg{-1}{1}{\rho(t) \frac{Z_0}{2} \frac{|t|^{2j+1}}{2j+1} Q(t)}{t} =: \begin{pmatrix}
a_j^N \\ a_j^{d-N} \end{pmatrix}.
\end{equation*}
Then by definition of the polynomials $Q$ we have
\begin{equation*}
\begin{cases}
G_1 = \sumlim{j=0}{N-1} (e_j + \eta a_j^N) \frac{\eta^{2j}}{(2j)!} (\cE^j \Phi(0))^T + \cO(\eta^{2N}), \\
G_2 = \sumlim{j=0}{N-1} \eta a_j^{d-N} \frac{\eta^{2j}}{(2j)!} (\cE^j \Phi(0))^T + \cO(\eta^{2N}).
\end{cases}
\end{equation*}

Let $(f_0,\dots,f_{N-1})$ be the dual basis of $\left(\cE^j \Phi(0) \frac{\eta^{2j}}{(2j)!} \right)_{0 \leq j \leq N-1}$ in $\mathbb{R}^N$ and $H_1$ be the matrix
\begin{equation*}
H_1 :=
\begin{pmatrix}
f_0 & \cdots & f_{N-1}
\end{pmatrix} \in \R^{N \times N}.
\end{equation*}
It is straightforward to see that $\| H_1 \| = \cO( \eta^{2-2N})$ and 
\begin{equation*}
\begin{cases}
G_1 H_1 = I_N + \cO(\eta) \ , \\
G_2 H_1 = \cO(\eta).
\end{cases}
\end{equation*}
The matrix $H_1$ is invertible and its inverse is 
\begin{equation*}
H_1^{-1} = \sumlim{j=0}{N-1} e_j \frac{\eta^{2j}}{(2j)!} (\cE^j \Phi(0))^T.
\end{equation*}

Let us now prove \eqref{eq:stability} for $j=0$. We have
\begin{align*}
\Phi(0)^T (C_\eta G_\eta)^{-1} C_\eta & = e_0^T H_1^{-1} \left(C^{(P)}_\eta \Pi G_\eta \right)^{-1} C^{(P)}_\eta \Pi  \\
& = e_0^T \left( C^{-1}_1 C^{(P)}_\eta \Pi G_\eta H_1 \right)^{-1} \Pi \Big( I_N \ \Big| \ C^{-1}_1 C_2 \Big) \\
& = e_0^T \left(  \Big( I_N \ \Big| \ M + \cO(\eta) \Big)   \Pi \Big( I_N \ \Big| \ \cO(\eta) \Big) ^T \right)^{-1}   \Big( I_N \ \Big| \ M + \cO(\eta) \Big)  \Pi.
\end{align*}
Decomposing $\Pi$ into four blocks
\begin{equation*}
\Pi = \begin{pmatrix}
\Pi_1 & \Pi_2 \\
\Pi_3 & \Pi_4
\end{pmatrix}, \ \text{with } \Pi_1 \in \R^{N \times N},
\end{equation*}
we obtain
\begin{align*}
\Phi(0)^T (C_\eta G_\eta)^{-1} C_\eta & = e_0^T \left(\Pi_1 + M \Pi_3 + \cO(\eta) \right)^{-1} \Big( \Pi_1 + M \Pi_3 + \cO(\eta) \ \Big| \ \Pi_2 + M\Pi_4 + \cO(\eta) \Big) \\
& = e_0^T \Big( I_N \ \Big| \ M_\pi \Big) + \cO(\eta).
\end{align*}

For $1 \leq j \leq N-1$ we proceed in the same way, using $e_j^T H^{-1}_1 = \frac{\eta^{2j}}{(2j)!} (\cE^j \Phi(0))^T$.
\end{proof}

\begin{proof}[Proof of Proposition \ref{lem:der_jump_0}]
Let $0 \leq j \leq N-1$ and let $(c_k)_{1 \leq k \leq N}$ be as in Lemma \ref{lem:approx}. Then by Lemma \ref{lem:der_jump} we have : 
\begin{align*}
[\tilde{\psi}^{(2j+1)}]_0  & =  \underbrace{(-E)^j \psi(0) - \sumlim{k=1}{N} c_k (-\epsilon_k)^j \phi_k(0)}_{=0} + \psh{A^{-1}p}{\psi - \sumlim{k=1}{N} c_k \phi_k} \cdot \cE^j \Phi(0)  \\
& = \psh{A^{-1}p}{\psi - \sumlim{k=1}{N} c_k \phi_k} \cdot \cE^j \Phi(0)  \\
& = \psh{A^{-1}p}{\psi_e - \sumlim{k=1}{N} c_k \phi_k} \cdot \cE^j \Phi(0) \ ,
\end{align*}
as $p$ is even.

Combining Lemmas \ref{lem:ApPhi} and \ref{lem:stab}, we get
\begin{equation*}
(\cE^j \Phi(0))^T A^{-1} p = \rho(t/ \eta) (\cE^j \Phi(0))^T (C_\eta G_\eta)^{-1} C_\eta Q(t/\eta) = \rho(t / \eta) x_\eta^T Q(t/ \eta) \ ,
\end{equation*}
where $\| x_\eta \| = \cO(\eta^{-2j})$. 

Using again Lemma \ref{lem:approx}, we obtain 
\begin{align*}
\left| \psh{A^{-1}p}{\psi - \sumlim{k=1}{N} c_k \phi_k} \cdot \cE^j \Phi(0) \right| & \leq C \| x_\eta\| \left\| \rho(t) \sumlim{\ell=0}{N-1} |Q_\ell(t)| \right\|_{L^1[-1,1]} \left\| \psi - \sumlim{k=1}{N} c_k \phi_k \right\|_{L^\infty[-\eta,\eta]} \\
& \leq  C \eta^{2N-2j}.
\end{align*}

We therefore obtain
\begin{equation*}
| [\tilde{\psi}^{(2j+1)}]_0 | \leq C \eta^{2N-2j}.
\end{equation*}

For $j \geq N$, we then have 
\begin{equation*}
(-E)^j \psi(0) - \sumlim{k=1}{N} c_k (-\epsilon_k)^j \phi_k(0) \not= 0, 
\end{equation*}
and by Lemmas \ref{lem:approx} and \ref{lem:stab}, 
\begin{equation*}
\left| \psh{A^{-1}p}{\psi - \sumlim{j=1}{N} c_j \phi_j} \cdot \cE^j \Phi(0) \right| \leq C \underbrace{\| (C_\eta G_\eta)^{-1} C_\eta \|}_{=\cO(\eta^{2-2N})} \underbrace{\left\| \psi - \sumlim{k=1}{N} c_k \phi_k \right\|_{L^\infty[-\eta,\eta]}}_{=\cO(\eta^{2N})} \leq C \eta^2.
\end{equation*}
We therefore have
\begin{equation*}
[\tilde{\psi}^{(2j+1)}]_0 = \underbrace{(-E)^j \psi(0) - \sumlim{k=1}{N} c_k (-\epsilon_k)^j \phi_k(0)}_{\not= 0} + \underbrace{\psh{A^{-1}p}{\psi - \sumlim{k=1}{N} c_k \phi_k} \cdot \cE^j \Phi(0)}_{= \cO(\eta^2)} . \\
\end{equation*}
Thus,
\begin{equation*}
|[\tilde{\psi}^{(2j+1)}]_0| \leq C,
\end{equation*}
which completes the proof.
\end{proof}

\subsection{$d$-th derivative jump}
 
We use the notation introduced in the previous section.

\begin{proof}[Proof of Proposition \ref{lem:der_jump_eta}]
We give the proof only for $k=d$ as the proof for $d+1 \leq k \leq 2d-2$ is very similar. By definition of $\tilde{\psi}$ and $\widetilde{\Phi}$,
\begin{align*}
[\tilde{\psi}^{(d)}]_\eta & = \psh{\tilde{p}}{\tilde{\psi}} \cdot [\widetilde{\Phi}^{(d)} ]_\eta  \\
& = \frac{1}{\eta^d} \psh{\tilde{p}}{\tilde{\psi}} \cdot ( C_\eta^{(P)} P^{(d)}(1) - \eta^d \Phi^{(d)}(\eta))  \\
& = \frac{1}{\eta^d} \psh{A^{-1}p}{\psi} \cdot ( C_\eta^{(P)} P^{(d)}(1) - \eta^d \Phi^{(d)}(\eta))  \qquad \text{(by Equation (\ref{eq:tilde_p-Ap-psi}))}, \\
& = \frac{1}{\eta^d} \itg{-1}{1}{\rho(t) \psi(\eta t) Q(t)}{t} \cdot C_\eta^T (G_\eta^T C_\eta^T)^{-1} ( C_\eta^{(P)} P^{(d)}(1) - \eta^d \Phi^{(d)}(\eta)) \ \text{(by Lemma \ref{lem:ApPhi})}.
\end{align*}

We know from \eqref{eq:columns_Ceta} that the columns of $C_\eta^{(P)}$ are linear combinations of $\eta^k \Phi^{(k)}(\eta)$. Let us apply Lemma \ref{lem:decomp} to $\Phi$. As the remainder is $C^{2d-2}$, for $k \leq d-1$, we can differentiate $k$ times and we have for $k$ even :
\begin{equation*}
\eta^k \Phi^{(k)}(\eta) = \sumlim{j= \frac{k}{2} }{N-1} \left( \frac{\eta^{2j}}{(2j-k)!} - \frac{Z_0}{2} \frac{\eta^{2j+1}}{(2j+1-k)!}  \right)  \cE^j \Phi(0) + \cO(\eta^{2N}),
\end{equation*}
and for $k$ odd, we have
\begin{equation*}
\eta^k \Phi^{(k)}(\eta) = -\frac{Z_0}{2} \eta^{k} D^{\frac{k-1}{2}}\Phi(0) + \sumlim{j= \frac{k+1}{2} }{N-1} \left( \frac{\eta^{2j}}{(2j-k)!} - \frac{Z_0}{2} \frac{\eta^{2j+1}}{(2j+1-k)!} \right) \cE^j \Phi(0) + \cO(\eta^{2N}).
\end{equation*}
But by Lemma \ref{lem:stab}, for $0 \leq k \leq 2N-2$, $k$ even, we have :
\begin{align}
C_\eta^T (G_\eta^T C_\eta^T)^{-1} \eta^k \Phi^{(k)}(\eta) & = \sumlim{j= \frac{k}{2} }{N-1} \left( \frac{\eta^{2j}}{(2j-k)!} - \frac{Z_0}{2} \frac{\eta^{2j+1}}{(2j+1-k)!}  \right) C_\eta^T (G_\eta^T C_\eta^T)^{-1} \cE^j \Phi(0) + \cO(\eta^{2}) \nonumber \\
& = \sumlim{j= \frac{k}{2} }{N-1} \frac{(2j)!}{(2j-k)!} \begin{pmatrix} I_N \\ M_\pi^T \end{pmatrix}  e_j + \cO(\eta).
\label{eqlem:GetaPhi_even}
\end{align}
Similarly for $0 \leq k \leq 2N-1$, $k$ odd :
\begin{align}
C_\eta^T (G_\eta^T C_\eta^T)^{-1} \eta^k \Phi^{(k)}(\eta) & = - \frac{Z_0}{2} C_\eta^T (G_\eta^T C_\eta^T)^{-1} \eta^{k} D^{\frac{k-1}{2}}\Phi(0) \nonumber \\
&  + \sumlim{j= \frac{k+1}{2} }{N-1} \left( \frac{\eta^{2j}}{(2j-k)!} - \frac{Z_0}{2} \frac{\eta^{2j+1}}{(2j+1-k)!} \right) C_\eta^T (G_\eta^T C_\eta^T)^{-1} \cE^j \Phi(0) + \cO(\eta^{2}) \nonumber \\
& =  \sumlim{j= \frac{k+1}{2} }{N-1} \frac{(2j)!}{(2j-k)!} \begin{pmatrix} I_N \\ M_\pi^T \end{pmatrix} e_j + \cO(\eta) \ ,
\label{eqlem:GetaPhi_odd}
\end{align}
and for $k \geq 2N$, using $\| C_\eta^T (G_\eta^T C_\eta^T)^{-1} \| = \cO(\eta^{2-2N}) $ we have
\begin{equation*}
C_\eta^T (G_\eta^T C_\eta^T)^{-1} \eta^k \Phi^{(k)}(\eta) = \cO(\eta).
\end{equation*}
We have proved that $[\tilde{\psi}^{(d)}]_\eta = \cO(\eta^{-d})$ but it is possible to have a slightly better estimate. 

Observing that $\psi$ is in fact a Lipschitz function and not only a continuous function, we have for $|t| \leq 1$ :
\begin{equation*}
\psi(\eta t) = \psi(0) + \cO(\eta).
\end{equation*}
By definition of the polynomials $Q_k$, we have
\begin{align*}
\itg{-1}{1}{\rho(t) Q(t) \psi(\eta t)}{t} & = \psi(0) \itg{-1}{1}{\rho(t) Q(t)}{t} + \cO(\eta)  \\
& = \psi(0) e_0 + \cO(\eta).
\end{align*}
To complete the proof of the proposition, it remains to show 
\begin{equation*}
e_0 \cdot C_\eta^T (G_\eta^T C_\eta^T)^{-1} ( C_\eta^{(P)} P^{(d)}(1) - \eta^d \Phi^{(d)}(\eta)) = \cO(\eta).
\end{equation*}
As we have for $j \geq 1$ 
\begin{equation*}
e_0^T \begin{pmatrix} I_N \\ M_\pi^T \end{pmatrix} e_j = 0,
\end{equation*}
then for $d \geq 2$, equations (\ref{eqlem:GetaPhi_even}) and (\ref{eqlem:GetaPhi_odd}) lead to
\begin{equation*}
e_0 \cdot C_\eta^T (G_\eta^T C_\eta^T)^{-1} \eta^d \Phi^{(d)}(\eta) = \cO(\eta).
\end{equation*}
Recall that the columns of $C_\eta^{(P)}$ satisfy the relation
\begin{equation*}
\forall 0 \leq j \leq d-1, \  c_j = \eta^j \Phi^{(j)}(\eta) - \sumlim{k=0}{j-1} P_k^{(j)}(1) c_k.
\end{equation*}
But $P^{(j)}_0(1)=0$ for  $j \geq 1$ so in fact, for all $k\geq 1$, $c_k$ is a linear combination of the vectors $\eta^j \Phi^{(j)}(\eta)$ for $ 1 \leq j \leq k$. Moreover by definition, we have 
\begin{equation*}
P^{(d)}(1) = (\underbrace{0, \dots, 0}_{\lfloor \frac{d}{2} \rfloor}, *, \dots, *)^T ,
\end{equation*}
so $C_\eta^{(P)} P^{(d)}(1)$ is a linear combination of the last $\lceil \frac{d}{2} \rceil$ columns of $C_\eta^{(P)}$. Thus $C_\eta^{(P)} P^{(d)}(1)$ is a linear combination of the vectors $\eta^j \Phi^{(j)}(\eta)$ for $ 1 \leq j \leq d-1$ and therefore in view of (\ref{eqlem:GetaPhi_even}) and (\ref{eqlem:GetaPhi_odd}), we have
\begin{equation*}
e_0^T C_\eta^T (G_\eta^T C_\eta^T)^{-1} C_\eta^{(P)} P^{(d)}(1) = \cO(\eta).
\end{equation*}

\end{proof}

\begin{proof}[Proof of Theorem~\ref{theo:FourierPsi}]
First, we need to bound the remainder $\itg{0}{1}{\tilde{\psi}^{(2d-1)}(x)e^{-2i\pi mx}}{x}$ with respect to $\eta$. $\widetilde{\Phi}$ is a vector of polynomials of degree at most $2d-2$, thus $\widetilde{\Phi}^{(2d-1)} = 0$. Thus
\begin{equation*}
\begin{split}
\left| \itg{0}{1}{\tilde{\psi}^{(2d-1)}(x)e^{-2i\pi mx}}{x} \right| & \leq \itg{0}{1}{|\psi^{(2d-1)}(x)|}{x} + \itg{-\eta}{\eta}{|\psh{\tilde{p}}{\tilde{\psi}} \cdot \Phi^{(2d-1)}(x)|}{x} \\
& + \itg{a-\eta}{a+\eta}{|\psh{\tilde{p}^a}{\tilde{\psi}} \cdot \Phi^{(2d-1)}(x-a)|}{x}.
\end{split}
\end{equation*}
We have $\psh{\tilde{p}}{\tilde{\psi}} = \psh{A^{-1}p}{\psi}$ by \eqref{eq:tilde_p-Ap-psi} and by Lemmas \ref{lem:ApPhi} and \ref{lem:stab}, 
$$
|\psh{A^{-1}p}{\psi}| \leq \frac{C}{\eta^{2N-2}},
$$
where $C$ is a positive constant independent of $\eta$. Thus
\begin{align*}
\itg{-\eta}{\eta}{|\psh{\tilde{p}}{\tilde{\psi}} \cdot \Phi^{(2d-1)}(x)| }{x} & \leq \frac{C}{\eta^{2N-3}}.
\end{align*}
Then by Proposition \ref{prop:Fourier_decay}, using the estimates \eqref{eq:der_jump_1} and \eqref{eq:der_jump_2} on the derivative jumps 
\begin{align*}
|\widehat{\tilde{\psi}}_m| & \leq C \left( \sumlim{j=0}{\lfloor \frac{d}{2} \rfloor -1} \frac{1}{(2\pi m)^{2+2j}} \left| \left[\tilde{\psi}^{(2j+1)}\right]_0 \right| + \sumlim{k=d}{2d-2} \frac{1}{(2\pi m)^{k+1}} \left| \left[\tilde{\psi}^{(k)} \right]_{\pm\eta} \right|  \right. \\
&  + \sumlim{j=0}{\lfloor \frac{d}{2} \rfloor -1} \frac{1}{(2\pi m)^{2+2j}} \left| \left[\tilde{\psi}^{(2j+1)}\right]_0 \right| + \sumlim{k=d}{2d-2} \frac{1}{(2\pi m)^{k+1}} \left| \left[\tilde{\psi}^{(k)} \right]_{a\pm\eta} \right| \\
&  + \frac{1}{(2\pi m)^{2d-1}} \left|\itg{0}{1}{\tilde{\psi}^{(2d-1)}(x)e^{-2i\pi mx}}{x} \right| \Bigg) \\
& \leq C \left( \sumlim{j=0}{\lfloor \frac{d}{2} \rfloor -1} \frac{\eta^{2N-2j}}{m^{2+2j}} + \sumlim{k=d}{2d-2} \frac{1}{\eta^{k-1} m^{k+1}} + \frac{1}{\eta^{2N-3} m^{2d-1}}\right) .
\end{align*}
Since $N \leq d$ and $m \geq \tfrac{1}{\eta}$, we have the result. 
\end{proof}

\subsection{Error bound on the eigenvalues}

To derive the estimate on the eigenvalues, we would like to use the following classical result (\cite{weinberger1974variational}, p. 68).

\begin{prop}
\label{prop:eig_error}
Let $H$ be a self-adjoint coercive $H^1$-bounded operator, $E_1 \leq \dots \leq E_n$ be the lowest eigenvalues of $H$ and $\psi_1, \dots, \psi_n$ be $L^2$-normalized associated eigenfunctions. \linebreak Let $E^{(M)}_1 \leq \dots \leq E^{(M)}_n$ be the lowest eigenvalues of the Rayleigh quotient of $H$ restricted to the subspace $V_M$ of dimension $M$. 

Let $w_k \in V_M$ for $1 \leq k \leq n$ be such that 
\begin{equation*}
\sumlim{k=1}{n} \| w_k - \psi_k \|^2_{H^1} < 1.
\end{equation*}

Then there exists a positive constant $C$ which depends on the $H^1$ norm of $H$ and the coercivity constant such that for all $1 \leq k \leq n$
\begin{equation*}
\left| E^{(M)}_k - E_k \right| \leq C \sumlim{k=1}{n} \| w_k - \psi_k \|^2_{H^1} .
\end{equation*}
\end{prop}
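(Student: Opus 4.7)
The plan is to invoke the Courant--Fischer min--max characterization:
\begin{equation*}
E_k = \min_{\substack{S \subset \mathcal{D}(H) \\ \dim S = k}} \max_{v \in S \setminus \{0\}} \frac{\psh{v}{Hv}}{\|v\|^2}, \qquad E^{(M)}_k = \min_{\substack{S \subset V_M \\ \dim S = k}} \max_{v \in S \setminus \{0\}} \frac{\psh{v}{Hv}}{\|v\|^2}.
\end{equation*}
Since $V_M \subset \mathcal{D}(H)$, the inequality $E_k \leq E^{(M)}_k$ is immediate; this actually strengthens the conclusion to the signed bound $0 \leq E^{(M)}_k - E_k$. The substance of the proposition is the matching upper bound, which I would obtain by testing the second min--max against the trial subspace $S_k := \mathrm{span}(w_1,\ldots,w_k) \subset V_M$.

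First I would check that $\dim S_k = k$. Setting $\epsilon^2 := \sum_{j=1}^n \|w_j - \psi_j\|_{H^1}^2 < 1$, if $\sum_{j=1}^k \alpha_j w_j = 0$, then $\sum_j \alpha_j \psi_j = -\sum_j \alpha_j (w_j - \psi_j)$; taking $L^2$-norms and using orthonormality of the $\psi_j$'s with Cauchy--Schwarz gives $\sum_j \alpha_j^2 \leq \epsilon^2 \sum_j \alpha_j^2$, which forces all $\alpha_j = 0$. Next I would bound the Rayleigh quotient on $S_k$: for $v = \sum_j \alpha_j w_j$, decompose $v = u + e$ with $u := \sum_j \alpha_j \psi_j$ and $e := \sum_j \alpha_j (w_j - \psi_j)$. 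Then $\|u\|^2 = \sum_j \alpha_j^2$, $\|e\|_{H^1}^2 \leq \epsilon^2 \sum_j \alpha_j^2$, and $\psh{u}{Hu} = \sum_j E_j \alpha_j^2 \leq E_k \sum_j \alpha_j^2$. Expanding
\begin{equation*}
\|v\|^2 = \|u\|^2 + 2\psh{u}{e} + \|e\|^2, \qquad \psh{v}{Hv} = \psh{u}{Hu} + 2\psh{u}{He} + \psh{e}{He},
\end{equation*}
and using the $H^1$-boundedness of $H$ together with coercivity (which will bound $\|u\|_{H^1}^2$ by a constant times $\sum_j \alpha_j^2$), every cross and error term is at most $O(\epsilon)$ times $\sum_j \alpha_j^2$. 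Dividing and using $\epsilon < 1$ to invert the denominator expansion then yields $\psh{v}{Hv}/\|v\|^2 \leq E_k + C\epsilon^2$ uniformly in $v \in S_k$, so the min--max delivers $E^{(M)}_k \leq E_k + C \epsilon^2$, which is the desired bound.

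The main obstacle will be the bookkeeping of the cross and error terms while keeping the constant $C$ dependent only on the $H^1$-boundedness and coercivity constants of $H$, and not on $M$ or on the individual $w_j$. In particular, to control $|\psh{u}{He}|$ one cannot simply use the spectral decomposition of $e$ (which lives outside any fixed finite-dimensional subspace); one must rely on $|\psh{u}{He}| \leq C \|u\|_{H^1} \|e\|_{H^1}$ and then bound $\|u\|_{H^1}$ via the spectral theorem applied on the finite-dimensional span of $\psi_1, \ldots, \psi_n$, where $\|u\|_{H^1}^2$ is controlled by $(1 + E_n)\sum_j \alpha_j^2$ up to the coercivity constant of $H$.
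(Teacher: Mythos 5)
Your skeleton (Courant--Fischer over $S_k=\mathrm{span}(w_1,\dots,w_k)$, the linear-independence argument, and the monotonicity $E_k\le E^{(M)}_k$) is the standard route to this classical Rayleigh--Ritz estimate, which the paper itself does not prove but quotes from Weinberger. However, there is a genuine gap at the decisive quantitative step. With $u=\sum_{j\le k}\alpha_j\psi_j$, $e=\sum_{j\le k}\alpha_j(w_j-\psi_j)$ and $\epsilon^2=\sum_j\|w_j-\psi_j\|^2_{H^1}$, the crude bounds you propose, $|\langle u\,,He\rangle|\le C\|u\|_{H^1}\|e\|_{H^1}=O(\epsilon)\|u\|^2$ and $|\langle u\,,e\rangle|=O(\epsilon)\|u\|^2$, only give a numerator $\le (E_k+C\epsilon)\|u\|^2$ over a denominator $\ge(1-\epsilon)^2\|u\|^2$; dividing and expanding the denominator cannot upgrade a first-order error into a second-order one, so this argument proves $E^{(M)}_k\le E_k+C\epsilon$, not $E_k+C\epsilon^2$. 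The quadratic dependence is precisely the content of the proposition (and what the paper needs: a linear bound would halve the rates in the eigenvalue estimate of Theorem \ref{theo:energies}), and it does not follow from term-by-term estimates: the individual cross terms really are of size $\epsilon$, and the gain comes from a cancellation reflecting the stationarity of the Rayleigh quotient at the eigenfunctions, which your sketch never invokes.

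The missing step can be repaired as follows. Using $H\psi_j=E_j\psi_j$, write for $v=u+e$
\begin{equation*}
\langle v\,,Hv\rangle-E_k\|v\|^2=\sum_{j=1}^{k}(E_j-E_k)\alpha_j^2+2\sum_{j=1}^{k}(E_j-E_k)\alpha_j\langle \psi_j\,,e\rangle+\langle e\,,He\rangle-E_k\|e\|^2 ,
\end{equation*}
so that the first-order cross terms appear multiplied by the nonpositive factors $E_j-E_k$; completing the square in each $j$ gives
\begin{equation*}
(E_j-E_k)\bigl(\alpha_j^2+2\alpha_j\langle\psi_j\,,e\rangle\bigr)\le (E_k-E_j)\,\langle\psi_j\,,e\rangle^2 ,
\end{equation*}
and since $|\langle\psi_j\,,e\rangle|\le\|e\|\le\epsilon\|u\|$ and $|\langle e\,,He\rangle|+E_k\|e\|^2\le C\epsilon^2\|u\|^2$, the whole numerator is $O(\epsilon^2)\|u\|^2$, whence $R(v)-E_k\le C\epsilon^2/(1-\epsilon)^2$ and the min--max concludes. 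Two minor points remain: the constant obtained this way degrades as $\epsilon\to1$ (so one should either assume $\epsilon\le\epsilon_0<1$ or accept this dependence, an imprecision already present in the statement), and your constant also involves $E_n$ through $\|u\|_{H^1}$, i.e.\ it depends on $n$ and not only on the continuity and coercivity constants; both issues are harmless for the application in the paper, unlike the first-order/second-order confusion above.
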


We would like to apply this result to $\psi_M = (\mathrm{Id}+T) \tilde{\psi}_M$ where $\tilde{\psi}_M$ is the truncation of $\tilde{\psi}$ to the first $M$ plane-waves but we need to bound the $H^1$ norm of $T$ with respect to $\eta$. Coercivity for our one-dimensional model has been proved in \cite{cances2017discretization}. To find this bound, we will need to rewrite $Tf$ in a convenient way.

\begin{lem}
\label{lem:Tconvenient}
For $f \in H^1_\mathrm{per}(0,1)$, we have for $|x| \leq \eta$:
\begin{multline*}
Tf(x) = \psh{\tilde{p}}{f} \cdot \left( \Phi(x) - \widetilde{\Phi}(x)\right) \\
 = \left(C_\eta^{(P)}\right)^T \left( C_\eta^{(P)} G(P) \left(C_\eta^{(P)}\right)^T \right)^{-1} C_\eta^{(P)}\itg{-1}{1}{\rho(t) f(\eta t) P(t)}{t} \cdot \left( \begin{pmatrix} C_1^{-1} \\ 0 \end{pmatrix} \Phi(x) - P(x/\eta) \right) ,
\end{multline*}
where $G(P)$ is the following Gram matrix :
\begin{equation*}
G(P) = \itg{-1}{1}{\rho(t) P(t) P(t)^T}{t} ,
\end{equation*}
and $C_1 \in \R^{N \times N}$ is the square matrix defined in Lemma \ref{lem:C_eta}.
\end{lem}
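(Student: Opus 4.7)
The plan is to unpack the definition of $Tf(x)$ for $|x|\leq\eta$ and then carry out a short sequence of linear-algebra manipulations, moving between the bases $Q$ and $P$ via the decomposition $C_\eta^{(P)}=(C_1\,|\,C_2)$ already exploited in Lemma~\ref{lem:C_eta}.

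First I would observe that on $[-\eta,\eta]$, the function $T_af$ vanishes since $\phi_i^a-\tilde\phi_i^a$ is supported away from the origin. Hence $Tf(x)=T_0f(x)=\psh{\tilde p}{f}\cdot\bigl(\Phi(x)-\widetilde\Phi(x)\bigr)$. The definition $\tilde p_k=\sum_j(B^{-1})_{kj}p_j$ gives in vector form $\psh{\tilde p}{f}=B^{-1}\psh{p}{f}$. Now, using $p(t)=\rho(t/\eta)\widetilde\Phi(t)=\rho(t/\eta)C_\eta^{(P)}P(t/\eta)$ and the change of variable $t=\eta s$, a direct computation yields
\begin{equation*}
\psh{p}{f}=\eta\,C_\eta^{(P)}\itg{-1}{1}{\rho(s)P(s)f(\eta s)}{s},\qquad B=\eta\,C_\eta^{(P)}G(P)\bigl(C_\eta^{(P)}\bigr)^T,
\end{equation*}
so that
\begin{equation*}
\psh{\tilde p}{f}=\Bigl(C_\eta^{(P)}G(P)\bigl(C_\eta^{(P)}\bigr)^T\Bigr)^{-1}C_\eta^{(P)}\itg{-1}{1}{\rho(s)P(s)f(\eta s)}{s}.
\end{equation*}

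The second step is to re-express $\Phi(x)-\widetilde\Phi(x)$ as the image under $C_\eta^{(P)}$ of a $d$-vector. The key identity is the "lift" relation $C_\eta^{(P)}\!\begin{pmatrix}C_1^{-1}\\0\end{pmatrix}=(C_1\,|\,C_2)\begin{pmatrix}C_1^{-1}\\0\end{pmatrix}=I_N$, which gives $\Phi(x)=C_\eta^{(P)}\begin{pmatrix}C_1^{-1}\\0\end{pmatrix}\Phi(x)$. Combined with $\widetilde\Phi(x)=C_\eta^{(P)}P(x/\eta)$, this produces
\begin{equation*}
\Phi(x)-\widetilde\Phi(x)=C_\eta^{(P)}\!\left(\begin{pmatrix}C_1^{-1}\\0\end{pmatrix}\Phi(x)-P(x/\eta)\right).
\end{equation*}

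Finally I would substitute the two expressions into $Tf(x)=\psh{\tilde p}{f}\cdot(\Phi(x)-\widetilde\Phi(x))$ and use the elementary identity $u\cdot(Mv)=(M^Tu)\cdot v$ to transfer the matrix $C_\eta^{(P)}$ from the right factor onto the left factor; the symmetry of the Gram matrix $C_\eta^{(P)}G(P)(C_\eta^{(P)})^T$ makes the inverse symmetric as well, so the rearrangement is clean and yields exactly the claimed formula.

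The computation is essentially bookkeeping, with no deep estimate required; the only non-automatic ingredient is the lift identity $C_\eta^{(P)}\begin{pmatrix}C_1^{-1}\\0\end{pmatrix}=I_N$, which is what makes it possible to factor the $N$-vector $\Phi(x)-\widetilde\Phi(x)$ through the $N\times d$ matrix $C_\eta^{(P)}$ and thereby match the $d$-dimensional right-hand factor $\begin{pmatrix}C_1^{-1}\\0\end{pmatrix}\Phi(x)-P(x/\eta)$ in the statement. Keeping track of the $N$ versus $d$ dimensions throughout is the only point where one has to be careful.
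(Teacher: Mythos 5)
Your proposal is correct and follows essentially the same route as the paper: compute $B=\eta\, C_\eta^{(P)} G(P) (C_\eta^{(P)})^T$ and $\psh{p}{f}=\eta\, C_\eta^{(P)}\int_{-1}^{1}\rho(s)P(s)f(\eta s)\,\mathrm{d}s$ by the change of variables $t=\eta s$, then use the lift identity $C_\eta^{(P)}\begin{pmatrix} C_1^{-1}\\ 0\end{pmatrix}=I_N$ together with $\widetilde{\Phi}(x)=C_\eta^{(P)}P(x/\eta)$ and transpose. The only cosmetic difference is that you assemble $\Phi-\widetilde{\Phi}$ into a single $d$-vector before transposing, whereas the paper treats the $\widetilde{\Phi}$ and $\Phi$ contributions separately; the invoked symmetry of the Gram matrix is not actually needed, but this is harmless.
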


\begin{proof}
For $|x| \leq \eta$, we have :
\begin{align*}
(Tf)(x) & = \psh{\tilde{p}}{f} \cdot (\Phi(x) - \widetilde{\Phi}(x)) \\
& = \psh{B^{-1}p}{f} \cdot (\Phi(x) - \widetilde{\Phi}(x)) .
\end{align*}
Recall that
\begin{align*}
B & = \psh{p}{\widetilde{\Phi}^T}  \\
& = \psh{\rho(t/\eta) \widetilde{\Phi}}{\widetilde{\Phi}^T}  \\
& = \eta \itg{-1}{1}{\rho(t) C_\eta^{(P)} P(t) P(t)^T \left(C_\eta^{(P)}\right)^T}{t}  \\
& = \eta C_\eta^{(P)} G(P) \left(C_\eta^{(P)}\right)^T.
\end{align*}
Thus,
\begin{align*}
\psh{B^{-1}p}{f} \cdot  \widetilde{\Phi}(x) & = \left( C_\eta^{(P)} G(P) \left(C_\eta^{(P)}\right)^T \right)^{-1} C_\eta^{(P)} \itg{-1}{1}{\rho(t) f(\eta t) P(t)}{t} \cdot C_\eta^{(P)} P(x/\eta) \\
& = \left(C_\eta^{(P)}\right)^T \left( C_\eta^{(P)} G(P) \left(C_\eta^{(P)}\right)^T \right)^{-1} C_\eta^{(P)} \itg{-1}{1}{\rho(t) f(\eta t) P(t)}{t} \cdot P(x/\eta).
\end{align*}
Using the identity 
\begin{equation*}
C_\eta^{(P)} \left( \begin{array}{c}
C_1^{-1} \\ \hline  0 
\end{array} \right) = \Big( C_1 \ | \ C_2 \Big) \left( \begin{array}{c}
C_1^{-1} \\ \hline  0 
\end{array} \right) = I_N ,
\end{equation*}
we can formally rewrite $\psh{B^{-1}p}{f}$ as
\begin{equation}
\label{eq:T_Phi}
\psh{B^{-1}p}{f} \cdot  \Phi(x)  = \left(C_\eta^{(P)}\right)^T \left(C_\eta^{(P)} G(P) \left(C_\eta^{(P)}\right)^T\right)^{-1} C_\eta^{(P)} \itg{-1}{1}{\rho(t) f(\eta t) P(t)}{t} \cdot \left( \begin{array}{c}
C_1^{-1} \\ \hline  0 
\end{array} \right) \Phi(x) ,
\end{equation}
and the result follows.
\end{proof}

\begin{lem}
\label{lem:Tf_L2}
There exists a positive constant $C$ independent of $\eta$ such that for all \linebreak $f \in H^1_\mathrm{per}(0,1)$ and for all $x \in \R$, we have
\begin{equation*}
\forall \, 0 < \eta \leq \eta_0, \ \left| \psh{\tilde{p}}{f} \cdot \left( \Phi(x) - \widetilde{\Phi}(x)\right) \right| \leq C \eta \| f \|_{H^1_\mathrm{per}} .
\end{equation*}
\end{lem}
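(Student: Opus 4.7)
The plan is to use the explicit factorisation provided by Lemma \ref{lem:Tconvenient}, combined with the one-dimensional Sobolev embedding $H^1_\mathrm{per}(0,1)\hookrightarrow L^\infty_\mathrm{per}(0,1)$, and then to exploit the cancellations built into the Hermite matching condition $\widetilde\Phi^{(l)}(\eta)=\Phi^{(l)}(\eta)$ for $l\le d-1$. First I would observe that for $|x|$ outside $\bigcup_k[-\eta+k,\eta+k]$ the functions $\Phi$ and $\widetilde\Phi$ coincide by construction, so $Tf(x)=0$ there and the inequality is trivial. On the remaining range $|x|\le\eta$, Lemma \ref{lem:Tconvenient} factorises
\[
Tf(x)=v(f)^{\mathsf T}\,u(x),\quad v(f):=\int_{-1}^{1}\rho(t)\,f(\eta t)\,P(t)\,\mathrm dt,\quad u(x):=(C_\eta^{(P)})^{\mathsf T}\bigl(C_\eta^{(P)}G(P)(C_\eta^{(P)})^{\mathsf T}\bigr)^{-1}\bigl(\Phi(x)-\widetilde\Phi(x)\bigr).
\]

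The $f$-factor is the easy one: the embedding $H^1_\mathrm{per}(0,1)\hookrightarrow L^\infty_\mathrm{per}(0,1)$ (valid in one dimension) gives $\|v(f)\|\le C\|f\|_{L^\infty}\le C\|f\|_{H^1_\mathrm{per}}$ uniformly in $\eta$. The whole content of the lemma therefore reduces to controlling the pairing $v(f)^{\mathsf T} u(x)$ by $C\eta\|f\|_{H^1}$. To extract this, I would proceed as in the proofs of Propositions \ref{lem:der_jump_0} and \ref{lem:der_jump_eta}. Splitting $C_\eta^{(P)}=[\,C_1\mid C_2\,]$ according to Lemma \ref{lem:C_eta} and using $C_1^{-1}C_2=M_\pi+\cO(\eta)$, the central matrix factorises as $C_\eta^{(P)}G(P)(C_\eta^{(P)})^{\mathsf T}=C_1(G_\mathrm{eff}+\cO(\eta))C_1^{\mathsf T}$ with $G_\mathrm{eff}$ constant and invertible, while the combination $(C_\eta^{(P)})^{\mathsf T}C_1^{-\mathsf T}=(C_1^{-1}C_\eta^{(P)})^{\mathsf T}=[I_N\mid M_\pi]^{\mathsf T}+\cO(\eta)$ is uniformly bounded and absorbs the $\cO(\eta^{1-N})$ growth of $\|C_1^{-1}\|$ entering from the left. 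The remaining smallness must come from the right factor $C_1^{-1}\bigl(\Phi(x)-\widetilde\Phi(x)\bigr)=\mathcal P K_1\bigl(\Phi(x)-\widetilde\Phi(x)\bigr)$, where the rows $g_k$ of $K_1$ are dual to $(\eta^l\Phi^{(l)}(\eta))_{0\le l\le N-1}$ (Lemma \ref{lem:C_eta}). The Hermite matching condition makes the Taylor expansion of $\Phi(x)-\widetilde\Phi(x)$ at $x=\eta$ vanish to order $d-1$, so that
\[
g_k\cdot\bigl(\Phi(x)-\widetilde\Phi(x)\bigr)=\sum_{l\ge d}\frac{(x-\eta)^l}{l!}\,g_k\cdot\bigl(\Phi^{(l)}(\eta)-\widetilde\Phi^{(l)}(\eta)\bigr);
\]
combining this with the duality relations $g_k\cdot\eta^l\Phi^{(l)}(\eta)=\delta_{kl}$, the differential equation $\phi_i''=-\epsilon_i\phi_i$ (which lets one reduce the derivatives $\Phi^{(l)}(\eta)$ of order $l\ge N$ to the basis $(\Phi^{(j)}(\eta))_{j\le N-1}$), and the Taylor expansion of $\Phi$ near $0$ from Lemma \ref{lem:decomp}, produces the desired $\cO(\eta)$ bound after pairing against $v(f)$.

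The main obstacle is the book-keeping of $\eta$-powers along the chain of matrix manipulations. A naive application of $\|(C_\eta^{(P)}G(P)(C_\eta^{(P)})^{\mathsf T})^{-1}\|=\cO(\eta^{2-2N})$ from Lemma \ref{lem:stab}, together with $\|\Phi-\widetilde\Phi\|_{L^\infty[-\eta,\eta]}=\cO(1)$, would only give an estimate of order $\cO(\eta^{2-2N})$, which is useless for $N\ge 2$. The delicate point is therefore to exhibit, \emph{uniformly in} $x\in[-\eta,\eta]$, the precise cancellations between the blow-ups of $C_1^{-1}$, $M^{-1}$ and the smallness of $\Phi(x)-\widetilde\Phi(x)$ induced by the Hermite conditions. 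This parallels the case-by-case technical work in Propositions \ref{lem:der_jump_0} and \ref{lem:der_jump_eta}, but lifted from pointwise estimates at $x=0$ or $x=\eta$ to a uniform-in-$x$ bound on the whole interval $[-\eta,\eta]$.
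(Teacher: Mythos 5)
Your starting point (the factorisation of $Tf$ from Lemma \ref{lem:Tconvenient}, and the block manipulations with $C_1$, $C_1^{-1}C_2$ from Lemma \ref{lem:C_eta}) is the same as the paper's, but there is a genuine gap in where you extract the factor $\eta$. After bounding $\|v(f)\|\leq C\|f\|_{H^1_\mathrm{per}}$ you have reduced the lemma to the claim that the $x$-dependent factor, in coefficient space $C_1^{-1}\bigl(\Phi(x)-\widetilde\Phi(x)\bigr)$, is $\cO(\eta)$ \emph{in every component, uniformly on} $[-\eta,\eta]$. Your proposed mechanism for this -- the Hermite matching makes the Taylor expansion of $\Phi-\widetilde\Phi$ at $x=\eta$ start at order $d$ -- does not produce any power of $\eta$: for $x$ in the bulk of the interval one has $|x-\eta|\sim\eta$ while $\widetilde\Phi^{(l)}(\eta)=\eta^{-l}C_\eta^{(P)}P^{(l)}(1)=\cO(\eta^{-l})$, so each term $\frac{(x-\eta)^l}{l!}\,g_k\cdot\bigl(\Phi^{(l)}(\eta)-\widetilde\Phi^{(l)}(\eta)\bigr)$ is only of size $\cO(\|g_k\|)$, and with the bound on $\|g_k\|$ available from Lemma \ref{lem:C_eta} this gives $\cO(\eta^{1-N})$, not $\cO(\eta)$. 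The further ``cancellations'' you invoke (duality relations, $\phi_i''=-\epsilon_i\phi_i$, Lemma \ref{lem:decomp}) are precisely the content that would have to be proved; nothing in the sketch exhibits them, and they constitute the whole difficulty of the lemma. (Two smaller points: for $x<0$ the Taylor expansion of $\Phi$ at $\eta$ is not valid across the kink at $0$, so one must first reduce to $x\in[0,\eta]$ by parity; and ``$M^{-1}$'' is undefined, since $M$ is $N\times(d-N)$.)

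The paper's proof avoids proving any such uniform smallness of $\Phi-\widetilde\Phi$ in coefficient space, and this is the essential difference. There, the factor $\eta$ comes from the $f$-side: $\int_{-1}^{1}\rho(t)f(\eta t)P(t)\,\mathrm{d}t=f(0)G(P)e_0+\cO(\eta)\|f\|_{H^1_\mathrm{per}}$, obtained by Cauchy--Schwarz on $f(\eta t)-f(0)=\int_0^{\eta t}f'$ -- this is where the $H^1$ regularity is genuinely used, whereas your argument only ever uses $\|f\|_{L^\infty}$. The remaining $\eta$-free rank-one contribution $f(0)G(P)e_0$ is then killed by orthogonality, not by smallness of the $x$-factor: the matrix $G(P)^{1/2}\bigl(C_\eta^{(P)}\bigr)^{T}\bigl(C_\eta^{(P)}G(P)\bigl(C_\eta^{(P)}\bigr)^{T}\bigr)^{-1}C_\eta^{(P)}G(P)^{1/2}$ is an orthogonal projector (hence uniformly bounded in $\eta$ -- this is how the naive $\cO(\eta^{2-2N})$ blow-up is circumvented), its action on $G(P)e_0$ stays along $e_0$ up to $\cO(\eta)$, and only the \emph{first} component of the vector built from $C_1^{-1}\Phi(x)$ and $P(x/\eta)$ is shown to be $\cO(\eta)$, the remaining components being merely bounded. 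By discarding the structure of $v(f)$ at the outset, you force yourself to prove that \emph{all} components of the $x$-factor are $\cO(\eta)$ -- a strictly stronger statement than anything the paper establishes -- and the proposal does not prove it. To repair the argument along the paper's lines, keep the decomposition $v(f)=f(0)G(P)e_0+\cO(\eta)\|f\|_{H^1_\mathrm{per}}$ and prove the four statements of the paper's proof (boundedness of the projector, the $\cO(\eta)$ bound on its off-$e_0$ components applied to $G(P)e_0$, and the $\cO(\eta)$ bound on the $e_0$-component of the $x$-factor) rather than a uniform bound on $C_1^{-1}\bigl(\Phi(x)-\widetilde\Phi(x)\bigr)$.
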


\begin{proof} In this proof, $C$ denotes a generic constant that does not depend on $\eta$ or $f$. Let $f\in H^1_\mathrm{per}(0,1)$. On $|x| \geq \eta$, we have by definition of $\widetilde{\Phi}$ 
\begin{equation*}
\psh{\tilde{p}}{f} \cdot \left( \Phi(x) - \widetilde{\Phi}(x)\right) = 0.
\end{equation*}
We deduce from Lemma \ref{lem:Tconvenient} that for $|x| \leq \eta$ we have 
\begin{multline*}
\psh{\tilde{p}}{f} \cdot \left( \Phi(x) - \widetilde{\Phi}(x)\right) = \left(C_\eta^{(P)}\right)^T \left( C_\eta^{(P)} G(P) \left(C_\eta^{(P)}\right)^T \right)^{-1} C_\eta^{(P)}\itg{-1}{1}{\rho(t) f(\eta t) P(t)}{t} \\
\cdot \left( \left( \begin{array}{c}
C_1^{-1} \\ \hline  0 
\end{array} \right) \Phi(x) - P(x/\eta) \right).
\end{multline*}
The proof of the lemma consists of four steps. We will successively show that 
\begin{enumerate}
\item $\left( \begin{array}{c}
C_1^{-1} \\ \hline  0 
\end{array} \right) \Phi(x) - P(x/\eta) = \begin{pmatrix} 0 \\ * \end{pmatrix} + \cO(\eta)$, where $\begin{pmatrix} 0 \\ * \end{pmatrix}$ is uniformly bounded in $\eta$ and $x$;
\item $\itg{-1}{1}{\rho(t) f(\eta t) P(t)}{t} = f(0) G(P) e_0 + \cO(\eta) \|f\|_{H^1_\mathrm{per}}$;
\item the norm of $\left(C_\eta^{(P)}\right)^T \left( C_\eta^{(P)} G(P) \left(C_\eta^{(P)}\right)^T \right)^{-1} C_\eta^{(P)}$ is uniformly bounded in $\eta$;
\item for $j~\geq~1$, $e_j^T \left(C_\eta^{(P)}\right)^T \left( C_\eta^{(P)} G(P) \left(C_\eta^{(P)}\right)^T \right)^{-1} C_\eta^{(P)} G(P) e_0$ is of order $\cO(\eta)$.
\end{enumerate}
Indeed, assuming these statements hold, we can infer from statement 2 that
\begin{multline*}
\left| \psh{\tilde{p}}{f} \cdot \left( \Phi(x) - \widetilde{\Phi}(x)\right) \right| \\
\leq |f(0)| \left| \left(C_\eta^{(P)}\right)^T \left( C_\eta^{(P)} G(P) \left(C_\eta^{(P)}\right)^T \right)^{-1} C_\eta^{(P)} G(P) e_0 \cdot \left( \left( \begin{array}{c}
C_1^{-1} \\ \hline  0 
\end{array} \right) \Phi(x) - P(x/\eta) \right) \right| \\
+ \cO(\eta) \|f\|_{H^1_\mathrm{per}} \left| \left(C_\eta^{(P)}\right)^T \left( C_\eta^{(P)} G(P) \left(C_\eta^{(P)}\right)^T \right)^{-1} C_\eta^{(P)} \left( \left( \begin{array}{c}
C_1^{-1} \\ \hline  0 
\end{array} \right) \Phi(x) - P(x/\eta) \right) \right|.
\end{multline*}
We treat both terms separately. For the second term, by statements 1 and 3, we have
\begin{equation*}
\left| \left(C_\eta^{(P)}\right)^T \left( C_\eta^{(P)} G(P) \left(C_\eta^{(P)}\right)^T \right)^{-1} C_\eta^{(P)} \left( \left( \begin{array}{c}
C_1^{-1} \\ \hline  0 
\end{array} \right) \Phi(x) - P(x/\eta) \right) \right| \leq C.
\end{equation*}
For the first one, by statement 1, we only have to check that for $j \geq 1$, we have
\begin{equation*}
\left| \left(C_\eta^{(P)}\right)^T \left( C_\eta^{(P)} G(P) \left(C_\eta^{(P)}\right)^T \right)^{-1} C_\eta^{(P)} G(P) e_0 \cdot e_j \right| \leq C \eta,
\end{equation*}
which is exactly statement 4. The lemma is then proved using the Sobolev embedding \linebreak $\| f \|_{L^\infty_\mathrm{per}} \leq C \| f \|_{H^1_\mathrm{per}}$. 

\paragraph{Step 1}
Writing down the Taylor expansion of $\Phi$ at $\eta$, we obtain
\begin{align*}
\Phi(x) & = \sumlim{k=0}{N-1} \frac{(x-\eta)^k}{k!} \Phi^{(k)}(\eta) + \cO((x-\eta)^N)  \\
& = \sumlim{k=0}{N-1} \frac{1}{k!} \left(\frac{x}{\eta} - 1 \right)^k \eta^k \Phi^{(k)}(\eta) + \cO((x-\eta)^N) .
\end{align*}
By Lemma \ref{lem:C_eta}, we have 
\begin{equation*}
C_1^{-1} \eta^k \Phi^{(k)}(\eta) = \mathcal{P} K_1 \eta^k \Phi^{(k)}(\eta) = \mathcal{P} e_k ,
\end{equation*}
and for $k \not= 0$, $\mathcal{P}e_k \cdot e_0 =0$. We also know that $ \| C_1^{-1} \| = \cO(\eta^{1-N})$, so that
\begin{equation*}
C_1^{-1} \Phi(x) = \begin{pmatrix} 1 \\ * \end{pmatrix} + \cO(\eta).
\end{equation*}
By definition $P_0 = 1$, and therefore
\begin{equation*}
\left( \begin{array}{c}
C_1^{-1} \\ \hline  0 
\end{array} \right) \Phi(x) - P(x/\eta) = \begin{pmatrix} 0 \\ * \end{pmatrix} + \cO(\eta).
\end{equation*}

\paragraph{Step 2}
Since $f \in H^1_\mathrm{per}(0,1)$, by the Sobolev embedding theorem, $f$ is continuous and $f(0)$ exists. Thus we can write
\begin{align*}
\itg{-1}{1}{\rho (t) f(\eta t) P(t)}{t} & = f(0) \itg{-1}{1}{\rho (t) P(t)}{t} + \itg{-1}{1}{\rho (t) (f(\eta t)-f(0)) P(t)}{t}  \\
& = f(0) G(P) e_0 + \itg{-1}{1}{\rho (t) (f(\eta t)-f(0)) P(t)}{t} ,
\end{align*}
since $P_0 = 1$. Using 
$$
f(\eta t) = f(0) + \itg{0}{\eta t}{f'(x)}{x}, 
$$ 
and Cauchy-Schwarz inequality, we obtain
\begin{align*}
\left| \itg{-1}{1}{\rho (t) (f(\eta t)-f(0)) P(t)}{t} \right| & \leq \left( \itg{-1}{1}{\rho(t)^2 P^2(t)}{t} \itg{-1}{1}{\left(\itg{0}{\eta t}{f'(x)}{x}\right)^2}{t} \right)^{1/2} \\
& \leq C \left( \itg{-1}{1}{\left(\itg{0}{\eta t}{f'(x)^2}{x}\right) \eta^2 t^2 }{t} \right)^{1/2} \\
& \leq C \eta \|f\|_{H^1_\mathrm{per}} .
\end{align*}

\paragraph{Step 3}
We want to bound the norm of the matrix 
\begin{equation*}
\left(C_\eta^{(P)}\right)^T \left( C_\eta^{(P)} G(P) \left(C_\eta^{(P)}\right)^T \right)^{-1} C_\eta^{(P)}.
\end{equation*}
Since $G(P)$ is the Gram matrix of the polynomials $P_k$ for the weight $\rho$, $G(P)$ is a symmetric positive definite matrix and thus admits a square root. It is easy to check that 
\begin{equation*}
G(P)^{1/2} \left(C_\eta^{(P)}\right)^T \left( C_\eta^{(P)} G(P) \left(C_\eta^{(P)}\right)^T \right)^{-1} C_\eta^{(P)} G(P)^{1/2} 
\end{equation*}
is an orthogonal projector. Its norm is therefore uniformly bounded in $\eta$. 

\paragraph{Step 4}
Let $G_1 \in \R^{N \times N}$, $G_2 \in \R^{N \times (d-N)}$ and $G_3 \in \R^{(d-N) \times (d-N)}$ be the matrices respectively defined by
\begin{equation*}
G(P) = \begin{pmatrix}
G_1 & G_2 \\
G_2^T & G_3
\end{pmatrix}.
\end{equation*}
Let $M_\eta$ be the matrix 
\begin{equation*}
C_1^{-1} C_2 = M_\eta.
\end{equation*}
Recall that by Lemma \ref{lem:C_eta}, $\| M_\eta \| \leq C$. With this notation, we have
\begin{equation*}
C_\eta^{(P)} G(P) \left(C_\eta^{(P)}\right)^T = C_1 \left( G_1 + M_\eta G_2^T + G_2 M_\eta^T + M_\eta G_3 M_\eta^T \right) ,
\end{equation*}
and therefore, 
\begin{multline}
\label{eq:moche}
\left(C_\eta^{(P)}\right)^T \left( C_\eta^{(P)} G(P) \left(C_\eta^{(P)}\right)^T \right)^{-1} C_\eta^{(P)} e_0 \\
= \begin{pmatrix} I_N \\ M_\eta \end{pmatrix} \left( G_1 + M_\eta G_2^T + G_2 M_\eta^T + M_\eta G_3 M_\eta^T \right)^{-1} \left( G_1 + M_\eta G_2^T \right) e_0 .
\end{multline}
We will now show that $M_\eta^T e_0 = \cO(\eta)$. By definition,
\begin{equation*}
e_0^T M_\eta = e_0^T C_1^{-1} C_2 = e_0^T \mathcal{P} K_1 C_2 .
\end{equation*}
By Lemma \ref{lem:C_eta}, $e_0^T \mathcal{P} = e_0$ and by definition of $K_1$, $e_0^T K_1 = g_0^T$ where $g_0$ is the vector satisfying $g_0^T \eta^k \Phi^{(k)} (\eta) = \delta_{0k}$ for $k \leq N-1$.
Again by Lemma \ref{lem:C_eta}, the columns of $C_\eta^{(P)}$ satisfy : 
\begin{equation*}
\forall 0 \leq j \leq d-1, \  c_j = \eta^j \Phi^{(j)}(\eta) - \sumlim{k=0}{j-1} P_k^{(j)}(1) c_k ,
\end{equation*}
with $P_0^{(j)}(1) =0$ for $j \geq 1$. Consequently $c_j$ is a linear combination of the vectors $\eta^k \Phi^{(k)}(\eta)$ for $1 \leq k \leq j$. Since $\|g_0\| = \cO(\eta^{1-N})$, we get $g_0^T C_2 = \cO(\eta)$.
Coming back to Equation (\ref{eq:moche}), we have 
\begin{equation*}
\left(C_\eta^{(P)}\right)^T \left( C_\eta^{(P)} G(P) \left(C_\eta^{(P)}\right)^T \right)^{-1} C_\eta^{(P)} e_0 = \begin{pmatrix} I_N \\ M_\eta^T \end{pmatrix} e_0 + \cO(\eta).
\end{equation*}
Consequently,
\begin{align*}
e_j^T \left(C_\eta^{(P)}\right)^T \left( C_\eta^{(P)} G(P) \left(C_\eta^{(P)}\right)^T \right)^{-1} C_\eta^{(P)} e_0 & = e_j^T \begin{pmatrix} I_N \\ M_\eta^T \end{pmatrix} e_0 + \cO(\eta) \\
& = \begin{cases}
\cO(\eta), & \ 0 \leq j \leq N-1 , \\
e_{j-N}^T M_\eta^T e_0 = \cO(\eta), & \ N \leq j \leq d-1.
\end{cases}
\end{align*}
\end{proof}


We can establish a similar result for $\psh{\tilde{p}}{f} \cdot \left( \Phi'(x) - \widetilde{\Phi}'(x)\right)$.
\begin{lem}
\label{lem:Tf_prime}
There exists a positive constant $C$ independent of $\eta$ such that for all \linebreak $f \in H^1_\mathrm{per}(0,1)$ and for all $x \in \R$, we have\begin{equation*}
\forall \, 0 < \eta \leq \eta_0, \  \left| \psh{\tilde{p}}{f} \cdot \left( \Phi'(x) - \widetilde{\Phi}'(x)\right) \right| \leq C \| f \|_{H^1_\mathrm{per}} , 
\end{equation*}
\end{lem}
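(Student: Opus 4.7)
The plan is to mimic the proof of Lemma \ref{lem:Tf_L2}, keeping most of the argument intact and only modifying the part that deals with the $x$-dependent vector. Since $\Phi(x)-\widetilde\Phi(x)$ vanishes outside $[-\eta,\eta]$, so does its derivative, and it suffices to work on $|x|\leq\eta$. Differentiating the identity of Lemma \ref{lem:Tconvenient} with respect to $x$ (the matrix prefactors do not depend on $x$) gives
\begin{equation*}
\psh{\tilde p}{f}\cdot\bigl(\Phi'(x)-\widetilde\Phi'(x)\bigr) = (Mu)^T w'_x,
\end{equation*}
with $M=(C_\eta^{(P)})^T(C_\eta^{(P)} G(P)(C_\eta^{(P)})^T)^{-1} C_\eta^{(P)}$, $u=\int_{-1}^1\rho(t)f(\eta t)P(t)\,\mathrm{d}t$, and the new vector
\begin{equation*}
w'_x := \begin{pmatrix} C_1^{-1} \\ 0 \end{pmatrix}\Phi'(x) - \frac{1}{\eta}P'(x/\eta).
\end{equation*}

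Steps 2, 3 and 4 from the proof of Lemma \ref{lem:Tf_L2} transfer verbatim: $u=f(0)G(P)e_0+\cO(\eta)\|f\|_{H^1_\mathrm{per}}$, $\|M\|=\cO(1)$, and $M G(P)e_0=e_0+\cO(\eta)$ (using $M_\eta^T e_0=\cO(\eta)$). Combined with $|f(0)|\leq C\|f\|_{H^1_\mathrm{per}}$, these give
\begin{equation*}
Mu = f(0)\,e_0 + \cO(\eta)\|f\|_{H^1_\mathrm{per}}.
\end{equation*}
The only piece that genuinely requires a new argument is the structure of $w'_x$. Taylor-expanding $\Phi$ about $\eta$ (where it is analytic), differentiating once and invoking the identity $C_1^{-1}\eta^k\Phi^{(k)}(\eta)=\mathcal{P}e_k$ from Lemma \ref{lem:C_eta}, one obtains
\begin{equation*}
C_1^{-1}\Phi'(x)=\frac{1}{\eta}\sum_{k=1}^{N-1}\frac{(x/\eta-1)^{k-1}}{(k-1)!}\mathcal{P}e_k+\cO(1).
\end{equation*}
Since $\mathcal{P}$ has $e_0$ as its first row, $\mathcal{P}e_k$ has zero first component for $k\geq 1$; similarly $P_0\equiv 1$ gives $e_0^T P'(x/\eta)=0$. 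Hence
\begin{equation*}
w'_x = \frac{1}{\eta}\begin{pmatrix} 0 \\ * \end{pmatrix} + \cO(1), \qquad e_0^T w'_x = \cO(1), \qquad \|w'_x\|=\cO(1/\eta).
\end{equation*}

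Putting the estimates together,
\begin{equation*}
(Mu)^T w'_x = f(0)\,e_0^T w'_x + \cO(\eta)\|f\|_{H^1_\mathrm{per}}\cdot\|w'_x\| = \cO(\|f\|_{H^1_\mathrm{per}}) + \cO(\eta)\|f\|_{H^1_\mathrm{per}}\cdot\cO(1/\eta),
\end{equation*}
which yields the claimed uniform bound. The main obstacle is the modified Step 1: one has to identify precisely which components of $w'_x$ blow up as $1/\eta$ and which remain bounded. The crucial point is that the $e_0$-direction of $w'_x$ stays $\cO(1)$ (thanks to $P_0'=0$ combined with the triangular structure of $\mathcal{P}$), which is exactly what is needed to cancel the loss of the $\eta$ prefactor that was present in Lemma \ref{lem:Tf_L2}, because $Mu$ concentrates on $e_0$ up to an $\cO(\eta)\|f\|_{H^1_\mathrm{per}}$ error.
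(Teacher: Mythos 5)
Your proof is correct and takes essentially the same route as the paper: the paper's own proof of Lemma \ref{lem:Tf_prime} is exactly a transposition of the proof of Lemma \ref{lem:Tf_L2} in which Step 1 is replaced by the expansion $C_1^{-1}\Phi'(x) = \frac{1}{\eta}\begin{pmatrix}0\\ *\end{pmatrix} + \cO(1)$, obtained from the Taylor expansion of $\Phi'$ at $\eta$, the identity $C_1^{-1}\eta^k\Phi^{(k)}(\eta)=\mathcal{P}e_k$ and $P_0'=0$, which is precisely what you do. Your explicit bookkeeping of how the $\cO(1)$ first component of $w'_x$ pairs with the $e_0$-concentration of $Mu$, while the $\cO(1/\eta)$ remaining components are compensated by the $\cO(\eta)\|f\|_{H^1_\mathrm{per}}$ error, merely spells out the final assembly that the paper leaves implicit.
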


\begin{proof}
It is a transposition of the proof of the previous lemma. The first step is simply replaced by 
\begin{enumerate}
\item $\left( \begin{array}{c}
C_1^{-1} \\ \hline  0 
\end{array} \right) \Phi'(x) - \frac{1}{\eta} P'(x/\eta) = \frac{1}{\eta} \begin{pmatrix} 0 \\ * \end{pmatrix} + \cO(1)$, where $\begin{pmatrix} 0 \\ * \end{pmatrix}$ is uniformly bounded in $\eta$  and $x$.
\end{enumerate}
To prove the latter statement, we observe that $P_0' = 0$ and by a Taylor expansion of $\Phi'$, we obtain 
\begin{equation*}
\Phi'(x) = \frac{1}{\eta} \sumlim{k=1}{N-1} \frac{1}{(k-1)!} \left(\frac{x}{\eta} - 1 \right)^{k-1} \eta^k \Phi^{(k)}(\eta) + \cO(\eta^{N-1}) ,
\end{equation*}
hence 
\begin{equation*}
C_1^{-1} \Phi'(x) = \frac{1}{\eta} \begin{pmatrix} 0 \\ * \end{pmatrix} + \cO(1).
\end{equation*}
\end{proof}

\begin{lem}
\label{lem:normeT}
There exists a positive constant $C$ independent of $\eta$ such that for all $f \in H^1_\mathrm{per}(0,1)$, we have
\begin{equation*}
\forall \, 0 < \eta \leq \eta_0, \ \|Tf\|_{H^1_\mathrm{per}} \leq C \eta^{\frac{1}{2}} \|f\|_{H^1_\mathrm{per}}
\end{equation*}
\end{lem}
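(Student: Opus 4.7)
The plan is to combine the pointwise estimates of Lemmas \ref{lem:Tf_L2} and \ref{lem:Tf_prime} with the fact that the support of $Tf$ is contained in a union of intervals of total measure $4\eta$. The lemma then follows from a straightforward $L^\infty$-to-$L^2$ conversion.

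First I would write $T = T_0 + T_a$ and, by the triangle inequality, reduce to bounding $T_0 f$ and $T_a f$ separately. By the construction of the PAW functions, $T_0 f$ vanishes outside $\bigcup_{k\in\Z}[-\eta+k,\eta+k]$ and $T_a f$ outside $\bigcup_{k\in\Z}[a-\eta+k,a+\eta+k]$, so each of these two functions is supported, modulo periodicity, on an interval of length $2\eta$.

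Next, applying Lemma \ref{lem:Tf_L2} to $f$, the pointwise bound
$$
|T_0 f(x)| = \left| \psh{\tilde p}{f} \cdot (\Phi(x) - \widetilde\Phi(x)) \right| \leq C \eta \|f\|_{H^1_\mathrm{per}}
$$
holds uniformly in $x \in [-\eta,\eta]$. Integrating the square of this bound over the support yields
$$
\|T_0 f\|_{L^2_\mathrm{per}}^2 \leq 2\eta \cdot C^2 \eta^2 \|f\|_{H^1_\mathrm{per}}^2 = 2C^2 \eta^3 \|f\|_{H^1_\mathrm{per}}^2.
$$
Similarly, using Lemma \ref{lem:Tf_prime}, the pointwise bound on $(T_0 f)' = \psh{\tilde p}{f}\cdot(\Phi'(x) - \widetilde\Phi'(x))$ on $[-\eta,\eta]$ is of order $\|f\|_{H^1_\mathrm{per}}$, so integrating gives $\|(T_0 f)'\|_{L^2_\mathrm{per}}^2 \leq 2C^2 \eta \|f\|_{H^1_\mathrm{per}}^2$. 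Adding both contributions,
$$
\|T_0 f\|_{H^1_\mathrm{per}}^2 \leq C'(\eta^3 + \eta)\|f\|_{H^1_\mathrm{per}}^2 \leq 2C' \eta \|f\|_{H^1_\mathrm{per}}^2
$$
for $\eta \leq \eta_0 \leq 1$, hence $\|T_0 f\|_{H^1_\mathrm{per}} \leq C'' \eta^{1/2} \|f\|_{H^1_\mathrm{per}}$. The argument for $T_a f$ is identical (the atomic site $a$ only shifts the support), so the triangle inequality gives the claim.

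There is no real obstacle here — all the heavy lifting was done in Lemmas \ref{lem:Tf_L2} and \ref{lem:Tf_prime}, where the delicate cancellations inside the projector inverse were handled. The only mild point to watch is that the $L^\infty$ bound on the derivative is $O(1)$ rather than $O(\eta^{-1})$ (which naively one might expect from differentiating a polynomial in $x/\eta$); this is exactly what Lemma \ref{lem:Tf_prime} provides and it is the reason why the overall factor is $\eta^{1/2}$ rather than $\eta^{-1/2}$.
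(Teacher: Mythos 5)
Your proposal is correct and follows exactly the route the paper intends: the paper's proof simply states that the lemma is a straightforward consequence of Lemmas \ref{lem:Tf_L2} and \ref{lem:Tf_prime}, and your argument (pointwise bounds of order $\eta$ for $T_0f$ and order $1$ for $(T_0f)'$, integrated over a support of measure $2\eta$ per site, then summed over the two sites) is precisely the computation being left to the reader, yielding the $\eta^{3/2}$ and $\eta^{1/2}$ contributions and hence the overall $\eta^{1/2}$ factor.
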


\begin{proof}
This is a straightforward consequence of Lemmas \ref{lem:Tf_L2} and \ref{lem:Tf_prime}. 
\end{proof}

\begin{proof}[Proof of Theorem \ref{theo:energies}]
Applying Proposition \ref{prop:eig_error} to $\psi_M = (\mathrm{Id}+T) \Pi_M \tilde{\psi}$, where $\Pi_M$ is the truncation to the first $M$ plane-waves, we have
\begin{align*}
|E_M^\eta - E| & \leq C \|(\mathrm{Id}+T)(\Pi_M \tilde{\psi} - \tilde{\psi}) \|_{H^1_\mathrm{per}}^2 \\
& \leq C ( \| \Pi_M \tilde{\psi} - \tilde{\psi} \|_{H^1_\mathrm{per}}^2 + \|T(\Pi_M \tilde{\psi} - \tilde{\psi} ) \|_{H^1_\mathrm{per}}^2 ).
\end{align*}
By Lemma \ref{lem:normeT}, 
\begin{align*}
\|T(\Pi_M \tilde{\psi} - \tilde{\psi}) \|_{H^1_\mathrm{per}}^2 \leq C \| \Pi_M \tilde{\psi} - \tilde{\psi} \|_{H^1_\mathrm{per}}^2 \ ,
\end{align*}
and we deduce from Theorem \ref{theo:FourierPsi} that
\begin{align*}
|E_M^\eta - E| &  \leq C \| \Pi_M \tilde{\psi} - \tilde{\psi} \|_{H^1_\mathrm{per}}^2  \\
& \leq C \sumlim{j=M+1}{\infty} (1+j^2) | \widehat{\tilde{\psi}}_j|^2  \\
& \leq C \sumlim{j=M+1}{\infty} \left( \frac{\eta^{4N}}{j^2} + \frac{1}{\eta^{2d-2}j^{2d}} \right)  \\
& \leq C \left( \frac{\eta^{4N}}{M} +  \frac{1}{\eta^{2d-2}M^{2d-1}} \right).
\end{align*}
\end{proof}

\section{Perturbation by a continuous potential}
\label{sec:smooth}

In standard electronic structure calculations, the Hartree and exchange-correlation terms are modelled by a potential that is smoother than the Coulomb potential. To reproduce this setting in our one-dimensional toy model, a smoother potential $W$ is added to the Hamiltonian \eqref{eq:H_mol}. In the following, we examine how the VPAW method accelerates the computation of eigenvalues. 

Consider the Hamiltonian 
\begin{equation}
H = -\frac{\mathrm{d}^2}{\mathrm{d}x^2} - Z_0 \sumlim{k \in \Z}{} \delta_k - Z_a \sumlim{k \in \Z}{} \delta_{a+k} + W \ ,
\label{eq:new_H}
\end{equation}
where $W$ is $1$-periodic, continuous, $0 < a < 1$, $Z_0, Z_a >0$. 

With the VPAW method, the generalized eigenvalue problem
\begin{equation}
\label{eq:newH_VPAW}
(\mathrm{Id} + T^*) H (\mathrm{Id}+T) \tilde{\psi} = E (\mathrm{Id} + T^*) (\mathrm{Id}+T) \tilde{\psi},
\end{equation}
is solved by expanding $\tilde{\psi}$ in plane-waves. Like in Section \ref{subsec:VPAW1D}, $T = T_0 + T_a$, where $T_0$ and $T_a$ act on two disjoint regions $\bigcup\limits_{k \in \Z}[-\eta+k,\eta+k]$ and $\bigcup\limits_{k \in \Z}[a-\eta+k,a+\eta+k]$ respectively. The atomic wave functions $(\phi_k)_{0 \leq k \leq N-1}$ are the non-smooth solutions of the atomic Hamiltonian 
\begin{equation*}
H_0 = -\frac{\mathrm{d}^2}{\mathrm{d}x^2} - Z_0 \sumlim{k \in \Z}{} \delta_k + V \ ,
\end{equation*}
where $V$ can be different from $W$. The eigenvalues associated to $(\phi_k)_{0 \leq k \leq N-1}$ are denoted by $\epsilon_k$. To define the pseudo wave functions $(\tilde{\phi}_k)_{0 \leq k \leq N-1}$ and the projectors $(\tilde{p}_k)_{0 \leq k \leq N-1}$, we proceed as in Section \ref{subsec:VPAW1D}.
\newline 

It follows from the study of the double Dirac delta potential Hamiltonian that the key lemma of the analysis is the structure Lemma \ref{lem:decomp}, which describes the behavior of eigenfunctions near the singularities. It is possible to establish a similar result for the eigenfunctions of the Hamiltonian \eqref{eq:new_H}. 

\begin{lem}
\label{lem:new_decomp}
Let $\psi$ be an eigenfunction of the Hamiltonian $H$ given by \eqref{eq:new_H} for the eigenvalue $E$. Then in a neighborhood of $0$, we have the following expansion :
\begin{multline*}
\psi(x) = \psi(0) \left( \sumlim{j=0}{k} \frac{(-E)^j}{(2j)!} x^{2j} - \frac{Z_0}{2} \frac{(-E)^j}{(2j+1)!} |x|^{2j+1} \right) \\
+ \frac{\psi'(0_+) + \psi'(0_-)}{2} \sumlim{j=0}{k} \frac{(-E)^j}{(2j+1)!} x^{2j+1} + \sumlim{j=0}{k} \underbrace{\idotsint}_{2j+2} W \psi (x) + \psi_{2k+2}(x),
\end{multline*}
where $\psi_{2k+2}$ is a $C^{2k+2}$ function satisfying in a neighbourhood of 0
\begin{equation*}
\begin{cases}
\psi_{2k+2}^{(2k+2)} = (-E)^{k+1} \psi ,\\
\psi_{2k+2}(x) = \cO(x^{2k+2}).
\end{cases}
\end{equation*} 
\end{lem}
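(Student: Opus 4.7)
The plan is to mimic the induction argument of Lemma \ref{lem:decomp}, the only new ingredient being that the eigenvalue equation now reads, in the distributional sense on $(-\tfrac12,\tfrac12)$,
\begin{equation*}
\psi''(x) = -E\,\psi(x) + W(x)\psi(x) - Z_0\,\psi(0)\,\delta_0.
\end{equation*}
Compared with the pure double-Dirac case, every time a second derivative is taken one picks up an additional $W\psi$ contribution, and these contributions will accumulate into the sum $\sum_{j=0}^{k}\idotsint_{2j+2} W\psi$ appearing in the statement.

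First I would treat the base case $k=0$. Set $\theta_2(x):=\psi(x)+\tfrac{Z_0}{2}|x|\psi(0)$. The identity $[\psi']_0=-Z_0\psi(0)$, which follows from the Dirac jump condition and is independent of the presence of the smooth perturbation $W$, shows that $\theta_2''=-E\psi+W\psi$ classically on a neighbourhood of $0$. Hence $\theta_2$ is $C^2$ near $0$, with $\theta_2(0)=\psi(0)$ and $\theta_2'(0)=\tfrac12(\psi'(0_+)+\psi'(0_-))$. A Taylor formula with integral remainder of the form $\theta_2(x)=\theta_2(0)+\theta_2'(0)x+\iint_0^x\theta_2''$ then yields
\begin{equation*}
\psi(x)=\psi(0)-\tfrac{Z_0}{2}|x|\psi(0)+\tfrac{\psi'(0_+)+\psi'(0_-)}{2}x+\iint_0^x W\psi+\psi_2(x),
\end{equation*}
with $\psi_2(x)=-E\iint_0^x\psi$, so that $\psi_2^{(2)}=-E\psi$ and $\psi_2(x)=\mathcal O(x^2)$.

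For the inductive step, assuming the expansion at order $k-1$, the remainder $\psi_{2k}$ satisfies $\psi_{2k}^{(2k)}=(-E)^k\psi$ in a neighbourhood of $0$. Differentiating twice more in the distributional sense and using the eigenvalue equation produces
\begin{equation*}
\psi_{2k}^{(2k+2)}=(-E)^{k+1}\psi+(-E)^k W\psi-(-E)^k Z_0\psi(0)\,\delta_0.
\end{equation*}
I would cancel the Dirac contribution exactly as in the proof of Lemma \ref{lem:decomp}, by setting
\begin{equation*}
\theta_{2k+2}(x):=\psi_{2k}(x)-(-E)^k\,\tfrac{|x|}{2}[\psi']_0\,\tfrac{x^{2k}}{(2k+1)!},
\end{equation*}
so that $\theta_{2k+2}$ is $C^{2k+2}$ near $0$ with $\theta_{2k+2}^{(2k+2)}=(-E)^{k+1}\psi+(-E)^k W\psi$. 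A Taylor expansion of order $2k+1$ with integral remainder at $0$ then gives $\theta_{2k+2}$ as a polynomial in $x$ (whose coefficients are computed exactly as in Lemma \ref{lem:decomp}) plus the iterated integral of $\theta_{2k+2}^{(2k+2)}$. Splitting that integral into its $(-E)^{k+1}\psi$ part and its $(-E)^k W\psi$ part and absorbing the latter into the running sum $\sum_{j=0}^{k}\idotsint_{2j+2}W\psi$ closes the induction, with the new remainder defined as $\psi_{2k+2}(x):=(-E)^{k+1}\idotsint_{2k+2}\psi(x)$.

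The main technical point—really the only subtlety—is the bookkeeping of the iterated primitives of $W\psi$: one must verify that the piece added at step $k$ indeed matches the missing $(2k+2)$-fold integral in the cumulative sum, which boils down to the elementary identity $\iint_0^x\!\idotsint_{2j+2}f=\idotsint_{2j+4}f$. Continuity of $W$ is more than enough to ensure that each $\idotsint_{2j+2}W\psi$ is a $C^{2j+2}$ function that vanishes at $0$ together with its first $2j+1$ derivatives, so the regularity statement on $\psi_{2k+2}$ follows at once. Since $W$ plays no role in either the Dirac jump condition or the odd-part structure of the eigenfunction, no new estimates on $Z_0$, $Z_a$ or $E$ are required.
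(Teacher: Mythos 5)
Your proposal follows essentially the same route as the paper's own (very brief) proof: the paper sets $\theta_2(x)=\psi(x)+\tfrac{Z_0}{2}|x|\psi(0)-\int_0^x\!\int_0^t W\psi$ so that $\theta_2''=-E\psi$ and then says "proceed as in Lemma \ref{lem:decomp}", whereas you keep $\theta_2''=-E\psi+W\psi$ and split the $W\psi$ contribution out of the Taylor remainder afterwards; the two bookkeepings are equivalent, and your treatment of the Dirac term via $[\psi']_0=-Z_0\psi(0)$ is exactly the paper's.

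One point should be made precise, however. At step $k$ the piece you split off from the integral remainder is $(-E)^k\underbrace{\idotsint}_{2k+2}W\psi$, not $\underbrace{\idotsint}_{2k+2}W\psi$, so the sum your induction actually produces is $\sum_{j=0}^{k}(-E)^j\underbrace{\idotsint}_{2j+2}W\psi$; your remark that the new piece "matches the missing $(2k+2)$-fold integral in the cumulative sum" is only true up to this factor. Since the two conditions $\psi_{2k+2}^{(2k+2)}=(-E)^{k+1}\psi$ and $\psi_{2k+2}(x)=\cO(x^{2k+2})$ pin the remainder down to $\psi_{2k+2}=(-E)^{k+1}\underbrace{\idotsint}_{2k+2}\psi$, the weighted sum is the exact identity one obtains, and the unweighted sum in the printed statement should be read with these $(-E)^j$ factors (the same weights would appear in the paper's own sketched induction; they are harmless for the later use of the lemma, where only the order of vanishing and the even/odd structure of the $W\psi$ primitives matter). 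Apart from this bookkeeping factor, your argument is sound and matches the paper's.
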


\begin{proof}
This lemma can be proved by induction. For $k=0$, we set
\begin{equation*}
\theta_2 (x) = \psi(x) + \frac{Z_0}{2}|x| \psi(0) - \int\limits_0^x \int\limits_0^t W(s) \psi(s) \mathrm{d}s \mathrm{d}t
\end{equation*}
and then proceed as in the proof of Lemma \ref{lem:decomp}.
\end{proof}

We will now make some assumptions on the potentials $V$ and $W$:
\begin{enumerate}
\item $V$ and $W$ are smooth and $1$-periodic;
\item $V$ is even. This property would indeed be satisfied by potentials that does not break the crystal symmetry;
\end{enumerate}

\begin{lem}
\label{lem:struct_V}
For $N \geq 2$:
\begin{equation*}
\iint V \Phi (x)  = \sumlim{k=1}{N-1} (V\Phi)_k \left( \frac{x^{2k}}{(2k)!} - \frac{Z_0}{2} \frac{|x|^{2k+1}}{(2k+1)!} \right) + \cO \left(x^{2N}\right),
\end{equation*}
where $(V \Phi)_k$ is in $\mathrm{span}(\cE^j \Phi(0), \ j \leq N-2)$.
\end{lem}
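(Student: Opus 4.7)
The plan is to apply Lemma~\ref{lem:new_decomp} componentwise to each atomic wave function $\phi_i$, then expand the product $V\Phi$ and integrate twice, tracking the order of each contribution.

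First, since each $\phi_i$ is an even eigenfunction of $H_0$ with eigenvalue $\epsilon_i$, Lemma~\ref{lem:new_decomp} (applied with $W$ replaced by $V$ and $k = N-1$) gives in vector form
\begin{equation*}
\Phi(x) = \sum_{j=0}^{N-1}\frac{\cE^j\Phi(0)}{(2j)!}\,x^{2j} - \frac{Z_0}{2}\sum_{j=0}^{N-1}\frac{\cE^j\Phi(0)}{(2j+1)!}\,|x|^{2j+1} + \sum_{j=0}^{N-1}\underbrace{\idotsint}_{2j+2} V\Phi(x) + \cO(x^{2N}),
\end{equation*}
the average-derivative contribution from the statement of the lemma vanishing because each $\phi_i$ is even.

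Second, I would multiply by $V$. Since $V$ is smooth and \emph{even}, its Taylor series contains only even powers, $V(x) = \sum_{m\geq 0} V^{(2m)}(0)\,x^{2m}/(2m)!$. Substituting, the product $V(x)\Phi(x)$ admits a mixed expansion in $x^{2p}$ and $|x|^{2p+1}$ terms (no $x^{2p+1}$ terms appear, since $V$-even times $|x|^{2j+1}$ still involves $|x|^{\mathrm{odd}}$), with vector coefficients of the form $c\,\cE^j\Phi(0)$, where $c$ depends polynomially on the $V^{(2m)}(0)$'s and the index $p$ is always $\geq j$.

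Third, I would apply $\underbrace{\idotsint}_{2}$, using
\begin{equation*}
\underbrace{\idotsint}_{2}\frac{x^{2p}}{(2p)!} = \frac{x^{2p+2}}{(2p+2)!},\qquad \underbrace{\idotsint}_{2}\frac{|x|^{2p+1}}{(2p+1)!} = \frac{|x|^{2p+3}}{(2p+3)!},
\end{equation*}
which raise the order of each monomial by $2$ while preserving its type. Thus a summand involving $\cE^j\Phi(0)$ first enters $\iint V\Phi$ at order $x^{2j+2}$. For it to lie below the $\cO(x^{2N})$ cutoff we need $2j+2 \leq 2N-2$, i.e.\ $j \leq N-2$, which is exactly the bound in the span appearing in the statement. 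The nested iterated integrals $\iint V \cdot \underbrace{\idotsint}_{2\ell+2} V\Phi$ are themselves of order $x^{2\ell+4}$ or higher, and substituting back the expansion of $\Phi$ recursively (a finite number of times, since each substitution strictly raises the order by $2$) absorbs them either into the explicit cusp sum or into $\cO(x^{2N})$.

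The main obstacle is to verify that the $x^{2k}$-type and $|x|^{2k+1}$-type contributions do combine into a common vector $(V\Phi)_k$ multiplying the cusp monomial $u_k(x) = x^{2k}/(2k)! - (Z_0/2)\,|x|^{2k+1}/(2k+1)!$. This matching is inherited from the cusp structure already present in $\Phi$ via Lemma~\ref{lem:new_decomp}: the $-Z_0/2$ ratio between $x^{2j}$ and $|x|^{2j+1}$ coefficients is preserved by multiplication by a smooth even function and by double integration, up to lower-order cusp corrections that are themselves captured at the next order by the recursive substitution from Step~3. Careful bookkeeping of these corrections, using the evenness of $V$ to eliminate any $x^{2p+1}$ contributions that would break the cusp form, yields the stated decomposition with $(V\Phi)_k \in \mathrm{span}(\cE^j\Phi(0),\ j\leq N-2)$.
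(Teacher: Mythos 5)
Your overall route is the one the paper takes: apply Lemma \ref{lem:new_decomp} with $W=V$, $Z_a=0$ to the even functions $\phi_i$ (so the odd analytic part drops), Taylor-expand the even potential $V$, multiply, integrate twice, and handle the nested iterated integrals $\underbrace{\idotsint}_{2j+2}V\Phi$ recursively (the paper organizes this as an induction); your order count $2j+2\le 2N-2$, hence $j\le N-2$, is also the paper's.

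The genuine gap is precisely at the step you call the main obstacle, and your proposed resolution is false as stated: multiplication by a smooth even function does \emph{not} preserve the $-Z_0/2$ cusp ratio. Indeed, for $m\ge 1$,
\begin{equation*}
\frac{x^{2m}}{(2m)!}\left(\frac{x^{2k}}{(2k)!}-\frac{Z_0}{2}\frac{|x|^{2k+1}}{(2k+1)!}\right)
=\binom{2m+2k}{2m}\frac{x^{2m+2k}}{(2m+2k)!}-\frac{Z_0}{2}\binom{2m+2k+1}{2m}\frac{|x|^{2m+2k+1}}{(2m+2k+1)!},
\end{equation*}
and the two binomial coefficients differ; since double integration maps $x^{2p}/(2p)!$ to $x^{2p+2}/(2p+2)!$ and $|x|^{2p+1}/(2p+1)!$ to $|x|^{2p+3}/(2p+3)!$, each term $V^{(2m)}(0)$, $m\ge1$, contributes a cusp monomial \emph{plus} a smooth even monomial of the \emph{same} order, not a ``lower-order cusp correction'' absorbable at the next recursive step. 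Concretely, in the scalar case with $V(x)=v_0+\tfrac{v_2}{2}x^2+\cO(x^4)$ and $\mu=v_0-\epsilon$, one computes $\iint V\phi = v_0\left(\tfrac{x^2}{2}-\tfrac{Z_0}{12}|x|^3\right)+\tfrac{v_0\mu+v_2}{24}\,x^4-\tfrac{Z_0(v_0\mu+3v_2)}{240}\,|x|^5+\cO(x^6)$: matching the $x^4$ term forces $(V\phi)_2=v_0\mu+v_2$ while matching the $|x|^5$ term forces $(V\phi)_2=v_0\mu+3v_2$, so no bookkeeping can produce the stated form with an $\cO(x^{2N})$ remainder once $N\ge3$ and $V''(0)\ne0$. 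What your argument (and, in truth, the paper's, whose final ``expanding the equation\dots we obtain the result'' glosses over the same point) actually yields is the expansion up to a remainder that is an even polynomial plus $\cO(x^{2N})$, i.e.\ a $C^{2N}$ function, with the coefficients of the singular parts $|x|^{2k+1}$ lying in $\mathrm{span}(\cE^j\Phi(0),\ j\le N-2)$; this weaker form is all that Lemma \ref{lem:struct_Phi} uses, since its remainder is only required to be $C^{2k+2}$. You should therefore replace the claim of ratio preservation by an explicit accounting of these leftover smooth even monomials and state the conclusion modulo a smooth remainder.
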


\begin{proof}
This lemma is proved by induction.
\paragraph{Initialization}
Applying Lemma \ref{lem:new_decomp} with $Z_a = 0$ and $W=V$ to each function $\phi_k$, we obtain expansions of the atomic PAW functions $\phi_k$ in the vicinity of $0$:
$$
\Phi(x) =  \Phi(0) \left(1 - \frac{Z_0}{2} |x|\right) + \cO(x^2).
$$
Deriving twice $\iint V \Phi$ gives
\begin{align*}
\left( \iint V \Phi (x) \right)'' & = V(x) \Phi(x) \\
& = V(0) \Phi(0) \left(1 - \frac{Z_0}{2} |x|\right) + \cO(x^2) .
\end{align*}
Therefore
\begin{equation*}
\iint V \Phi (x) =  V(0) \Phi(0)\left( \frac{x^{2}}{2} - \frac{Z_0}{2} \frac{|x|^{3}}{6} \right) + \cO(x^{4}).
\end{equation*}

\paragraph{Inductive step}
Let us derive twice $\iint V \Phi$:
\begin{align}
\left( \iint V \Phi (x) \right)'' & = V(x) \Phi(x) \nonumber \\
& = \left( \sumlim{k=0}{2N-2} V^{(2k)}(0) \frac{x^{2k}}{(2k)!} \right) \Bigg( \sumlim{k=0}{N-1} \left( \frac{x^{2k}}{(2k)!} - \frac{Z_0}{2} \frac{|x|^{2k+1}}{(2k+1)!} \right) D^k \Phi(0)  \nonumber \\ 
 &  + \sumlim{j=0}{N-2} \underbrace{\idotsint}_{(2j+2)} V \Phi (x) \Bigg) + \cO(x^{2N}).
\label{eq:VPHI}
\end{align}
By the induction hypothesis,
\begin{equation*}
\underbrace{\idotsint}_{(2j+2)} V \Phi (x) = \sumlim{k=j+1}{N-1} (V \Phi)_{k-j} \left( \frac{x^{2k}}{(2k)!}  - \frac{Z_0}{2} \frac{|x|^{2k+1}}{(2k+1)!} \right) + \cO(x^{2N}).
\end{equation*}
Thus,
\begin{align*}
\sumlim{j=0}{N-1} \underbrace{\idotsint}_{(2j+2)} V \Phi (x) & = \sumlim{j=0}{N-2} \sumlim{k=j+1}{N-1} (V \Phi)_{k-j} \left( \frac{x^{2k}}{(2k)!} - \frac{Z_0}{2} \frac{|x|^{2k+1}}{(2k+1)!} \right) \\
& = \sumlim{k=1}{N-1} \sumlim{j=0}{k-1} (V \Phi)_{k-j} \left( \frac{x^{2k}}{(2k)!} - \frac{Z_0}{2} \frac{|x|^{2k+1}}{(2k+1)!} \right).
\end{align*}
Going back to \eqref{eq:VPHI}, expanding the equation and using the last equation, we obtain the result. 
\end{proof}

\begin{lem}
\label{lem:struct_Phi}
In a neighbourhood of $0$, the vector $\Phi$ has the following expansion :
\begin{equation*}
\Phi(x) = \sumlim{j=0}{k} \left( \frac{x^{2j}}{(2j)!}  - \frac{Z_0}{2} \frac{|x|^{2j+1}}{(2j+1)!} \right) X_j + \Phi_{2k+2}(x) \ ,
\end{equation*}
where the function $\Phi_{2k+2}$ is $C^{2k+2}$ at 0 and $X_j$ are vectors satisfying
\begin{equation}
\begin{cases}
X_j \in \mathrm{span}(\cE^\ell \Phi(0) , \ \ell \leq j), \\
X_j - \cE^j \Phi(0) \in \mathrm{span}(\cE^\ell \Phi(0) , \ \ell \leq j-1),
\end{cases}
\label{eq:X_j}
\end{equation}
where $\cE$ is the diagonal matrix with entries $-\epsilon_0, \dots, -\epsilon_{N-1}$. 
\end{lem}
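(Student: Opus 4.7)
The plan is to apply Lemma \ref{lem:new_decomp} componentwise to $\Phi = (\phi_1,\dots,\phi_N)^T$ and then collapse the iterated integrals of $V\Phi$ into the $X_j$ by iterating Lemma \ref{lem:struct_V}. I first observe that, because $V$ is even, $H_0$ commutes with the parity $x\mapsto -x$; the non-smooth eigenfunctions $\phi_\ell$ used to build $T_0$ are therefore even, so $\phi_\ell'(0_+)+\phi_\ell'(0_-)=0$ for every $\ell$. Applying Lemma \ref{lem:new_decomp} to each component of $\Phi$ with $E=\epsilon_\ell$ and $W=V$, the odd-polynomial middle term drops out, and stacking into a vector yields
\begin{equation*}
\Phi(x) = \sumlim{j=0}{k}\left(\frac{x^{2j}}{(2j)!} - \frac{Z_0}{2}\frac{|x|^{2j+1}}{(2j+1)!}\right)\cE^j\Phi(0) + \sumlim{j=0}{k}\underbrace{\idotsint}_{2j+2} V\Phi(x) + \widetilde{\Phi}_{2k+2}(x),
\end{equation*}
where $\widetilde{\Phi}_{2k+2}$ is $C^{2k+2}$ at $0$.

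Next I expand the iterated integrals. Integrating the identity of Lemma \ref{lem:struct_V} $2j$ additional times (using $\iint(u^{2r}/(2r)!)=x^{2r+2}/(2r+2)!$ and $\iint(|u|^{2r+1}/(2r+1)!)=|x|^{2r+3}/(2r+3)!$) gives, for each $j$,
\begin{equation*}
\underbrace{\idotsint}_{2j+2}V\Phi(x) = \sumlim{m=j+1}{k}(V\Phi)_{m-j}\left(\frac{x^{2m}}{(2m)!} - \frac{Z_0}{2}\frac{|x|^{2m+1}}{(2m+1)!}\right) + \cO(x^{2k+2}).
\end{equation*}
A close reading of the induction in the proof of Lemma \ref{lem:struct_V} gives the sharper conclusion $(V\Phi)_r\in\mathrm{span}(\cE^\ell\Phi(0),\ \ell\leq r-1)$: at step $r$ the new coefficient is built only from products $V^{(2s)}(0)\cE^{r-s-1}\Phi(0)$ and $V^{(2s)}(0)(V\Phi)_{r-s-1}$ with $0\leq s\leq r-1$, and the inductive hypothesis applies to the second kind.

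Substituting back and exchanging the two sums over $j$ and $m$ then produces the announced expansion with
\begin{equation*}
X_j = \cE^j\Phi(0) + \sumlim{\ell=0}{j-1}(V\Phi)_{j-\ell},
\end{equation*}
and a remainder $\Phi_{2k+2}$ combining $\widetilde{\Phi}_{2k+2}$ with the $\cO(x^{2k+2})$ tails, which is $C^{2k+2}$. The conditions \eqref{eq:X_j} then follow immediately: each $(V\Phi)_{j-\ell}$ lies in $\mathrm{span}(\cE^r\Phi(0),\ r\leq j-\ell-1)\subseteq\mathrm{span}(\cE^r\Phi(0),\ r\leq j-1)$, so that $X_j-\cE^j\Phi(0)$ sits in this subspace while $X_j$ itself lies in $\mathrm{span}(\cE^r\Phi(0),\ r\leq j)$. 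The only real obstacle I foresee is of bookkeeping nature, namely extracting the refined span statement on $(V\Phi)_r$ from the proof of Lemma \ref{lem:struct_V} (which, as stated, only records the weaker bound $\ell\leq N-2$), and carefully tracking the double-sum reindexing so that the leading-order vectors $\cE^j\Phi(0)$ are untouched while every correction term lands in the strictly smaller span indexed by $\ell\leq j-1$.
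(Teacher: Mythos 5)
Your proof is correct and follows essentially the same route as the paper's (one-sentence) argument: apply Lemma \ref{lem:new_decomp} componentwise with $W=V$ (the odd term vanishing since the selected $\phi_\ell$ are even), expand the iterated integrals of $V\Phi$ via Lemma \ref{lem:struct_V}, and collect the coefficients into $X_j=\cE^j\Phi(0)+\sum_{\ell=0}^{j-1}(V\Phi)_{j-\ell}$. The sharpened span property $(V\Phi)_r\in\mathrm{span}(\cE^\ell\Phi(0),\ \ell\leq r-1)$ that you extract from the induction is exactly the fact the paper's proof invokes when it says to ``notice that the vectors $(V\Phi)_k$ are spanned by $(\cE^j\Phi(0),\ j\leq k-1)$'', so your write-up simply makes that step explicit.
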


\begin{proof}
We apply Lemmas \ref{lem:new_decomp} and \ref{lem:struct_V} and notice that the vectors $(V \Phi)_k$ are spanned by $(\cE^j \Phi(0) , \ j \leq k-1)$.
\end{proof}


\begin{lem}
The even part of $\psi$ satisfies
\begin{multline*}
\psi_e(x) = \sumlim{j=0}{1} \left( \frac{(-E)^j}{(2j)!} x^{2j} - \frac{Z_0}{2} \frac{(-E)^j}{(2j+1)!} |x|^{2j+1} \right) + W(0) \psi(0) \left( \frac{x^2}{2} - \frac{Z_0}{2} \frac{|x|^3}{3!} \right) \\
+ \left(W'(0) \psi'_s(0) - \frac{E}{2} W(0) \psi(0)\right) \frac{x^4}{4!} - \frac{Z_0}{2} \left(\frac{W''(0)}{2} \psi(0) - E  W(0) \psi(0) \right) \frac{|x|^5}{5!} \\
+ W(0) \psi(0) \left( \frac{x^4}{4!} - \frac{Z_0}{2} \frac{x^5}{5!} \right) + \cO\left(x^6\right),
\end{multline*}
where 
\begin{equation*}
\psi'_s(0) = \frac{\psi'(0_+) + \psi'(0_-)}{2}.
\end{equation*}
\end{lem}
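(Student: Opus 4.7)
The plan is to apply Lemma~\ref{lem:new_decomp} to $\psi$ with $k=2$, which gives in a neighborhood of $0$
\begin{equation*}
\psi(x) = P_e(x) + P_o(x) - \frac{Z_0}{2}\psi(0)\sum_{j=0}^{2}\frac{(-E)^j}{(2j+1)!}|x|^{2j+1} + \sum_{j=0}^{2}\underbrace{\idotsint}_{2j+2}W\psi(x) + \psi_6(x),
\end{equation*}
where $P_e$ is a smooth even polynomial of degree $\le 4$ proportional to $\psi(0)$, $P_o$ is a smooth odd polynomial of degree $\le 5$ proportional to $\psi'_s(0)$, and $\psi_6(x)=\mathcal{O}(x^6)$. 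Forming $\psi_e(x)=\tfrac{1}{2}(\psi(x)+\psi(-x))$ kills $P_o$, keeps $P_e$ and the cusp sum unchanged (both already even), symmetrizes the iterated integrals, and leaves $\underbrace{\idotsint}_{6}W\psi$ as a piece of the $\mathcal{O}(x^6)$ remainder since it is trivially of order $x^6$. The whole task thus reduces to expanding the even parts of $\iint W\psi(x)$ and $\iiiint W\psi(x)$ up to order~$5$.

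To do so I would use the explicit iterated-primitive formula $\underbrace{\idotsint}_{2k+2}f(x)=\int_0^{x}\tfrac{(x-s)^{2k+1}}{(2k+1)!}f(s)\,ds$, together with the Taylor-with-cusp expansion (read off from Lemma~\ref{lem:new_decomp} applied to $\psi$ itself)
\begin{equation*}
\psi(s) = \psi(0) + \psi'_s(0)\,s - \tfrac{Z_0}{2}\psi(0)\,|s| - \tfrac{E}{2}\psi(0)\,s^2 + \mathcal{O}\bigl(s^3 + s^2|s|\bigr),
\end{equation*}
and the smooth expansion $W(s)=W(0)+W'(0)\,s+\tfrac{W''(0)}{2}\,s^2+\mathcal{O}(s^3)$. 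The product $W\psi$ then decomposes into a polynomial part in $s$ and a $|s|\times(\text{polynomial in }s)$ part. One checks by direct computation that integrating a monomial $s^n$ gives $\iint s^n(x) = \tfrac{x^{n+2}}{(n+1)(n+2)}$, while integrating $|s|\cdot s^n$ for $n$ even gives a contribution $\tfrac{|x|^{n+3}}{(n+1)(n+2)(n+3)}$ in the even part, with analogous formulas at the $\iiiint$ level. Assembling all products of Taylor coefficients whose total degree is $\le 5$ then produces the announced right-hand side.

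The main obstacle is the bookkeeping of cross terms: several distinct Taylor pairings enter each coefficient. For instance, the coefficient of $x^4/4!$ collects $W'(0)\psi'_s(0)$ from the bilinear term $W'(0)s\cdot\psi'_s(0)s$ in $\iint W\psi$, $-\tfrac{E}{2}W(0)\psi(0)$ from $\iint$ hitting the quadratic cusp-free correction $-\tfrac{E}{2}\psi(0)s^2$, and $W(0)\psi(0)$ from the leading term of $\iiiint W\psi$; similarly the coefficient of $|x|^5/5!$ combines a $-\tfrac{Z_0}{2}\tfrac{W''(0)}{2}\psi(0)$ contribution from $\iint$ acting on $\tfrac{W''(0)}{2}s^2\cdot(-\tfrac{Z_0}{2}\psi(0)|s|)$ with the $\iiiint$ contributions of $W(0)\psi(0)|s|$ and of the order-$s^2$ correction of $W\psi$. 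Once these are tabulated and summed, the residual pieces from $\psi_6$, from the Taylor tails of $W\psi$, and from $\underbrace{\idotsint}_{6}W\psi$ all fit into the $\mathcal{O}(x^6)$ error, completing the proof.
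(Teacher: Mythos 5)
Your route is the same as the paper's own proof, which consists precisely of the sentence ``apply Lemma~\ref{lem:new_decomp} and carefully estimate $\iint W\psi$ and $\iiiint W\psi$''; so the issue is not the strategy but the execution, which here \emph{is} the whole proof, and as written it would not produce the stated expansion. Concretely: (i) your input expansion of $\psi$ is wrong already at order $s^2$: away from $0$ one has $\psi''=(W-E)\psi$, so the $s^2$-coefficient is $\tfrac{(W(0)-E)\psi(0)}{2}$, not $-\tfrac{E}{2}\psi(0)$; equivalently, when you ``read off'' the expansion from Lemma~\ref{lem:new_decomp} you discarded the $\mathcal{O}(s^2)$ part $\tfrac{W(0)\psi(0)}{2}s^2$ of $\iint W\psi$, which is of the same order as the term you kept. (ii) The integration weights are mishandled: $\iint s^2=\tfrac{x^4}{12}=2\cdot\tfrac{x^4}{4!}$, so the pairing $W'(0)s\cdot\psi'_s(0)s$ contributes $2\,W'(0)\psi'_s(0)$ to the coefficient of $x^4/4!$, not $W'(0)\psi'_s(0)$, and similarly for the other $\iint$-terms; moreover your rule $\iint(|s|\,s^{n})\sim\tfrac{|x|^{n+3}}{(n+1)(n+2)(n+3)}$ carries a spurious factor $(n+1)$ and contradicts your own (correct) monomial rule on $s>0$. (iii) You drop the even term $\tfrac{W''(0)}{2}\psi(0)\,s^2$ of $W\psi$, which contributes $W''(0)\psi(0)\,\tfrac{x^4}{4!}$. (iv) To get the $|x|^5$ coefficient of $\iint W\psi$ you need $\psi$, hence $W\psi$, to order three including its $|s|^3$ part, which your $\mathcal{O}(s^3+s^2|s|)$ truncation does not supply. (v) When one actually iterates $\psi''=(W-E)\psi$, the $(2j+2)$-fold primitives of $W\psi$ appear with weights $(-E)^j$ (the printed Lemma~\ref{lem:new_decomp} omits them); from order $x^4$ on this matters, e.g.\ it shifts the $x^4/4!$ coefficient by $-E\,W(0)\psi(0)$.

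A reliable cross-check, which you should run before trusting any tabulation, is to Taylor-expand $\psi$ separately on each side of $0$ using $\psi''=(W-E)\psi$ and the jump $[\psi']_0=-Z_0\psi(0)$, then symmetrize: this gives for instance that the coefficient of $x^4/4!$ in $\psi_e$ equals $(W(0)-E)^2\psi(0)+2W'(0)\psi'_s(0)+W''(0)\psi(0)$, as it must (for constant $W$ the result has to reduce to \eqref{eq:psi_e} with $E$ replaced by $E-W(0)$). The contributions you list sum neither to this value nor, once the correct weights are inserted, to the displayed right-hand side (which, as printed, seems to carry misprints of its own, e.g.\ the missing $\psi(0)$ prefactor in the first sum). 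So the plan is the right one, but the ``careful estimation'' has to be redone from scratch: correct Taylor data for $\psi$ and $W\psi$ up to order three including the cusp terms, correct iterated primitives, and the $(-E)^j$ weights.
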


\begin{proof}
The proof follows from Lemma \ref{lem:new_decomp} and a careful estimation of the terms $\iint W \psi$ and $\iiiint W \psi$.
\end{proof}

Since $W$ is not even, $\psi$ does not have the same structure as for the double delta potential. More precisely, we can show that because of the term $\iint W \psi$ the singularity of the fifth order term cannot be removed by the VPAW approach. 

\begin{lem}
For $N=2$ there exist coefficients $c_0$ and $c_1$ such that:
\begin{equation*}
\psi_e(x) - c_0 \phi_0(x) - c_1 \phi_1(x) = \cO\left(x^4\right).
\end{equation*}

For $N \geq 3$, there exists a family of coefficients $(c_k)_{0 \leq k \leq N-1}$ such that:
\begin{equation*}
\psi_e(x) - \sumlim{k=0}{N-1} c_k \phi_k(x) = \cO\left(x^5\right).
\end{equation*}
\end{lem}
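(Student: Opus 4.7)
The plan is to match the near-zero Taylor-cusp expansions of $\psi_e$ and of $c^T\Phi = \sum_{k=0}^{N-1} c_k\phi_k$ by choosing $c$ to kill successive powers of $x$ in the difference. By Lemma \ref{lem:struct_Phi},
\begin{equation*}
c^T\Phi(x) \;=\; \sum_{j=0}^{2}(c^T X_j)\left(\frac{x^{2j}}{(2j)!} - \frac{Z_0}{2}\frac{|x|^{2j+1}}{(2j+1)!}\right) + \cO(x^6),
\end{equation*}
so in $c^T\Phi$ the normalized coefficients of $|x|^{2j+1}/(2j+1)!$ and $x^{2j}/(2j)!$ are rigidly linked by the ratio $-Z_0/2$, independent of $j$. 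Reading off the expansion of $\psi_e$ from the preceding lemma, one observes that this same cusp ratio holds for $\psi_e$ at $j=0$ and $j=1$ (both the $\psi(0)$-cusp contribution and the $W(0)\psi(0)\bigl(\tfrac{x^2}{2}-\tfrac{Z_0}{2}\tfrac{|x|^3}{3!}\bigr)$ contribution individually respect it), but \emph{fails} at $j=2$, the obstruction being the term $W'(0)\psi'_s(0)\,x^4/4!$ in $\psi_e$, which has no companion at order $|x|^5$.

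For $N=2$ we choose $c\in\R^2$ satisfying the two linear conditions
\begin{equation*}
c^T X_0 = \psi(0), \qquad c^T X_1 = \psi(0)\bigl(W(0)-E\bigr).
\end{equation*}
By the cusp-ratio consistency just observed, killing the $x^0$ and $x^2$ coefficients of $\psi_e - c^T\Phi$ automatically kills the $|x|$ and $|x|^3$ coefficients, hence $\psi_e - c_0\phi_0 - c_1\phi_1 = \cO(x^4)$. The system is solvable since, by Lemma \ref{lem:struct_Phi}, $\mathrm{span}(X_0,X_1)=\mathrm{span}(\Phi(0),\cE\Phi(0))$, and the latter is $2$-dimensional in $\R^N$ by the Vandermonde argument already used in the proof of Lemma \ref{lem:approx} (the $(\epsilon_k)$ are pairwise distinct and $\phi_k(0)\neq 0$).

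For $N\geq 3$ we add the further condition $c^T X_2 = a_2$, where $a_2$ denotes the $x^4/4!$ coefficient of $\psi_e$. The resulting $3\times N$ linear system is solvable by the same Vandermonde argument, since $\mathrm{span}(X_0,X_1,X_2) = \mathrm{span}(\Phi(0),\cE\Phi(0),\cE^2\Phi(0))$ is $3$-dimensional as soon as $N\geq 3$. Matching $c^T X_2$ removes the $x^4$ term from the difference; however, because the cusp ratio fails at $j=2$, the $|x|^5$ terms do not simultaneously cancel, and one concludes $\psi_e - \sum_{k=0}^{N-1} c_k\phi_k = \cO(x^5)$ and in general not smaller.

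The main content of the proof is therefore the first observation: verifying by a direct expansion bookkeeping on the preceding lemma that the cusp ratio $-Z_0/2$ is inherited by $\psi_e$ at orders $j=0,1$ but is broken at $j=2$ by the $W'(0)\psi'_s(0)$ contribution. The remaining ingredients (Vandermonde-based solvability of the linear system in $c$, and the residual order-counting) are then essentially immediate.
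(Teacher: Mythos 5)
Your proof is correct, and it is essentially the argument the paper intends: this lemma is stated in the paper without proof, and your coefficient matching through Lemma \ref{lem:struct_Phi} (killing the $x^{2j}$ coefficients and getting the $|x|^{2j+1}$ ones for free from the common cusp ratio $-Z_0/2$, which you correctly verify survives in $\psi_e$ at orders $j=0,1$ and breaks at $j=2$), together with the Vandermonde solvability of the conditions $c^TX_j=\alpha_j$, is precisely the adaptation to the perturbed setting of the paper's proof of Lemma \ref{lem:approx}. The only gloss is that you use the remainder $\Phi_{2k+2}$ of Lemma \ref{lem:struct_Phi} as $\cO\left(x^{2k+2}\right)$ rather than merely $C^{2k+2}$; this is the intended meaning (it follows from the construction via Lemmas \ref{lem:new_decomp} and \ref{lem:struct_V}, which for $N=2$ yields a remainder $\cO\left(x^{4}\right)$ — exactly what your matching of orders $0$ through $3$ needs, while for $N\geq 3$ it yields $\cO\left(x^{6}\right)$ as you use).
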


Following the same steps as in Section \ref{sec:proof}, we can establish the following theorems.

\begin{theo}[Estimates on the Fourier coefficients]
\label{theo:FourierPsi_pot}
Let $N \in \N^*$ and $d \geq N$. Let $\widehat{\tilde{\psi}}_m$ be the $m$-th Fourier coefficient of $\tilde{\psi}$. Then, there exists a positive constant $C$ such that for all $0 < \eta \leq \eta_0$ and $m \geq \tfrac{1}{\eta}$
\begin{equation*}
\left| \widehat{\tilde{\psi}}_m \right| \leq C \left( \frac{\eta^{2N \wedge 5}}{m^2} + \frac{1}{\eta^{d-1}m^{d+1}} \right),
\end{equation*}
where $a \wedge b = \min (a,b)$.
\end{theo}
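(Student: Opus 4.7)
The overall strategy is exactly the one used for Theorem~\ref{theo:FourierPsi}: expand $\widehat{\tilde{\psi}}_m$ via Proposition~\ref{prop:Fourier_decay} and bound each contribution separately. The Fourier formula involves the derivative jumps $[\tilde{\psi}^{(2j+1)}]_0$, $[\tilde{\psi}^{(2j+1)}]_a$, $[\tilde{\psi}^{(k)}]_{\pm\eta}$, $[\tilde{\psi}^{(k)}]_{a\pm\eta}$ and the remainder $\int_0^1 \tilde{\psi}^{(2d-1)}(x) e^{-2i\pi mx}\,\mathrm{d}x$. I would bound these four groups of terms in turn, tracking their dependence on $\eta$, and then assemble them using the hypothesis $m\geq 1/\eta$.

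First I would establish analogues of Lemmas~\ref{lem:der_jump} and \ref{lem:der_jump_eta}. For the jumps at $0$, the local identity $\tilde{\psi} = \psi - \sum_i \langle \tilde{p}_i,\tilde{\psi}\rangle (\phi_i - \tilde{\phi}_i)$ still holds and the pseudo wave functions remain polynomial on $[-\eta,\eta]$, so every jump reduces to combinations of $[\psi^{(2j+1)}]_0$ and $[\phi_k^{(2j+1)}]_0$. In the perturbed setting these jumps are no longer purely proportional to $\psi(0)(-E)^j$ and $\phi_k(0)(-\epsilon_k)^j$, but Lemma~\ref{lem:new_decomp} together with Lemma~\ref{lem:struct_V} show that the extra contributions coming from iterated integrals $\iint W\psi$ live in $\mathrm{span}(\mathcal{E}^\ell\Phi(0),\ \ell\leq j)$. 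The same Vandermonde argument as in Lemma~\ref{lem:der_jump} therefore yields a formula of the type $[\tilde{\psi}^{(2j+1)}]_0 = -Z_0\,\langle A^{-1}p,\psi_e - \sum c_k\phi_k\rangle\cdot X_j + (\text{structured residue})$, with $X_j\in\mathrm{span}(\mathcal{E}^\ell\Phi(0),\ \ell\leq j)$. Inserting the approximation $\psi_e - \sum c_k\phi_k = \mathcal{O}(x^{2N\wedge 5})$ from the last Lemma and applying the stability bound of Lemma~\ref{lem:stab} gives $|[\tilde{\psi}^{(2j+1)}]_0|\leq C\,\eta^{(2N\wedge 5)-2j}$ for small $j$ and $|[\tilde{\psi}^{(2j+1)}]_0|\leq C$ otherwise. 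For the jumps at $\pm\eta$, the proof of Proposition~\ref{lem:der_jump_eta} transfers verbatim: it relies only on the polynomial form of $\widetilde{\Phi}$ and on a Taylor expansion of $\Phi$ near $\eta$, and Lemma~\ref{lem:struct_Phi} provides exactly the triangular structure $X_j - \mathcal{E}^j\Phi(0)\in\mathrm{span}(\mathcal{E}^\ell\Phi(0),\ \ell\leq j-1)$ needed to apply Lemma~\ref{lem:stab} after a harmless change of basis, giving $|[\tilde{\psi}^{(k)}]_\eta|\leq C/\eta^{k-1}$ for $d\leq k\leq 2d-2$.

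For the remainder term, since $\widetilde{\Phi}$ is a polynomial of degree $\leq 2d-2$, the derivative $\widetilde{\Phi}^{(2d-1)}$ vanishes, and the only surviving contributions are $\int |\psi^{(2d-1)}|$ and $\int_{-\eta}^\eta |\langle\tilde{p},\tilde{\psi}\rangle\cdot\Phi^{(2d-1)}|$, plus its analogue near $a$. The identity $\langle\tilde{p},\tilde{\psi}\rangle = \langle A^{-1}p,\psi\rangle$ together with Lemmas~\ref{lem:ApPhi} and \ref{lem:stab} is unchanged by the addition of a smooth potential, so the same bound $\mathcal{O}(\eta^{3-2N})$ is obtained for the inner product, and the remainder contributes at most $C\eta^{3-2N}/m^{2d-1}$, which is absorbed by the second term of the claim whenever $m\geq 1/\eta$. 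Assembling all three bounds in Proposition~\ref{prop:Fourier_decay} and using $m\eta\geq 1$ to collapse the sums to their leading terms ($j=0$ and $k=d$) yields the announced estimate.

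The main obstacle is the extension of the structural Lemma~\ref{lem:der_jump}: the non-even smooth part $W$ introduces vectors in the Taylor expansions of $\psi$ and $\phi_k$ that are not proportional to $\mathcal{E}^j\Phi(0)$ in a simple way, which is why the Vandermonde interpolation only kills the first $2N\wedge 5$ orders rather than the full $2N$. Controlling precisely which iterated integrals $\iint W\psi, \iiiint W\psi$ can be absorbed by an appropriate choice of the coefficients $(c_k)$ and which cannot (producing the irreducible $\wedge 5$) is the delicate combinatorial step; everything else follows the template of Section~\ref{sec:proof} with bookkeeping adjustments for the perturbed expansions.
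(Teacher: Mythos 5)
Your proposal is correct and follows essentially the same route as the paper, which itself only sketches this proof by stating that one follows the template of Section~\ref{sec:proof} with Lemmas~\ref{lem:new_decomp}, \ref{lem:struct_V}, \ref{lem:struct_Phi} and the $\cO(x^{2N\wedge 5})$ approximation lemma replacing their unperturbed counterparts. Your accounting of the jump estimates at $0$ and $\pm\eta$, the remainder term, and the absorption of the $\eta^{3-2N}/m^{2d-1}$ contribution under $m\eta\geq 1$ matches exactly the intended adaptation.
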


\begin{theo}[Estimates on the eigenvalues]
\label{theo:energies_pot}
Let $N \in \N^*$ and $d \geq N$. Let $E_M^\eta$ be an eigenvalue of the variational approximation of \eqref{eq:newH_VPAW} in a basis of $M$ plane-waves and for a cut-off radius $0 < \eta \leq \eta_0$, and let $E$ be the corresponding exact eigenvalue. There exists a constant $C >0$ independent of $\eta$ and $M$ such that for all $0 < \eta \leq \eta_0$ and $M \geq \tfrac{1}{\eta}$
\begin{equation}
\label{eq:eig_pert}
|E_M^\eta - E| \leq C \left( \frac{\eta^{4N \wedge 10}}{M} + \frac{1}{\eta^{2d-2}} \frac{1}{M^{2d-1}} \right).
\end{equation}
\end{theo}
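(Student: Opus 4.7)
The plan is to follow the same two-step strategy as in the proof of Theorem \ref{theo:energies}: first apply the abstract variational eigenvalue estimate (Proposition \ref{prop:eig_error}) with the trial function $(\mathrm{Id}+T)\Pi_M\tilde\psi$, where $\Pi_M$ denotes the truncation to the first $M$ plane-waves; and second, use the Fourier decay of $\tilde\psi$ from Theorem \ref{theo:FourierPsi_pot} to control the resulting approximation error. Concretely, one writes
\[
0 < E_M^\eta - E \;\leq\; C\,\bigl\| (\mathrm{Id}+T)(\Pi_M\tilde\psi - \tilde\psi) \bigr\|_{H^1_\mathrm{per}}^2
\;\leq\; C\bigl(1 + \|T\|_{H^1_\mathrm{per}\to H^1_\mathrm{per}}^2\bigr)\,\|\Pi_M\tilde\psi - \tilde\psi\|_{H^1_\mathrm{per}}^2.
\]
Provided the operator norm of $T$ on $H^1_\mathrm{per}(0,1)$ stays bounded uniformly in $\eta$ (in fact it will be $\cO(\eta^{1/2})$, as in Lemma \ref{lem:normeT}), the problem reduces to bounding the tail $\sum_{m > M}(1+m^2)|\widehat{\tilde\psi}_m|^2$ using the Fourier estimate of Theorem \ref{theo:FourierPsi_pot}.

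The only non-routine point is therefore to re-establish the analogue of Lemma \ref{lem:normeT} in the perturbed setting. Inspection of the proofs of Lemmas \ref{lem:Tf_L2} and \ref{lem:Tf_prime} shows that the argument relies on two facts about the atomic PAW functions $\phi_k$: the algebraic structure of $\Phi$ near $0$ expressed in Lemma \ref{lem:decomp}, and the invertibility of the block $C_1$ together with the identity $C_1^{-1}\eta^k\Phi^{(k)}(\eta)=\mathcal{P}e_k$ from Lemma \ref{lem:C_eta}. In the present setting these are provided by Lemma \ref{lem:struct_Phi}: the expansion $\Phi(x)=\sum_{j=0}^{N-1}\bigl(\tfrac{x^{2j}}{(2j)!} - \tfrac{Z_0}{2}\tfrac{|x|^{2j+1}}{(2j+1)!}\bigr)X_j + \Phi_{2N}(x)$ has precisely the same form, and the family $(X_j)_{0\le j\le N-1}$ spans the same $\eta$-independent subspace as $(\cE^j\Phi(0))_{0\le j\le N-1}$ (since $X_j - \cE^j\Phi(0)$ lies in the span of the previous vectors). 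Consequently Lemma \ref{lem:stab} and Lemma \ref{lem:C_eta} go through verbatim after replacing $\cE^j\Phi(0)$ by $X_j$, and the bound $\|Tf\|_{H^1_\mathrm{per}} \leq C\eta^{1/2}\|f\|_{H^1_\mathrm{per}}$ follows by the same four-step argument (steps 1--4 in the proof of Lemma \ref{lem:Tf_L2}).

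Once the $H^1_\mathrm{per}$-boundedness of $\mathrm{Id}+T$ is in hand, Theorem \ref{theo:FourierPsi_pot} gives
\[
|\widehat{\tilde\psi}_m|^2 \;\leq\; C\left( \frac{\eta^{2(2N\wedge 5)}}{m^4} + \frac{1}{\eta^{2d-2}m^{2d+2}} \right),
\qquad m \geq \tfrac{1}{\eta},
\]
and summing $(1+m^2)|\widehat{\tilde\psi}_m|^2$ for $m \geq M+1$ with $M \geq 1/\eta$ yields
\[
\|\Pi_M\tilde\psi - \tilde\psi\|_{H^1_\mathrm{per}}^2
\leq C\left( \frac{\eta^{4N\wedge 10}}{M} + \frac{1}{\eta^{2d-2}M^{2d-1}} \right),
\]
which is exactly \eqref{eq:eig_pert}.

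The main obstacle, as anticipated, is verifying that the quantitative control of $T$ (Lemmas \ref{lem:Tf_L2}--\ref{lem:normeT}) survives the replacement of the pure double Dirac Hamiltonian by the perturbed one. The delicate point is that the vectors $X_j$ are no longer eigenvectors of a diagonal matrix $\cE$, but only triangular perturbations thereof; one has to check that the key cancellation $e_0^T\mathcal{P}=e_0^T$ and the estimate $\|g_0^T C_2\|=\cO(\eta)$ used in step~4 of the proof of Lemma \ref{lem:Tf_L2} depend only on the filtration property \eqref{eq:X_j}, not on any eigenvalue/eigenvector structure. Once this book-keeping is carried out, the remainder of the argument is a direct transcription of the proof of Theorem \ref{theo:energies}.
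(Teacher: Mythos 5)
Your proposal is correct and follows exactly the route the paper intends: the paper gives no separate proof of Theorem \ref{theo:energies_pot}, stating only that it follows ``the same steps as in Section \ref{sec:proof}'', i.e.\ Proposition \ref{prop:eig_error} applied to $(\mathrm{Id}+T)\Pi_M\tilde\psi$, the uniform $H^1_\mathrm{per}$-bound on $T$ (Lemma \ref{lem:normeT}), and the tail sum of Theorem \ref{theo:FourierPsi_pot}. Your additional observation that the structure Lemmas \ref{lem:C_eta} and \ref{lem:stab} survive with $\cE^j\Phi(0)$ replaced by the vectors $X_j$ of Lemma \ref{lem:struct_Phi}, thanks to the triangular relation \eqref{eq:X_j}, is precisely the book-keeping the paper leaves implicit.
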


\begin{note}
The estimate \eqref{eq:eig_pert} does not seem optimal as shown in Figure \ref{fig:continu}. It seems that singularities of any order can be removed by the VPAW method. 
\end{note}

\section{Numerical tests}
\label{sec:numerics}

The goal of this section is to compare the theoretical estimates determined in Sections \ref{sec:results} and \ref{sec:proof} to numerical simulations and show that they are optimal. 

All numerical simulations are carried out with $Z_0 = Z_a =10$ and $a=0.4$. The Fourier coefficients are evaluated by a very accurate numerical integration. 

It is interesting to compare the results (Figure \ref{grph:PAW}) obtained by a direct expansion of the wave function $\psi$ (here displayed by the points $N=0$) and the VPAW method. Recall that $N$ is the number of pseudo wave functions used to build the operator $T$. The smoothness of the pseudo wave functions is set to $d=N$. 

\begin{figure}[H]
\centering
	\begin{subfigure}[b]{0.45\textwidth}
	\includegraphics[width = \textwidth]{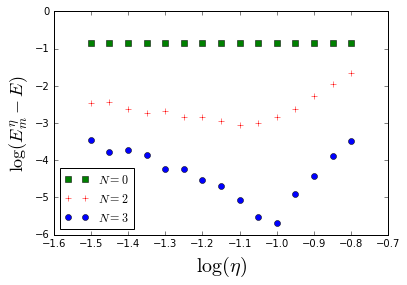}
	\caption{Error on the $8$-th eigenvalue for $M =64$ basis functions}
	\end{subfigure}
	\quad
	\begin{subfigure}[b]{0.45\textwidth}
	\includegraphics[width = \textwidth]{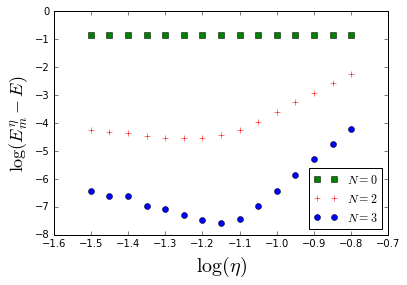}	
	\caption{Error on the $8$-th eigenvalue for $M=256$ basis functions}
	\end{subfigure}
	
\caption{The VPAW method compared to a direct calculation for the 8-th eigenvalue}
\label{grph:PAW}
\end{figure}

Given a number $M$ of basis functions, the VPAW method is much more accurate than the direct method although it is quite sensitive to the choice of $\eta$. More comments on this behavior will be made in Section \ref{sec:eta-dep}. We do not report the computing times for the VPAW method because in this study, each time a simulation is run, we generate all the pseudo wave functions $\tilde{\phi}$, the projector functions $\tilde{p}$ and compute their Fourier coefficients. In practice, these data are precomputed and stored in a file. Thus, the only additional cost compared to the direct method comes from the assembly of the matrices $(\mathrm{Id}+T)^* H (\mathrm{Id}+T)$ and $(\mathrm{Id}+T)^* (\mathrm{Id}+T)$.

\subsection{Derivative jumps}

Since $\psi$ and the functions $\phi_i$ are known analytically, it is possible to evaluate the derivative jumps of $\tilde{\psi}$ at $0$ and $\pm \eta$ (Figures \ref{grph:der_jump} and \ref{tab:der_jump}). The plots are given for the eigenfunction associated to the lowest eigenvalue of $H$. The behavior is similar for other eigenfunctions. 

\begin{figure}[H]
\centering
	\begin{subfigure}[b]{0.45\textwidth}
	\includegraphics[width = \textwidth]{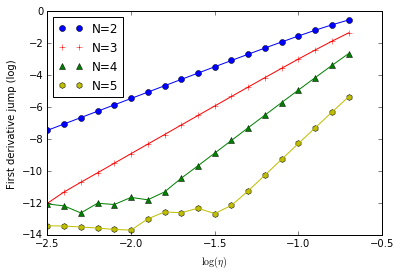}
	\caption{First derivative jump at $0$ as a function of $\eta$ in log-log scale \linebreak ($d=N$).}
	\end{subfigure}
	\quad
	\begin{subfigure}[b]{0.45\textwidth}
	\includegraphics[width = \textwidth]{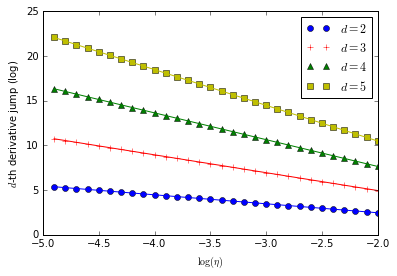}	
	\caption{$d$-th derivative jump at $\pm \eta$ as a function of $\eta$ in log-log scale \linebreak ($N=2$).}
	\end{subfigure}
	
\caption{Derivative jumps of the pseudo wave function $\tilde{\psi}$}
\label{grph:der_jump}
\end{figure}

\begin{figure}[H]
	\begin{subfigure}[b]{0.45\textwidth}
	\centering
	\begin{tabular}{|c|c|c|}
	\hline
	$N$ & Numerics & Theory \\
	\hline
	$2$ & 3.90 & 4 \\
	\hline
	$3$ & 5.94 & 6 \\
	\hline
	$4$ & 7.85 & 8 \\
	\hline
	$5$ & 9.85 & 10 \\
	\hline
	\end{tabular}
	\caption{Numerical and theoretical slopes for the first derivative jump at 0.}
	\end{subfigure}
	\quad
	\begin{subfigure}[b]{0.45\textwidth}
	\centering
	\begin{tabular}{|c|c|c|}
	\hline
	$d$ & Numerics & Theory \\
	\hline
	$2$ & -1.005 & -1 \\
	\hline
	$3$ & -2.000 & -2 \\
	\hline
	$4$ & -3.000 & -3 \\
	\hline	
	$5$ & -4.000 & -4 \\
	\hline
	\end{tabular}
	\caption{Numerical and theoretical slopes for \linebreak the $d$-th~derivative jump at $\pm \eta$.}
	\end{subfigure}
\caption{Comparison of the theoretical and numerical results for the derivative jumps}
\label{tab:der_jump}
\end{figure}

These numerical results are in remarkable agreement with Propositions \ref{lem:der_jump_0} and \ref{lem:der_jump_eta}.

\subsection{Comparison of the PAW and VPAW methods in pre-asymptotic regime}

The simulations are run for a fixed value of $d =6$ and two different values of $\eta$ ($\eta= 0.1$ and $\eta = 0.2$). In Figure \ref{grph:PAW-VPAW}, $E$ is the lowest eigenvalue of the 1D-Schr\"odinger operator $H$ given by \eqref{eq:H_mol}. 

Recall that our theoretical estimate on the eigenvalue given by the VPAW method is :
\begin{equation}
\label{eq:rappel}
|E_M^\eta - E| \leq C \left( \frac{\eta^{4N}}{M} + \frac{1}{\eta^{2d-2}} \frac{1}{M^{2d-1}} \right).
\end{equation}

To transpose the PAW method to our one-dimensional setting, we need to account for the use of a
pseudo-potential. For this purpose, we replace the Dirac delta potential by some smooth function in Equation \eqref{eq:H_mol}. We choose the 1-periodic function $\chi_\epsilon$ such that
\begin{equation*}
\chi_\epsilon (x) = \begin{cases} \frac{C}{\epsilon} \exp \left(-\frac{1}{1-\left(\frac{x}{\epsilon}\right)^2} \right), &  x \in [-\epsilon,\epsilon], \\
0 , &  x \in [-1/2,1/2] \setminus [-\epsilon, \epsilon]. \end{cases}
\end{equation*}
where $C$ ensures that $\int_{-\epsilon}^\epsilon \chi_\epsilon = 1$. As $\epsilon$ goes to 0, $\chi_\epsilon$
converges to the 1-periodized Dirac potential in $H^{-1}_\mathrm{per}(0,1)$. 

As expected, the PAW method quickly converges to a wrong value of $E$. It is interesting to notice that asymptotically, the VPAW convergence is of order $\mathcal{O}\left( \frac{1}{M} \right)$  but for small enough values of $M$ and $\eta$, the second term in the RHS of \eqref{eq:rappel} dominates.

\begin{figure}[H]
\centering
	\begin{subfigure}[b]{0.9\textwidth}
	\includegraphics[width=\textwidth]{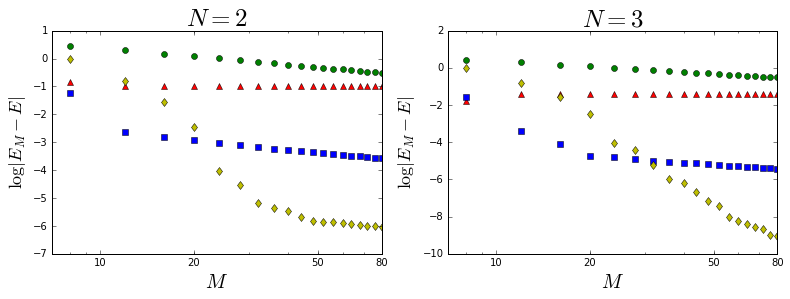}
	\end{subfigure}
	
	\begin{subfigure}[b]{0.7\textwidth}
	\includegraphics[width=\textwidth]{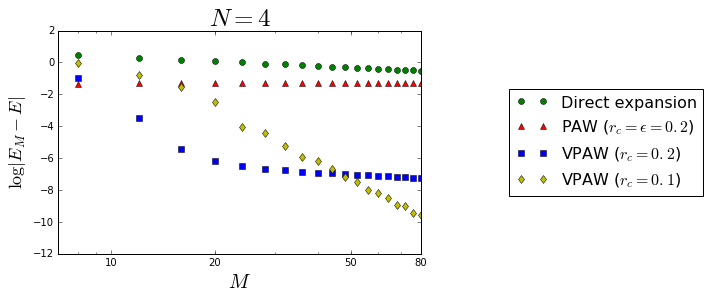}
	\end{subfigure}
\caption{Comparison between the PAW and VPAW methods for the lowest eigenvalue}
\label{grph:PAW-VPAW}
\end{figure}

\subsection{Asymptotic regime}
\label{sec:eta-dep}

\subsubsection{Behavior in the plane-wave cut-off $M$}

The next numerical tests (Figures \ref{grph:M} and \ref{tab:conv_M}) are run with $d=N$ and $N=2$, $N=3$. The pseudo wave function $\tilde{\psi}$ is expanded in $M=2^m$ plane-waves, $m=7$ to $9$. 

\begin{figure}[!h]
\centering
	\begin{subfigure}[b]{0.45\textwidth}
	\includegraphics[width = \textwidth]{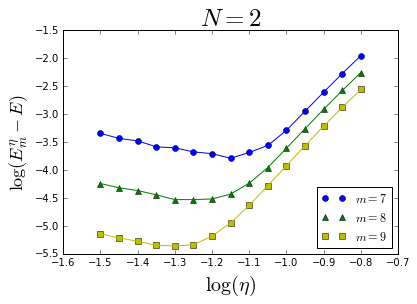}
	\caption{Error on the $9$-th eigenvalue with $N=2$}
	\end{subfigure}
	\quad
	\begin{subfigure}[b]{0.45\textwidth}
	\includegraphics[width = \textwidth]{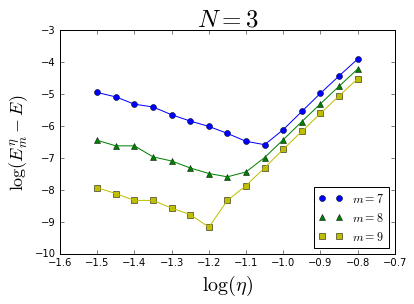}	
	\caption{Error on the $9$-th eigenvalue with $N=3$}
	\end{subfigure}
	
\caption{Error on the eigenvalue for different values of $M$}
\label{grph:M}
\end{figure}

Here, we can clearly see two regimes : for $\eta$ small (resp. $\eta$ large), the leading term in the error is dominated by the $d$-th derivative jumps at $k \pm \eta$ and $k + a \pm \eta$, $k \in \Z$ (resp. the first derivative jump at $k$ and $k + a$, $k \in \Z$). In each regime, the gaps between the decreasing and increasing slopes seem constant and their evaluation gives the correct orders of convergence in $M$ (see Figure \ref{tab:conv_M}).

\begin{figure}[!h]
	\begin{subfigure}[b]{0.45\textwidth}
	\centering
	\begin{tabular}{|c|c|c|}
	\hline
	 & Numerics & Theory \\
	\hline
	{\small Decreasing lines} & 0.30 & $\log(2) \simeq 0.30 $ \\ 
	\hline
	{\small Increasing lines} & 0.90 & $3 \log(2) \simeq 0.90 $ \\ 
	\hline
	\end{tabular}
	\caption{Gaps for $N=2$}
	\end{subfigure}
	\quad
	\begin{subfigure}[b]{0.45\textwidth}
	\centering
	\begin{tabular}{|c|c|c|}
	\hline
		 & Numerics & Theory \\
	\hline
	{\small Decreasing lines} & 0.32 & $\log(2) \simeq 0.30 $ \\ 
	\hline
	{\small Increasing lines} & 1.50 & $5 \log(2) \simeq 1.50 $ \\ 
	\hline
	\end{tabular}
	\caption{Gaps for $N=3$}
	\end{subfigure}
\caption{Estimation of the order of convergence in $M$}
\label{tab:conv_M}
\end{figure}

\subsubsection{Dependence of the convergence rate in $\eta$ on $N$ and $d$}

In each graph of Figure \ref{grph:d}, we have kept $M$ constant to track the dependence of the convergence rate in $\eta$. By Theorem \ref{theo:energies}, the logarithm of error on the eigenvalue is given by
$$
\log (E_M^\eta - E) = \log(C) + \log \left( \frac{\eta^{4N}}{M} + \frac{1}{\eta^{2d-2}M^{2d-1}} \right).
$$
Hence, when $\eta$ is large, we have
$$
\log (E_M^\eta - E) \simeq \log(C) + 4N \log \eta - \log (M),
$$
and when $\eta$ is small, we have
$$
\log (E_M^\eta - E) \simeq \log(C) - (2d -2) \log \eta - (2d-1) \log M.
$$

\begin{figure}[!h]
\centering
	\begin{subfigure}[b]{0.45\textwidth}
	\includegraphics[width = \textwidth]{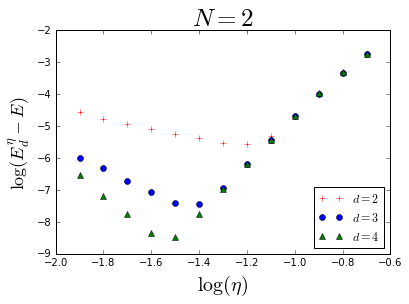}
	\caption{Error on the $8$-th eigenvalue with $N=2$, $M=256$}
	\end{subfigure}
	\quad
	\begin{subfigure}[b]{0.45\textwidth}
	\includegraphics[width = \textwidth]{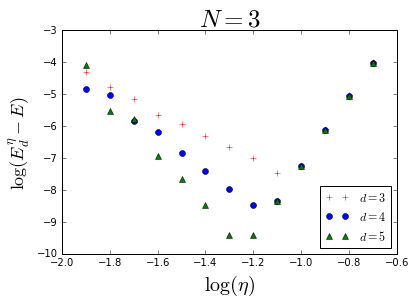}	
	\caption{Error on the $8$-th eigenvalue with $N=3$, $M=128$}
	\end{subfigure}
	
\caption{Error on the eigenvalue for different values of $d$}
\label{grph:d}
\end{figure}

Notice that in each graph, for $\eta$ large, the parameter $d$ has a negligible effect on the error on the eigenvalues, in agreement with our theoretical estimates.

\begin{figure}[!!h]
	\begin{subfigure}[b]{0.45\textwidth}
	\centering
	\begin{tabular}{|c|c|c|}
	\hline
	$d$ & Numerics & Theory \\
	\hline
	$2$ & 6.5 & 8 \\
	\hline
	$3$ & 6.9 & 8 \\
	\hline
	$4$ & 7.2 & 8 \\
	\hline
	\end{tabular}
	\caption{$N=2$}
	\end{subfigure}
	\quad
	\begin{subfigure}[b]{0.45\textwidth}
	\centering
	\begin{tabular}{|c|c|c|}
	\hline
	$d$ & Numerics & Theory \\
	\hline
	$3$ & 10.6 & 12 \\
	\hline
	$4$ & 10.7 & 12 \\
	\hline	
	$5$ & 10.9 & 12 \\
	\hline
	\end{tabular}
	\caption{$N=3$}
	\end{subfigure}
\caption{Estimation of the increasing slopes in Figure \ref{grph:d}}
\end{figure}

There is a small discrepancy between the theoretical and numerical values of the increasing slope. A possible explanation could be that the estimates we have given for the first derivative jumps are valid asymptotically as $\eta$ goes to 0, but the increasing slopes are observed for relatively large values of $\eta$. 

\begin{figure}[!!h]
	\begin{subfigure}[b]{0.45\textwidth}
	\centering
	\begin{tabular}{|c|c|c|}
	\hline
	$d$ & Numerics & Theory \\
	\hline
	$2$ & -1.6 & -2 \\
	\hline
	$3$ & -3.6 & -4 \\
	\hline
	$4$ & -6.0 & -6 \\
	\hline
	\end{tabular}
	\caption{$N=2$}
	\end{subfigure}
	\quad
	\begin{subfigure}[b]{0.45\textwidth}
	\centering
	\begin{tabular}{|c|c|c|}
	\hline
	$d$ & Numerics & Theory \\
	\hline
	$3$ & -3.8 & -4 \\
	\hline
	$4$ & -5.4 & -6 \\
	\hline	
	$5$ & -7.8 & -8 \\
	\hline
	\end{tabular}
	\caption{$N=3$}
	\end{subfigure}
\caption{Estimation of the decreasing slopes in Figure \ref{grph:d}}
\end{figure}

For the decreasing slopes, our estimate is in very good agreement with the numerical simulations. 

\subsection{Perturbation by a continuous potential}
\label{subsec:perturbation}

In this subsection, we study the VPAW method applied to the Hamiltonian \eqref{eq:new_H} with \linebreak $W(x) = 10\sin(2\pi x + 0.2)$. Since this model is not exactly solvable, we use a P2 finite elements method to compute very accurately the eigenvalues (the relative error on the computed eigenvalue is less than $10^{-10}$). 

\begin{figure}[!h]
\centering
\includegraphics[width = 0.6\textwidth]{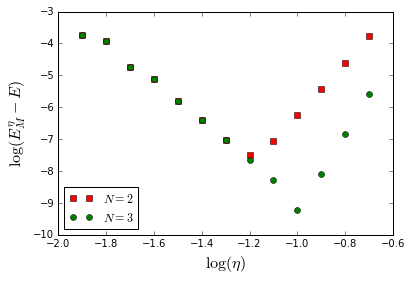}
\caption{Error on the first eigenvalue as a function of $\eta$ ($M=128, d=4$)}
\label{fig:continu}
\end{figure}

\begin{figure}[!!h]
	\begin{subfigure}[b]{0.45\textwidth}
	\centering
	\begin{tabular}{|c|c|c|}
	\hline
	$N$ & Numerics & Theory \\
	\hline
	$2$ & 8.2 & 8 \\
	\hline
	$3$ & 12 & 10 \\
	\hline
	\end{tabular}
	\caption{Increasing slopes}
	\end{subfigure}
	\quad
	\begin{subfigure}[b]{0.45\textwidth}
	\centering
	\begin{tabular}{|c|c|c|}
	\hline
	 & Numerics & Theory \\
	\hline
	$N=2$ and $N=3$ & -5.7 & -6 \\
	\hline	
	\end{tabular}
	\caption{Decreasing slopes}
	\end{subfigure}
\caption{Estimation of the slopes in Figure \ref{fig:continu}}
\end{figure}

For $N=3$, the increasing part of the curve has a slope which is very close to the theoretical estimation of Theorem \ref{theo:energies} (that is with $W=0$). This seems to indicate that the VPAW method removes the singularity at the nucleus up to the fifth order, but we are unable to support this observation with rigorous numerical analysis arguments.


\end{document}